\newtheorem{thm}{Theorem}[section]
\newtheorem{main}{Theorem}
\newtheorem{thma}[main]{Theorem}
\newtheorem{prop}[thm]{Proposition}
\newtheorem{cor}[thm]{Corollary}
\newtheorem{claim}[thm]{Claim}
\newcounter{openproblem}
\renewcommand{\theopenproblem}{\Alph{openproblem}}
\newcommand{\sR}{\mathscr R}
\newcommand{\sN}{\mathscr N}
\renewcommand{\phi}{\varphi}
\newcommand{\WR}{\mathcal{WR}}
\renewcommand{\P }{\mathcal P}
\newcommand{\M}{\mathcal M}
\newcommand{\Nn}{\mathcal N}
\newcommand{\ra}{\rightarrow}
\newcommand{\ca}{\curvearrowright}
\renewcommand{\L}{\mathcal L}
\newcommand{\Q}{\mathcal Q}
\newcommand{\R}{\mathcal R}
\newcommand{\T}{\mathcal T}
\newcommand{\W}{\mathcal W}
\newcommand{\sU}{\mathscr U}
\newcommand{\sZ}{\mathscr Z}
\theoremstyle{remark}
\theoremstyle{definition}
\newtheorem{defn}[thm]{Definition}
\newtheorem{rmk}[thm]{Remark}
\DeclareMathOperator{\tr}{tr}
\DeclareMathOperator{\Out}{Out}
\DeclareMathOperator{\Aut}{Aut}
\DeclareMathOperator{\Inn}{Inn}
\DeclareMathOperator{\Ad}{ \rm ad}
\DeclareMathOperator{\Id}{\rm id}
\numberwithin{equation}{section}
\title{Property (T) group factors whose Jones index set equals all positive integers}
\author{Ionu\c t Chifan and Junhwi Lim}
\date{}
\begin{document}

\maketitle

\begin{abstract}Using a mélange of techniques at the rich intersection of deformation/rigidity theory, finite index subfactor theory, and geometric group theory, we prove the existence of a continuum of property~(T) factors that are pairwise non–stably isomorphic and whose Jones index sets consist of all positive integers. These factors are realized as group von Neumann algebras $\L(G)$ associated with property~(T) generalized wreath-like product groups $G \in \mathscr{WR}(A, B \curvearrowright I)$ introduced in~\cite{CIOS23b}, where $A$ is abelian, $B$ is a non-parabolic subgroup of a relatively hyperbolic group with residually finite peripheral structure, and $B \curvearrowright I$ is a faithful action with infinite orbits. Integer index subfactors of $\L(G)$ are constructed from extensions of $G$. This result advances an open question of P.~de~la~Harpe~\cite{dlH95}.

\end{abstract}

\section{Introduction}

The theory of subfactors was initiated by Jones~\cite{Jon83}, who introduced the notion of the Jones index $[\mathcal{M}:\mathcal{N}]$ for an inclusion of $\rm{II}_1$ factors $\mathcal{N} \subseteq \mathcal{M}$. Defined as the Murray-von Neumann dimension of $L^2(\mathcal{M})$ as a left $\mathcal{N}$-module, the index measures the ``size" of $\mathcal{M}$ relative to $\mathcal{N}$. Jones's landmark result~\cite{Jon83} establishes that, for any fixed II$_1$ factor $\mathcal M$,  the collection $\mathscr{I}(\mathcal{M})$ of all finite Jones indices $[\mathcal{M}:\mathcal{N}]$ of subfactors $\mathcal{N} \subseteq \mathcal{M}$ is always contained in
\begin{equation}
    \left\{4\cos^2\left(\frac{\pi}{n}\right) \,|\, n \in \mathbb{N},\, n \ge 3\right\} \cup [4, \infty). \label{Jones indices}
\end{equation}
He also showed that all of these values are realized as Jones indices of subfactors of the hyperfinite $\mathrm{II}_1$ factor $\mathcal{R}$ ~\cite{Jon83}. In the same work, Jones asked which of these values can occur as Jones indices of irreducible subfactors of $\mathcal{R}$. This problem has proved to be extremely difficult and remains wide open.

\vskip 0.05in
For some $\rm{II}_1$ factors, their Jones index sets are identified. Using results from  \cite{IPP08}, Vaes constructed in \cite{Va09} a $\mathrm{II}_1$ factor $\mathcal{M}$ in which every finite-index subfactor $\mathcal{N} \subseteq \mathcal{M}$ is trivial; that is, every finite-index inclusion of factors $\mathcal{N} \subseteq \mathcal{M}$ is isomorphic to $\mathcal{N} \subseteq \mathcal{N} \otimes \mathbb{M}_n(\mathbb{C})$ for some $n \in \mathbb{N}$. As a result, the set $\mathscr{I}(\mathcal{M})$ of Jones indices of $\M$ is precisely $\mathbb{N}^2 = \{n^2 \,|\, n \in \mathbb{N}\}$, and the only finite index of an irreducible subfactor is $1$. 

\vskip 0.05in

The structure of the Jones index set remains largely mysterious for many natural classes of factors—one notable example being property (T) factors.
Answering a question from \cite{Jon83}, Popa proved in \cite{Pop86} via a general argument that for every property (T) II$_1$ factor $\mathcal{M}$, its Jones index set $\mathscr{I}(\mathcal{M})$ is countable. This, combined with the earlier results of Connes \cite{Co80} on the countability of symmetries of property (T) II$_1$ factors, provides further evidence that such factors are highly rigid objects, illustrating, for instance, a stark contrast with the case of amenable factors. Motivated by these initial results, P.\ de la Harpe proposed for study in \cite[Problem 5]{dlH95} the following problem: \emph{What are the possible values of the Jones index set $\mathscr I(\mathcal L(G))$ when $G$ is an ICC property (T) group?}

Although the problem has been around for three decades, significant progress was made only recently. In particular, it was shown in \cite{CIOS24, AMAKCK} that for broad classes of property~(T) wreath-like product groups and relatively hyperbolic groups $G$, the Jones index set $\mathscr{I}(\mathcal{L}(G))$ consists solely of integers. Motivated by these results, as well as recent advances on the Connes Rigidity Conjecture and its strong form by Popa \cite{CDHK24, CIOS23a, CIOS23b, CIOS24, AMAKCK}, it was conjectured in \cite{CIOS24} that $\mathscr{I}(\mathcal{L}(G)) \subseteq \mathbb{N}$ holds universally. Nevertheless, no \emph{explicit} computation of $\mathscr{I}(\mathcal{L}(G))$ is known for any ICC property~(T) group $G$. The primary goal of this paper is to present the first such computation.

\subsection{Main results}

To properly introduce our results we recall the concept of wreath-like product group introduced in \cite{CIOS23a}. Let $A$, $B$ be countable groups and let $B\curvearrowright I$ be an action on a countable set $I$. A group $G$ is a \emph{wreath-like product} of groups $A$ and $B\curvearrowright I$, written as $G\in\W\R(A,B\curvearrowright I)$, if $G$ is an extension of the form
\begin{equation}\label{ext}
1\rightarrow \bigoplus_{i\in I}A_i \hookrightarrow  G \stackrel{\kappa}\twoheadrightarrow B\rightarrow 1,
\end{equation}
where $A_i\cong A$ and the action of $G$ on $A^{(I)}=\bigoplus_{i\in I}A_i$ by conjugation permutes the summands as  follows
$$gA_ig^{-1} = A_{\kappa(g)i}\;\;\; \text{ for all } g\in G,  i\in I.$$  

In \cite{CIOS23a}, a natural quotienting procedure was introduced in the context of group-theoretic Dehn filling, producing many examples of wreath-like product groups, including numerous groups with property (T). Some classes of these groups provided the first known examples of property (T) groups that are entirely reconstructible from their von Neumann algebras; in particular, they satisfy the Connes Rigidity Conjecture \cite{Con82}. Moreover, in \cite[Theorem 7.5]{CIOS23b}, for an even broader class of wreath-like product groups
$G$ and $H$ with abelian base, it was completely described all isomorphisms between $\L(G)$ and any amplification $\L(H)^t$ with $t>0$ solely
 in terms of isomorphisms between the underlying groups $G$ and $H$, together with multiplicative characters on $G$. As a result, all such groups give a positive answer to Jones’ Outer Automorphisms Conjecture from \cite[Problem 8]{Jon00}. 
 
 In this paper we establish the following nontrivial generalization of this result to all virtual $\ast$-isomorphisms.  Recall that two $\mathrm{II}_1$ factors $\mathcal{N}$ and $\mathcal{M}$ are called \emph{virtually isomorphic} if there exists $t > 0$ and an injective $\ast$-homomorphism 
$
\Theta : \mathcal{N} \to \mathcal{M}^t
$
with finite index image, i.e., $[\mathcal{M}^t : \Theta(\mathcal{N})] < \infty$. The map $\Theta$ is called a \emph{virtual $\ast$-isomorphism}.

\begin{thma}\label{vsymmetries1}
Let $A,C$ be non-trivial abelian groups. Let $B,D$ be nonparabolic ICC subgroups of finitely generated groups which are hyperbolic relative to a finite family of residually finite groups. 

\noindent Let $G\in\mathcal W\mathcal R(A,B \curvearrowright I)$ and $H\in\mathcal W\mathcal R(C,D\curvearrowright J)$ be property (T) groups, where $B \curvearrowright I$ and $D\curvearrowright J$ are faithful actions with infinite orbits.

\noindent Let $t>0$ be a scalar and let $\Theta:{\mathcal L}(G)\hookrightarrow {\mathcal L}(H)^t$ be any virtual $*$-isomorphism.

\noindent Then $t\in\mathbb N$, and 
we can find $m\in\mathbb N$ and for every $1\leq i\leq m$, a finite index subgroup $K_i\leqslant G$, an injective homomorphism $\gamma_i:K_i\rightarrow H$ with finite index image and a unitary representation $\rho_i:K_i\rightarrow\mathscr U_{s_i}(\mathbb C)$, for some $s_i\in\mathbb N$,  and a unitary $w\in\mathcal{L}(H)^t=\mathcal{L}(H)\otimes\mathbb M_t(\mathbb C)$  such that $\sum_{i=1}^m[G:K_i]s_i=t$ and
$$\text{$w\Theta(u_g)w^*=\emph{diag}(\emph{Ind}_{K_1}^G(\pi_{\gamma_1,\rho_1})(g),\ldots,\emph{Ind}_{K_m}^G(\pi_{\gamma_m,\rho_m})(g))$, for every $g\in G$.}$$

\noindent Here, $(u_g)_{g\in G}\subset \L(G)$ and $(v_h)_{h\in H}\subset \L(H)$ are the canonical unitaries, $\pi_{\gamma_i,\rho_i }(g)= v_{\gamma_i(g)}\otimes \rho_i(g)$ for all $g\in K_i$ and ${\rm Ind}^G_{K_i}(\pi_{\gamma_i, \rho_i}): G \ra \mathscr U(\L(K_i) \otimes  \mathbb M_{s_i[G:K_i]}(\mathbb C))$ is the canonical induced representation. 

\noindent Moreover, we have the following formula for the index of the image of $\Theta$

$$
[\mathcal{L}(H)^t:\Theta(\mathcal{L}(G))]=t\sum_{i=1}^m s_i[H:\gamma_i(K_i)].
$$

\end{thma}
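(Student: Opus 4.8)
The plan is to deduce Theorem~\ref{vsymmetries1} from the already‐established classification of genuine $\ast$-isomorphisms $\L(G)\cong\L(H)^t$ in \cite[Theorem 7.5]{CIOS23b} by a finite‐index ``doubling'' argument. Given a virtual $\ast$-isomorphism $\Theta:\L(G)\hookrightarrow\L(H)^t$ with $[\L(H)^t:\Theta(\L(G))]=d<\infty$, the first step is to pass to the basic construction. Since $G$ has property~(T), so does $\L(G)$, and by Popa's theorem \cite{Pop86} the index $d$ together with $t$ lies in a countable set; more importantly, the Jones tower for $\Theta(\L(G))\subseteq\L(H)^t$ is again a tower of property~(T) factors. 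The key structural input is that the relative commutant and higher relative commutants are finite-dimensional here: $\L(G)$ and $\L(H)$ are factors coming from ICC groups with suitable malnormality/rigidity so that any finite-index inclusion between corners of $\L(G)$ and $\L(H)$ is ``rigid'' — this is exactly the regime where \cite[Theorem 7.5]{CIOS23b} applies after one restricts attention to a minimal projection. Concretely, I would decompose $\L^2(\L(H)^t)$ as a finite direct sum of irreducible $\L(G)$-$\L(H)^t$ bimodules, and each irreducible summand, being finite-index on both sides, is a ``standard'' bimodule coming from a commensuration of the groups.

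The second step is the group-theoretic extraction. After cutting by a suitable projection and using the structure of wreath-like products (so that the abelian base $A^{(I)}$ is detected inside $\L(G)$ via Popa's deformation/rigidity machinery — this is where the property~(T) of $G$ and the non-parabolic, relatively-hyperbolic nature of $B$ enter, via the intertwining results of \cite{CIOS23a,CIOS23b}), one shows that each irreducible bimodule summand is of the form $\L^2(\L(H))\otimes(\text{fin.\ dim.})$ as a $K_i$-$H$ bimodule for a finite-index subgroup $K_i\leqslant G$, induced up to $G$. This yields the homomorphisms $\gamma_i:K_i\to H$ with finite-index image and the finite-dimensional corrections $\rho_i$, and the displayed formula $w\Theta(u_g)w^*=\mathrm{diag}(\mathrm{Ind}_{K_1}^G(\pi_{\gamma_1,\rho_1})(g),\ldots)$ is then just the concrete description of $\Theta$ on the canonical unitaries in terms of these induced representations; the constraint $\sum_i[G:K_i]s_i=t$ is the matching of Murray-von Neumann dimensions.

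The third step is bookkeeping for the index formula. Once $\Theta$ is written in the induced-representation normal form, the image $\Theta(\L(G))$ inside $\L(H)\otimes\mathbb M_t(\mathbb C)$ is, up to the unitary $w$, the von Neumann algebra generated by the block-diagonal induced representations. The index $[\L(H)^t:\Theta(\L(G))]$ is then computed block by block: the $i$-th block contributes $s_i[H:\gamma_i(K_i)]$ (the index of $\L(K_i)^{s_i}\subseteq\L(H)^{s_i}$ via $\gamma_i$, amplified and then induced), and summing over the blocks inside $\L(H)\otimes\mathbb M_t(\mathbb C)$ introduces an overall factor of $t$, giving $[\L(H)^t:\Theta(\L(G))]=t\sum_{i=1}^m s_i[H:\gamma_i(K_i)]$. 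That $t\in\mathbb N$ is then forced: the amplification $\L(H)^t$ must admit a genuine (non-virtual) copy of a finite-index extension, and the fundamental group of $\L(H)$ is trivial by \cite[Theorem 7.5]{CIOS23b}, so $t$ can only be an integer realized as $\sum_i[G:K_i]s_i$.

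The main obstacle I expect is the first step: upgrading an abstract finite-index $\ast$-homomorphism $\Theta$ into a direct sum of \emph{standard} bimodules indexed by commensurations of $G$ and $H$. This requires showing that every finite-index $\L(G)$-$\L(H)^t$ bimodule has finite-dimensional ``symmetry'' and is induced from a subgroup — essentially a relative version of the superrigidity in \cite{CIOS23b}, pushed from isomorphisms to finite-index embeddings. The technical heart is controlling the position of the abelian subalgebra $\L(A^{(I)})\subset\L(G)$ under $\Theta$ after cutting by a projection in the basic construction: one must show it can be conjugated, inside $\L(H)^t\rtimes$(finite data), into a conjugate of $\L(C^{(J)})$, which uses the spectral-gap rigidity afforded by property~(T) together with the relative-hyperbolicity input that makes the peripheral/base structure rigid. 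Everything downstream — the normal form and the index formula — is then essentially a dimension-counting and induction computation.
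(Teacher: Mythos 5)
Your outline correctly identifies the shape of the final answer and even locates the difficulty in the right place, but the one concrete mechanism you propose for overcoming it does not work, and you explicitly leave the technical heart as an ``obstacle I expect'' rather than resolving it. The claimed reduction to the isomorphism classification of \cite[Theorem 7.5]{CIOS23b} via the basic construction fails: a finite-index embedding $\Theta(\mathcal L(G))\subseteq\mathcal L(H)^t$ is not an isomorphism onto a corner of $\mathcal L(H)^t$, so ``restricting attention to a minimal projection'' never puts you in the hypotheses of that theorem, and iterating the basic construction only produces further finite-index inclusions of the same kind. Likewise, decomposing $L^2(\mathcal L(H)^t)$ into finitely many irreducible $\mathcal L(G)$-$\mathcal L(H)^t$ bimodules is easy, but the assertion that each irreducible finite-index bimodule is \emph{standard} (induced from a commensuration) is precisely equivalent to the theorem; it is not a consequence of the isomorphism case. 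Your appeal to triviality of the fundamental group to force $t\in\mathbb N$ is also circular, since it presupposes the normal form $\sum_i[G:K_i]s_i=t$ that you are trying to establish.

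What the paper actually does is follow the \emph{embedding} theorem \cite[Theorem 5.6]{CIOS24} rather than the isomorphism theorem, and the new work is exactly the finite-index analysis you defer. Concretely: one first shows $\Theta(\mathcal P)\prec^{\rm s}\mathcal L(T)\otimes\mathbb M_{n_t}(\mathbb C)$ and then $\Theta(\mathcal P)\prec^{\rm s}\mathcal Q\otimes\mathbb D_{n_t}(\mathbb C)$ (Claims \ref{theta(P)'} and \ref{theta(P)''}), where the key difficulty — absent in the isomorphism setting — is that the normalizer $\mathscr N_{p\mathscr Np}(\Theta(\mathcal P))''$ is now only a finite-index subfactor of $p\mathscr Np$ and its relative commutant $\mathscr R=\Theta(\mathcal P)'\cap p\mathscr Np$ is no longer $\Theta(\mathcal P)$ itself. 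Controlling $\mathscr R$ requires the Pimsner--Popa basis and type I machinery of Propositions \ref{essfinindex}, \ref{typeI} and \ref{finiteindexnorm}, which have no analogue in your sketch. One then conjugates $\Theta(\mathcal P)$ into the Cartan $\widetilde{\mathcal Q}p$, invokes the orbit-equivalence and cocycle superrigidity results of \cite{CIOS23a,CIOS24} (this is where $t\in\mathbb N$ actually comes from: the pieces $X_i$ of the partition of $X_0$ each have measure one), proves the finite-index property of the resulting group homomorphisms via a bimodule-finiteness argument (Claim \ref{finindex2}), and untwists the cocycles to reach the normal form on a finite-index subgroup $K$; Theorem \ref{explicit} then upgrades this to all of $G$ via the intertwiner analysis of Claim \ref{intertwiners} and computes the index by the local index formula. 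Your step three (the index bookkeeping) is essentially right in spirit, but everything upstream of it remains unproven in your proposal.
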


This theorem should also be compared with the recent embedding result obtained in \cite[Theorem 5.6]{CIOS24}. We note that, although our setting is more restrictive than the general embedding framework considered there, our result applies to a much broader class of wreath-like product groups. However, our proof still follows very closely the general methods introduced in \cite[Theorem 5.6]{CIOS24}, but is complemented by an analysis involving finite index techniques.

By iteratively applying the basic construction \cite{Chr79,Jon83}, we observe that any subfactor $\mathcal{N} \subseteq \L(G)$ of finite Jones index $[\L(G) : \mathcal{N}] = t > 0$ gives rise to a $\ast$-embedding $\L(G) \hookrightarrow \L(G)^t$ whose image has finite index. In particular, the previous theorem implies that for every property (T) wreath-like product group $G$ as in the hypothesis, the Jones index set satisfies $\mathscr{I}(\L(G)) \subseteq \mathbb{N}$. Moreover, we show that for some of these groups, \emph{every} positive integer $t \in \mathbb{N}$ can be realized as the Jones index of an irreducible finite index subfactor of $\L(G)$. 
The key insight is that such subfactors can be constructed from finite index extensions of the group $G$. Extensions $H$ are chosen that $\L(G)\subset \L(H)$ are irreducible subfactors. The irreducibility is shown by expressing the von Neumann algebras $\L(H)$ as cocycle crossed products and applying \cite[Theorem 6.1]{Sut80B}. Applying the downward basic construction in \cite{Jon83, PP86} to $\L(G)\subset \L(H)$ yields irreducible subfactors of $\L(G)$ with index $[H:G]$.

\begin{thma}
     There is a continuum of ICC property (T) wreath-like product groups $(G_j)_{j\in J}$ as in the statement of Theorem \ref{vsymmetries1}, such that their II$_1$ factors $\mathcal L(G_j)$ are pairwise not stably isomorphic, and $\mathscr{I}(\mathcal L(G_j))=\mathbb{N}$. Moreover, all values in $\mathscr{I}(\mathcal L(G_j))$ can be realized as the Jones indices of irreducible subfactors of $\mathcal L(G_j)$.
\end{thma}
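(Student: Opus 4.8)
The plan is to split the statement into three tasks: (1) produce a suitable continuum of groups $G_j$ satisfying the hypotheses of Theorem~\ref{vsymmetries1}; (2) show that $\mathscr I(\mathcal L(G_j))=\mathbb N$ for each of these groups; and (3) arrange that the members of the continuum are pairwise non–stably isomorphic. For (1) I would invoke the group–theoretic Dehn filling machinery of \cite{CIOS23a,CIOS23b}: start with a fixed non-parabolic ICC subgroup $B$ of a finitely generated group hyperbolic relative to a residually finite peripheral structure, pick a faithful action $B\curvearrowright I$ with infinite orbits, and take $A=\mathbb Z/p\mathbb Z$ (or any non-trivial abelian group). The Dehn filling quotients supply property~(T) wreath-like products $G\in\mathscr{WR}(A,B\curvearrowright I)$, and, by varying the filling data over an appropriate infinite family, one obtains a continuum of pairwise non-isomorphic such $G$; this is where I expect to mostly quote \cite{CIOS23a,CIOS23b} rather than reprove anything.

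For (2) the inclusion $\mathscr I(\mathcal L(G_j))\subseteq\mathbb N$ is immediate from Theorem~\ref{vsymmetries1}: any finite-index subfactor $\mathcal N\subseteq\mathcal L(G_j)$ yields, via iterated basic construction, a virtual $\ast$-isomorphism $\mathcal L(G_j)\hookrightarrow\mathcal L(G_j)^t$ with $t=[\mathcal L(G_j):\mathcal N]$, forcing $t\in\mathbb N$. The substantive direction is realizing \emph{every} positive integer. Here I would construct, for each $n\in\mathbb N$, a finite-index extension $1\to G_j\to H_n\to F_n\to 1$ with $[H_n:G_j]=n$, chosen compatibly with the wreath-like structure so that $H_n$ still lies in $\mathscr{WR}(A,B'\curvearrowright I')$ for a suitable finite-index overgroup $B'\leqslant B$ and still has property~(T). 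The key point is that $\mathcal L(G_j)\subseteq\mathcal L(H_n)$ must be an \emph{irreducible} subfactor: following the strategy indicated in the introduction, I would realize $\mathcal L(H_n)$ as a cocycle crossed product $\mathcal L(G_j)\rtimes_\sigma F_n$ (twisted by a $2$-cocycle coming from the extension) and apply the criterion of Sutherland \cite[Theorem~6.1]{Sut80B}, which reduces irreducibility of the crossed product over $\mathcal L(G_j)$ to properly outer-ness of the action of $F_n$ — and this in turn follows because $G_j$ is ICC and no nontrivial element of $H_n$ acts by an inner automorphism on $\mathcal L(G_j)$ (using that $G_j$ has trivial centralizer in $H_n$, which can be arranged from the wreath-like structure and the ICC property of $B$). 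Once $\mathcal L(G_j)\subseteq\mathcal L(H_n)$ is irreducible of index $n$, the downward basic construction of \cite{Jon83,PP86} produces an irreducible subfactor $\mathcal P\subseteq\mathcal L(G_j)$ with $[\mathcal L(G_j):\mathcal P]=n$, so $n\in\mathscr I(\mathcal L(G_j))$ and indeed is realized irreducibly.

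For (3), pairwise non–stable-isomorphism, I would apply the rigidity part of Theorem~\ref{vsymmetries1}: if $\mathcal L(G_j)^t\cong\mathcal L(G_k)$ for some $t>0$ (i.e., stable isomorphism), then taking $\Theta$ to be such an isomorphism (a virtual $\ast$-isomorphism of index $1$) the formula $[\mathcal L(H)^t:\Theta(\mathcal L(G))]=t\sum_i s_i[H:\gamma_i(K_i)]=1$ forces $m=1$, $t=1$, $s_1=1$, $K_1=G_j$ and $\gamma_1:G_j\to G_k$ an isomorphism — so $\mathcal L(G_j)\cong\mathcal L(G_k)$ already implies $G_j\cong G_k$. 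Hence it suffices to have chosen in step~(1) a continuum of these Dehn filling quotients that are pairwise non-isomorphic as groups, which is exactly what the flexibility of the filling construction in \cite{CIOS23a,CIOS23b} provides (infinitely many, in fact continuum-many, independent filling parameters).

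I expect the main obstacle to be step~(2), specifically engineering the extensions $H_n$: one needs simultaneously (i) $[H_n:G_j]=n$ for \emph{every} $n$, (ii) $H_n$ again a property~(T) wreath-like product covered by Theorem~\ref{vsymmetries1} (so that no spurious non-integer indices appear), and (iii) the crossed-product/cocycle presentation making Sutherland's irreducibility criterion applicable. Managing the $2$-cocycle twist — ensuring the cocycle crossed product $\mathcal L(G_j)\rtimes_\sigma F_n$ really is $\mathcal L(H_n)$ and not merely stably isomorphic to it, and checking proper outer-ness in the twisted setting — is the delicate technical heart, and is where the detailed group-theoretic input about centralizers of $G_j$ inside $H_n$ will be needed.
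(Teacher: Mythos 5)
Your overall architecture matches the paper's: the inclusion $\mathscr I(\mathcal L(G_j))\subseteq\mathbb N$ via the iterated basic construction plus Theorem \ref{vsymmetries1}, and the realization of each $n$ via an index-$n$ extension $H$ of $G_j$ with $\mathcal L(G_j)\subseteq\mathcal L(H)$ irreducible (Sutherland's theorem applied to the cocycle crossed product), followed by the downward basic construction. However, there is a genuine gap at the heart of your step (2): you never explain where the extensions $1\to G_j\to H_n\to F_n\to 1$ come from, nor why the resulting action of $F_n$ is properly outer on $\mathcal L(G_j)$. Since $G_j$ is centerless, such an extension is equivalent to a homomorphism $F_n\to\Out(G_j)$, and for $\mathcal L(G_j)\subseteq\mathcal L(H_n)$ to be irreducible (rather than, say, $\mathcal L(G_j)\,\overline{\otimes}\,\mathcal L(F_n)$) this homomorphism must be injective and, crucially, must remain injective after composing with the canonical map $\Out(G_j)\to\Out(\mathcal L(G_j))$. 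Neither condition is automatic: a generic wreath-like product could have trivial outer automorphism group, in which case no index-$n$ extension with outer action exists for $n\ge 2$; and even a group-level outer automorphism can in principle become inner at the von Neumann algebra level --- ``ICC plus trivial centralizer of $G_j$ in $H_n$'' only gives injectivity of $H_n\to\Aut(G_j)$, not outerness of the induced automorphisms of $\mathcal L(G_j)$.

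The paper closes exactly this gap by a specific choice of the continuum: it invokes \cite[Corollary 2.12]{CIOS23b} (Theorem \ref{out thm}) with $\Gamma=\bigoplus_{n\in\mathbb N}\mathbb Z/n\mathbb Z$, obtaining groups $G_j$ with $\Out(G_j)\cong\Gamma$ \emph{and} with the canonical map $\overline{\Psi}^{(j)}:\Out(G_j)\to\Out(\mathcal L(G_j))$ an isomorphism. For each $n$ one then picks a subgroup $\Gamma_0\le\Out(G_j)$ of order $n$, and Mac Lane's extension theory (Theorem \ref{extension}) together with Proposition \ref{cocycle construction} produces the required irreducible index-$n$ inclusion. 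So the missing idea in your proposal is precisely the prescription of $\Out(G_j)$ (so that it has finite subgroups of every order) together with the W$^*$-rigidity statement that group-level outerness persists at the factor level; without citing or reproving that rigidity, the irreducibility claim does not follow. Two smaller points: (i) you do not need $H_n$ to again be a wreath-like product covered by Theorem \ref{vsymmetries1} --- integrality of indices is controlled by Theorem \ref{vsymmetries1} applied to $G_j$ alone, and the downward basic construction only needs $\mathcal L(G_j)\subseteq\mathcal L(H_n)$ irreducible of index $n$, so this extra requirement only makes the construction harder; (ii) your derivation of pairwise non-stable-isomorphism from the index formula is a valid alternative route, though the paper gets this conclusion directly from \cite[Corollary 2.12]{CIOS23b}.
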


It would be very interesting to understand which other intermediate subsets $\mathbb{N}^2 \subseteq S \subseteq \mathbb{N}$ can arise as Jones index sets of property (T) group factors. In particular, one may ask whether there exist ICC property (T) groups $G$ for which $\mathscr{I}(\mathcal{L}(G)) = \mathbb{N}^2$, and moreover, whether there are situations where such factors admit no non-canonical finite-index subfactors, as in \cite{Va09}.

\paragraph{Acknowledgments.} The first author was supported by NSF Grants DMS-2154637 and DMS-2452247, and the second author was supported by U.S. ARO Grant W911NF-23-1-0026. The authors are grateful to Dietmar Bisch for his extensive feedback, which significantly improved the exposition and overall quality of the paper. The second author also thanks Koichi Oyakawa, and Kai Toyosawa for helpful discussions.

\section{Preliminaries }

\subsection{Cocycle crossed product von Neumann algebras}Let $(\M, \tr)$ be a tracial von Neumann algebra endowed with a normalized trace $\tr$ and $\mathscr{U}(\M)$ and $\Aut(\M)$ be its unitary group and automorphism group. Each $w\in \sU(\M)$ induces a conjugation automorphism $\Ad(w)\in\Aut(\M)$ given by $\Ad(w)(x)=wxw^*$. Let $\Gamma$ be a discrete group. 
\begin{defn}\label{cocycle_def}
    A \textit{cocycle action} of $\Gamma$ on $\M$ is denoted by $\Gamma \ca^{ \alpha,\omega} \M$ and consist of maps $\alpha:\Gamma\rightarrow\Aut(\M)$  and $\omega:\Gamma\times \Gamma\rightarrow \sU(\M)$ satisfying the following relations:
    \begin{enumerate}
        \item $\alpha_1=\Id_\M$,
        \item $\alpha_g\alpha_h=\Ad(\omega_{g,h})\alpha_{gh}$ for all $g,h\in \Gamma$, and
        \item $\alpha_g(\omega_{h,k})\omega_{g,hk}=\omega_{g,h}\omega_{gh,k}$ for all $g,h,k\in \Gamma$.
    \end{enumerate}
    Note that $\alpha$ and $\omega$ are not necessarily group homomorphisms. The map $\omega$ is called a \textit{$2$-cocycle} for $\alpha$.
\end{defn}
The following is a well-known fact:
\begin{prop} If $\M$ is a $\rm{II}_1$ factor and $ \Gamma\ca^{\alpha, \omega} \M$ is a cocycle action as in Definition \ref{cocycle_def} then the following hold:
    \begin{enumerate}
        \item $\omega_{1,1}\in \mathbb{C}1$.
        \item $\omega_{1,1}=\omega_{1,k}=\omega_{g,1}$ for all $g,k\in \Gamma$.
    \end{enumerate}
\end{prop}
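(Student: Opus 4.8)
The plan is to extract everything from the two algebraic identities in Definition~\ref{cocycle_def} — the twisted homomorphism property $\alpha_g\alpha_h=\Ad(\omega_{g,h})\alpha_{gh}$ and the $2$-cocycle identity $\alpha_g(\omega_{h,k})\omega_{g,hk}=\omega_{g,h}\omega_{gh,k}$ — together with the normalization $\alpha_1=\Id_\M$, by substituting the identity element $1\in\Gamma$ into suitable slots. Throughout I will use freely that each $\omega_{g,h}$ is a unitary, so it may be cancelled from one side of an equation, and that any automorphism of $\M$ fixes every scalar multiple of $1$.

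For part (1), first I would set $g=h=1$ in the twisted homomorphism relation. Since $\alpha_1=\Id_\M$, the left-hand side is $\Id_\M$ and the right-hand side is $\Ad(\omega_{1,1})\alpha_1=\Ad(\omega_{1,1})$; hence $\Ad(\omega_{1,1})=\Id_\M$, which says exactly that $\omega_{1,1}$ commutes with all of $\M$. As $\M$ is a factor, $Z(\M)=\mathbb{C}1$, so $\omega_{1,1}\in\mathbb{C}1$.

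For part (2), the key move is to set $h=1$ in the $2$-cocycle identity, which gives $\alpha_g(\omega_{1,k})\,\omega_{g,k}=\omega_{g,1}\,\omega_{g,k}$; cancelling the unitary $\omega_{g,k}$ on the right yields $\alpha_g(\omega_{1,k})=\omega_{g,1}$ for all $g,k\in\Gamma$. Specializing to $g=1$ and using $\alpha_1=\Id_\M$ gives $\omega_{1,k}=\omega_{1,1}$ for every $k$. Feeding this back into $\alpha_g(\omega_{1,k})=\omega_{g,1}$ and using that $\omega_{1,1}\in\mathbb{C}1$ (from part (1)) is fixed by the automorphism $\alpha_g$, we obtain $\omega_{g,1}=\alpha_g(\omega_{1,1})=\omega_{1,1}$ for every $g$. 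This establishes the chain $\omega_{1,1}=\omega_{1,k}=\omega_{g,1}$.

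Every step is a one-line substitution, so there is no real obstacle; the only point requiring mild care is the order of operations — one must first identify $\omega_{1,1}$ as a scalar in (1) before exploiting that fact in (2) to handle $\omega_{g,1}$ — together with the harmless use of unitarity of the $\omega_{g,h}$ to perform the cancellations. (One could alternatively derive $\omega_{1,k}=\omega_{1,1}$ straight from the $g=h=1$ case of the $2$-cocycle identity, but the route above is uniform.)
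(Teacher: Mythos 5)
Your argument is correct and complete: part (1) follows from $\Ad(\omega_{1,1})=\Id_\M$ and factoriality, and part (2) from the $h=1$ specialization of the cocycle identity together with the fact that automorphisms fix scalars. The paper states this proposition without proof (as a well-known fact), and your derivation is exactly the standard one, so there is nothing to compare against.
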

A $2$-cocycle $\omega$ is \textit{normalized} if $\omega_{1,g}=\omega_{g,1}=1$ for all $g\in \Gamma$. We can normalize $2$-cocycle by replacing $\omega_{g,h}$ by $\omega_{1,1}^*\omega_{g,h}$. Thus, throughout the paper we will assume that  all $2$-cocycles are normalized unless otherwise stated.
\begin{defn}
    Let $\Gamma \ca^{\alpha,\omega} \M$ be a cocycle action.  Then the  \textit{cocycle crossed product algebra} $\M\rtimes_{\alpha,\omega} \Gamma$ is the von Neumann algebra generated in  $\mathcal{B}(\ell^2(\Gamma,L^2(\M,\tr)))$ by the following operators:  
    \begin{enumerate}
        \item elements $x\in \M$ with the action given by $x(\xi)(h)=\alpha_h^{-1}(x)\xi(h)$ for all $\xi\in \ell^2(\Gamma,L^2(\M))$ and $h\in \Gamma$,
        \item unitaries $v_g$ given by $v_g(\xi)(h)= \alpha_h^{-1}(\omega_{g,g^{-1}h})\xi(g^{-1}h)$ for all $g\in G$.
    \end{enumerate}
\end{defn}
\noindent We notice these generators of $\M\rtimes_{\alpha,\omega}\Gamma$ satisfy the following relations:
    \begin{equation}
        v_gv_h=\omega_{g,h}v_{gh},\ \ \  v_gxv_g^*=\alpha_g(x),\ \ \ v_1=1. \label{crossed}
    \end{equation}

\subsection{Finite index inclusions of von Neumann algebras}

Given an inclusion of II$_1$ factors 
$\mathcal N \subseteq  \M$ its Jones index \cite{Jon83} is defined as the dimension $[\M : \mathcal N] = {\rm dim}_{\mathcal N} (L^2(\M))$. More generally, Pimsner and Popa discovered in \cite{PP86} a more probabilistic definition of finite index which then generalizes the index to an inclusion   $\mathcal N\subseteq \mathcal M$ of tracial von Neumann algebras. Specifically, consider $$c:=\inf \left \{\frac{\|\mathbb E_{\mathcal N}(x)\|^2_2}{\|x\|_2^2} \,|\, x\in \mathcal M_+, x\neq 0 \right \}.$$ 
Then we define the index of the inclusion $\mathcal N\subseteq \mathcal M$ as $[\mathcal M:\mathcal N]= c^{-1}$ with the convention that $\frac{1}{0}=\infty.$

We also say the inclusion $\mathcal N\subseteq \mathcal M$ has a finite (right) Pimsner-Popa basis $x_1,\ldots, x_n\in \mathcal M$ if these elements are $\mathcal N$-orthogonal and $\mathcal M = \sum_i \mathcal N x_i$. This implies that for every $x\in \mathcal M$ we have $x=\sum_{i}\mathbb E_{\mathcal N}(xx_i^*)x_i$. In \cite{PP86} it was proved that for II$_1$ factors $\mathcal N\subseteq \mathcal M$ the existence of a finite Pimsner-Popa basis for the inclusion  is equivalent to $[\mathcal M:\mathcal N]<\infty$. 

We continue by recording several basic facts from the literature concerning finite index inclusions of tracial von Neumann algebras which we will use  in the proofs of some of our main results. Recall that a von Neumann algebra $\M$ is called \textit{completely atomic} if $1$ is an orthogonal sum of minimal projections in $\M$. 

\begin{prop}\label{findex} Let $\mathcal N\subseteq \M$ be an inclusion of tracial von Neumann algebras with $[\M:\mathcal N]<\infty$. Then the following hold:

\begin{enumerate}

\item \cite{Jon83} If $p\in \mathcal N$ is a non-zero projection, then $[p\M p:p\mathcal N p]<\infty$. 
\item \cite[Relation 1.1.2(ii)]{Pop95} If $\mathcal N$ is a factor and $r\in \mathcal N'\cap \mathcal M$ is a non-zero projection, then $[r\mathcal Mr: \mathcal N r]<\infty$.


 \item \cite{Jon83} If $\mathcal N$ is a factor then ${\rm dim}_{\mathbb C} (\mathcal N' \cap \M) \leq  [\M : \mathcal N] + 1$.
\item  \cite[Relation 1.1.2(iv)]{Pop95} If  $\mathscr Z(\M)$ is completely atomic, then $\mathscr Z(\mathcal N)$ is completely atomic.


\item \cite{PP86} If $\M$ and $\mathcal N$ are factors, then there is a finite Pimsner-Popa basis for $\mathcal N\subseteq \M$.

\item \cite{Jon83,PP86} If $\mathcal N \subseteq \R\subseteq \mathcal M$ is a von Neumann subalgebra,  then $\max\{ [\mathcal M :\mathcal R], [\mathcal R:\mathcal N]\}\leq [\M:\mathcal N] \leq [\M:\mathcal R][\mathcal R:\mathcal N]$. In particular,  $[\M:\mathcal R]$, $[\mathcal R:\mathcal N]<\infty$.
\item \cite[Lemmas 2.2.1-2]{Jon83} If $\mathcal N$ and $\mathcal M$ are II$_1$ factors then for every projection $p \in \mathcal N'\cap \M$ we have that  $$[p\M p : \mathcal N p]=[\M:\mathcal N] \tr_\M (p)\tr_{\mathcal N'}(p).$$
Moreover, for any partition of unity $(p_i)\subset  \mathcal N'\cap \M$ by projections we have that 
$$[\M:\mathcal N]= \sum_i \frac{1}{\tr_{\M}(p_i)}[p_i \M p_i : \mathcal Np_i].$$
\item If $\mathcal N:=\mathcal P \rtimes H\subseteq \mathcal P \rtimes G=: \M$ for an inclusion of discrete groups $H\leqslant G$ and trace preserving action $ G \ca \mathcal P$ on  tracial von Neumann algebra $\mathcal P$ then $[\mathcal M:\mathcal N]=[G:H]$.

\end{enumerate}
\end{prop}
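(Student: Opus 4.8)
The plan is to exhibit an explicit Pimsner--Popa basis for the inclusion $\mathcal N=\mathcal P\rtimes H\subseteq\mathcal P\rtimes G=\M$ built from the canonical unitaries, and to read the index off from it. Write $(u_g)_{g\in G}\subset\M$ for the canonical unitaries implementing $G\ca\mathcal P$, so that $u_gu_h=u_{gh}$ and $u_g x u_g^*=g\cdot x$ for $x\in\mathcal P$, and let $\mathbb E_{\mathcal N}\colon\M\to\mathcal N$ be the trace-preserving conditional expectation determined by $\mathbb E_{\mathcal N}(xu_g)=xu_g$ for $g\in H$ and $\mathbb E_{\mathcal N}(xu_g)=0$ for $g\notin H$. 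I would fix a complete set $\{g_i\}_{i\in I}$ of representatives for the \emph{right} cosets of $H$ in $G$, so that $G=\bigsqcup_{i\in I}Hg_i$ and $|I|=[G:H]$, taking one representative to be $e$.

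First I would verify that $\{u_{g_i}\}_{i\in I}$ is an orthonormal Pimsner--Popa basis of $\M$ over $\mathcal N$. Grouping a generic element $y=\sum_{g\in G}x_gu_g\in\M$ according to the right coset of $g$ gives $y=\sum_{i\in I}n_iu_{g_i}$ with $n_i=\sum_{h\in H}x_{hg_i}u_h\in\mathcal N$ (convergence in $\|\cdot\|_2$ when $I$ is infinite), so $\M=\sum_i\mathcal N u_{g_i}$; a short computation shows moreover $n_i=\mathbb E_{\mathcal N}(yu_{g_i}^*)$, hence $y=\sum_i\mathbb E_{\mathcal N}(yu_{g_i}^*)u_{g_i}$. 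The orthonormality relation $\mathbb E_{\mathcal N}(u_{g_i}u_{g_j}^*)=\mathbb E_{\mathcal N}(u_{g_ig_j^{-1}})=\delta_{ij}1$ holds because $g_ig_j^{-1}\in H$ precisely when $Hg_i=Hg_j$, i.e.\ when $i=j$; in particular the terms $\mathbb E_{\mathcal N}(yu_{g_i}^*)u_{g_i}$ are pairwise $\|\cdot\|_2$-orthogonal, so $\|y\|_2^2=\sum_i\|\mathbb E_{\mathcal N}(yu_{g_i}^*)\|_2^2$.

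With this basis in hand, the standard Pimsner--Popa/Jones index formula for an orthonormal basis \cite{Jon83,PP86} gives $[\M:\mathcal N]=\sum_{i\in I}\tau(e_i)$, where $e_i\in\mathcal N$ is the support projection with $\mathbb E_{\mathcal N}(u_{g_i}u_{g_i}^*)=e_i$; since each $u_{g_i}$ is unitary we get $e_i=1$, hence $[\M:\mathcal N]=|I|=[G:H]$, with the convention $\frac1{0}=\infty$ covering the infinite-index case. If one prefers to treat the cases separately: when $[G:H]=n<\infty$, $\{u_{g_i}\}_{i=1}^n$ is a finite Pimsner--Popa basis with all support projections equal to $1$, so $[\M:\mathcal N]=n$; and when $[G:H]=\infty$, a finite index would yield a finite Pimsner--Popa basis, forcing $L^2(\M)$ to be finitely generated as a left $\mathcal N$-module, which contradicts the orthogonal decomposition $L^2(\M)\cong\bigoplus_{i\in I}L^2(\mathcal N)$ (the $i$-th summand being $\overline{\mathcal N u_{g_i}}$) when $|I|=\infty$.

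I do not expect any genuine obstacle here; the result is essentially routine. The only points requiring care are bookkeeping ones: one must use \emph{right} coset representatives so that the span identity $\M=\sum_i\mathcal N u_{g_i}$ and the orthonormality relation hold simultaneously, and one should note that the Pimsner--Popa basis theory applies to inclusions of tracial von Neumann algebras equipped with a trace-preserving conditional expectation, not only to $\mathrm{II}_1$ factors, since $\mathcal P$ (hence $\M$ and $\mathcal N$) need not be a factor in the stated generality.
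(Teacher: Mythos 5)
Your argument is correct and is the standard one for the only item of the proposition that the paper does not attribute to the literature, namely item (8); the paper records Proposition \ref{findex} without proof, and your right-coset Pimsner--Popa basis $\{u_{g_i}\}_{i\in I}$ with $\mathbb E_{\mathcal N}(u_{g_i}u_{g_j}^*)=\delta_{ij}1$ is exactly the canonical computation (the same basis appears in the paper's Appendix A for $\L(H)\subseteq\L(G)$). The one step you assert rather than verify --- that in the general tracial (non-factor) setting an orthonormal basis of $[G:H]$ unitaries yields probabilistic index exactly $[G:H]$, rather than merely the module decomposition $L^2(\M)\cong\bigoplus_{i\in I}L^2(\mathcal N)$ --- is precisely the crossed-product index computation of \cite{PP86}, so citing it there closes the argument.
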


\subsection{Construction of finite index subfactors from outer automorphisms}
In this subsection, we give a way to construct a finite index irreducible subfactor of an ICC group factor $\L(G)$ from a finite subgroup $\Gamma_0$ of the outer automorphism group $\Out(G)=\Aut(G)/\Inn(G)$. 

\begin{thm}[{\cite[Theorem IV.9.1]{Mac63}}]\label{extension}
    Let $G$ be a group with a trivial center and $\overline{\varphi}:\Gamma\rightarrow \Out(G)$ be a homomorphism. Then there is an extension $1\rightarrow G\rightarrow H\rightarrow \Gamma\rightarrow 1$ such that the induced homomorphism also denoted by $\overline{\varphi}:H/G\rightarrow \Out(G)$ satisfies
    $\overline{\varphi}(hG)=\Ad(h)\Inn(G)$.
\end{thm}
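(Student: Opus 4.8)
The plan is to realize the required $H$ explicitly as a fiber product, thereby bypassing the obstruction theory for extensions with non-abelian kernel. Recall the canonical short exact sequence $1\to\Inn(G)\to\Aut(G)\xrightarrow{\pi}\Out(G)\to1$. I would set
$$H:=\Aut(G)\times_{\Out(G)}\Gamma=\{(\alpha,s)\in\Aut(G)\times\Gamma \ :\ \pi(\alpha)=\overline\varphi(s)\},$$
which is a subgroup of $\Aut(G)\times\Gamma$ because $\pi$ and $\overline\varphi$ are homomorphisms, and I would let $q:H\to\Gamma$ be the second coordinate projection.

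The first step is to check that $q$ is surjective with kernel naturally isomorphic to $G$. Surjectivity of $q$ follows from surjectivity of $\pi$: given $s\in\Gamma$, choose any $\alpha\in\pi^{-1}(\overline\varphi(s))$, so that $(\alpha,s)\in H$. The kernel of $q$ is $\{(\alpha,1):\pi(\alpha)=1\}=\Inn(G)\times\{1\}$. This is the only place the hypothesis $Z(G)=\{1\}$ is used: it guarantees that $g\mapsto\Ad(g)$ is an isomorphism $G\xrightarrow{\sim}\Inn(G)$, so that $\iota:G\to H$, $\iota(g)=(\Ad(g),1)$, is an injective homomorphism with image $\ker q$. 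Hence $1\to G\xrightarrow{\iota}H\xrightarrow{q}\Gamma\to1$ is exact, and identifying $G$ with $\iota(G)\trianglelefteq H$ gives $H/G\cong\Gamma$.

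The second step is the conjugation formula. Since $\iota(G)$ is normal, each $h=(\alpha,s)\in H$ acts on it by conjugation, and since $H\subseteq\Aut(G)\times\Gamma$ a direct computation gives $h\,\iota(g)\,h^{-1}=(\alpha\Ad(g)\alpha^{-1},1)=(\Ad(\alpha(g)),1)=\iota(\alpha(g))$ for all $g\in G$; that is, the conjugation automorphism $\Ad(h)|_G$ of $G$ equals $\alpha$. Passing to $\Out(G)$, its class is $\pi(\alpha)=\overline\varphi(s)=\overline\varphi(hG)$ under the identification $H/G\cong\Gamma$, which is exactly the asserted identity $\overline\varphi(hG)=\Ad(h)\Inn(G)$. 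It also shows the induced map $H/G\to\Out(G)$ is well defined (elements of $\iota(G)$ act by inner automorphisms) and equals $\overline\varphi$.

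There is essentially no obstacle in this argument; the point being sidestepped is that in the general Eilenberg--Mac Lane theory one chooses a set-theoretic section $\varphi:\Gamma\to\Aut(G)$ of $\overline\varphi$ together with a non-abelian $2$-cocycle $f:\Gamma\times\Gamma\to G$ measuring the failure of $\varphi$ to be multiplicative, and associativity of the would-be multiplication $(g,s)(h,t)=(g\,\varphi(s)(h)\,f(s,t),st)$ on $G\times\Gamma$ is controlled by an obstruction class in $H^3(\Gamma;Z(G))$; the content of the trivial-center hypothesis is precisely that this obstruction group vanishes (and, again because $Z(G)=\{1\}$, the resulting extension is unique up to equivalence). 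The fiber-product construction above is a coordinate-free incarnation of that section-and-cocycle argument, and the single genuine use of $Z(G)=\{1\}$ --- the identification $G\cong\Inn(G)$ --- is the same point at which the obstruction-theoretic proof uses it. One may alternatively simply invoke \cite[Theorem IV.9.1]{Mac63} directly, but the above gives a self-contained route.
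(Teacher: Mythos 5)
Your argument is correct: the fiber product $H=\Aut(G)\times_{\Out(G)}\Gamma$ is a genuine subgroup, the second projection is onto with kernel $\Inn(G)\times\{1\}$, the trivial-center hypothesis identifies this kernel with $G$ via $g\mapsto\Ad(g)$, and the computation $h\,\iota(g)\,h^{-1}=\iota(\alpha(g))$ for $h=(\alpha,s)$ shows the induced map $H/G\to\Out(G)$ is well defined and equals $\overline\varphi$. This is, however, a different route from the one the paper takes: the paper does not prove the statement but cites Mac~Lane's Theorem~IV.9.1 and then summarizes the section-and-cocycle construction, choosing lifts $\varphi_k\in\Aut(G)$ of $\overline\varphi_k$, recording the elements $g_{k_1,k_2}$ with $\varphi_{k_1}\varphi_{k_2}=\Ad(g_{k_1,k_2})\varphi_{k_1k_2}$, and defining $H$ by an explicit presentation on $G$ and symbols $h_k$. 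Your pullback construction is cleaner and self-contained (it avoids verifying associativity and the $H^3(\Gamma;Z(G))$ obstruction altogether, as you note), and it is consistent with the paper's Remark~\ref{group preimage}(1), which for injective $\overline\varphi$ identifies $H$ with $\pi^{-1}(\overline\varphi(\Gamma))\le\Aut(G)$; by uniqueness of extensions realizing a given abstract kernel when $Z(G)=1$, the two constructions produce equivalent extensions. What the paper's coordinate-dependent description buys, and what your write-up leaves implicit, is the explicit $G$-valued $2$-cocycle $(k_1,k_2)\mapsto g_{k_1,k_2}$ used in Remark~\ref{group preimage}(2) to exhibit $\L(H)$ as a cocycle crossed product $\L(G)\rtimes\Gamma$, which is what the subsequent index computations rely on; in your model the same data appears only after choosing a set-theoretic section of $q$, exactly as you indicate in your closing paragraph.
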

The description of $H$ is given in \cite[Section IV.8]{Mac63}. We briefly summarize the construction. For each $k\in \Gamma$, we choose $\varphi_k\in \Aut(G)$ whose image in $\Out(G)$ is $\overline{\varphi}_k$. Then for each pair of elements $k_1,k_2\in \Gamma$, we have  $\varphi_{k_1}\varphi_{k_2}=\Ad(g_{k_1,k_2})\varphi_{k_1k_2}$ for some $g_{k_1,k_2}$. Then $H$ is given by $$
H=\langle G, \{h_k\}_{k\in \Gamma}\ |\  h_kgh_k^{-1}=\varphi_k(g)\text{ for all  $g\in G$ and $k\in \Gamma$},\  h_{k_1}h_{k_2}=g_{k_1,k_2}h_{k_1k_2}\text{ for all $k_1,k_2\in \Gamma$}\rangle.
$$
\begin{rmk}\label{group preimage}
    \begin{enumerate}
        \item When $G$ has trivial center and $\overline{\varphi}$ is injective, then $H$ is isomorphic to the preimage of $\overline{\varphi}(\Gamma)\le \Out(G)$ in $\Aut(G)$ \cite[Section IV.9]{Mac63}.
        \item The map $\Gamma\times\Gamma\rightarrow G$ given by $(k_1,k_2)\mapsto g_{k_1,k_2}$ is a $G$-valued $2$-cocycle \cite[Section IV.8]{Mac63}, \cite[Section 3.1]{Sut80B}. Thus, $\L(H)$ is a cocycle crossed product $\L(G)\rtimes \Gamma$. In particular, the $3$-cocycle obstruction for the crossed product is trivial. 
    \end{enumerate}
\end{rmk}

\begin{thm}[{\cite[Theorem 6.1]{Sut80B}}]\label{Sutherland}
    Any cocycle crossed product of a $\rm{II}_1$ von Neumann algebra with a finite group is an ordinary crossed product.
\end{thm}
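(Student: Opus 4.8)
\noindent\emph{Sketch of the intended argument.} Write $\mathcal{N}=\mathcal{M}\rtimes_{\alpha,\omega}\Gamma$ for a cocycle action $\Gamma\curvearrowright^{\alpha,\omega}\mathcal{M}$ of a finite group $\Gamma$, $n:=|\Gamma|$, on a $\mathrm{II}_1$ von Neumann algebra $\mathcal{M}$; the goal is to produce a genuine action $\beta\colon\Gamma\to\Aut(\mathcal M)$ with $\mathcal{N}\cong\mathcal M\rtimes_\beta\Gamma$. A direct-integral argument over $\mathscr Z(\mathcal M)$ reduces everything to the case where $\mathcal M$ is a $\mathrm{II}_1$ factor—which is also the only case needed in this paper—so I treat that case. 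The plan is first to amplify and untwist, then to descend back to $\mathcal M$ by compressing with a well-chosen $\tilde\alpha$-fixed projection.

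First I would amplify. Fix matrix units $\{e_{k,l}\}_{k,l\in\Gamma}$ for $\mathbb M_n(\mathbb C)\cong B(\ell^2\Gamma)$; tensoring the canonical unitaries $(v_g)_{g\in\Gamma}\subset\mathcal{N}$ by $1$ identifies $\mathcal{N}\otimes\mathbb M_n(\mathbb C)$ with $(\mathcal M\otimes\mathbb M_n(\mathbb C))\rtimes_{\alpha\otimes\Id,\,\omega\otimes 1}\Gamma$. On this amplification the cocycle $\omega\otimes 1$ becomes a coboundary: the unitaries
\[
\nu_g:=\sum_{l\in\Gamma}\omega_{g,l}^*\otimes e_{gl,\,l}\ \in\ \sU\!\big(\mathcal M\otimes\mathbb M_n(\mathbb C)\big)
\]
satisfy $\nu_1=1$ and $\nu_g\,(\alpha_g\otimes\Id)(\nu_h)\,(\omega_{g,h}\otimes 1)=\nu_{gh}$, a direct computation using only the cocycle identity of Definition~\ref{cocycle_def}(3). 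Hence $w_g:=\nu_g(v_g\otimes1)$ obey $w_1=1$, $w_gw_h=w_{gh}$, normalize $\mathcal M\otimes\mathbb M_n(\mathbb C)$, and generate $\mathcal{N}\otimes\mathbb M_n(\mathbb C)$ over it, yielding an ordinary crossed product
\[
\mathcal{N}\otimes\mathbb M_n(\mathbb C)\ =\ P\rtimes_{\tilde\alpha}\Gamma,\qquad P:=\mathcal M\otimes\mathbb M_n(\mathbb C),\qquad \tilde\alpha_g:=\Ad(\nu_g)\circ(\alpha_g\otimes\Id),
\]
with $\tilde\alpha$ a genuine action of $\Gamma$ on the $\mathrm{II}_1$ factor $P$ (an amplification of $\mathcal M$).

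Next I would descend to $\mathcal M$. The averaging map $E(x)=\tfrac1n\sum_g\tilde\alpha_g(x)$ is the trace-preserving conditional expectation of $P$ onto $P^{\tilde\alpha}$ and satisfies $E(x)\ge\tfrac1n x$ for $x\ge 0$, so $[P:P^{\tilde\alpha}]<\infty$; by Proposition~\ref{findex}(4), $P^{\tilde\alpha}=\bigoplus_k Q_k$ with each $Q_k$ a finite factor. No $Q_k$ can be finite dimensional: its central support $f$ lies in $P^{\tilde\alpha}$, so $\tilde\alpha$ restricts to the $\mathrm{II}_1$ factor $fPf$, whose fixed-point algebra $fP^{\tilde\alpha}f=Q_k$ would then be finite dimensional and, by Proposition~\ref{findex}(1), of finite index in $fPf$—forcing $fPf$ finite dimensional, a contradiction. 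Thus $P^{\tilde\alpha}$ is diffuse and contains a projection $e$ with $\tr_P(e)=1/n=\tr_P(1_{\mathcal M}\otimes e_{11})$. Since $w_gew_g^*=e$, one gets $e(\mathcal{N}\otimes\mathbb M_n(\mathbb C))e=e(P\rtimes_{\tilde\alpha}\Gamma)e=(ePe)\rtimes_{\tilde\alpha|_{ePe}}\Gamma$, an ordinary crossed product; and because $P\subset\mathcal{N}\otimes\mathbb M_n(\mathbb C)$ is a factor in which $e\sim 1_{\mathcal M}\otimes e_{11}$ (equal trace), compressing by $e$ versus by $1_{\mathcal M}\otimes e_{11}$ gives $e(\mathcal{N}\otimes\mathbb M_n(\mathbb C))e\cong\mathcal{N}$ and $ePe\cong\mathcal M$. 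Transporting $\tilde\alpha|_{ePe}$ through the latter isomorphism to a genuine action $\beta$ of $\Gamma$ on $\mathcal M$ yields $\mathcal{N}\cong\mathcal M\rtimes_\beta\Gamma$.

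I expect the descent step to be the real obstacle, and the only place where both hypotheses are essential: $\mathcal M$ of type $\mathrm{II}_1$ (not $\mathrm I$) and $\Gamma$ finite are used exactly to force $P^{\tilde\alpha}$ diffuse—equivalently, to rule out a finite group acting on a $\mathrm{II}_1$ factor with finite-dimensional fixed-point algebra—so that a trace-$1/n$ $\tilde\alpha$-fixed projection is available to compress with. This is precisely where the statement fails in general: for $\mathcal M=\mathbb C$ and $\Gamma$ with a nontrivial Schur multiplier, $P^{\tilde\alpha}$ is a direct sum of matrix algebras with no trace-$1/n$ projection, consistent with $\mathbb C_\omega[\Gamma]\not\cong\mathbb C[\Gamma]$. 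By comparison, the untwisting in the middle step is an explicit algebraic identity and poses no difficulty.
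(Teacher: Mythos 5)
The paper does not actually prove this statement: Theorem \ref{Sutherland} is quoted from Sutherland's paper \cite{Sut80B} and used as a black box (in Corollary \ref{index} and Proposition \ref{cocycle construction}), so the comparison here is with Sutherland's argument rather than with anything internal to the paper. Your sketch is, in the factor case, a correct and reasonably standard ``stabilize and untwist'' proof: the identity $\nu_g(\alpha_g\otimes\Id)(\nu_h)(\omega_{g,h}\otimes 1)=\nu_{gh}$ does reduce exactly to condition (3) of Definition \ref{cocycle_def} (together with the normalization $\omega_{1,g}=\omega_{g,1}=1$ assumed in the paper), so the $w_g=\nu_g(v_g\otimes 1)$ satisfy the group law and exhibit $\mathcal N\otimes\mathbb M_n(\mathbb C)$ as an ordinary crossed product of $P=\mathcal M\otimes\mathbb M_n(\mathbb C)$; the descent via a $\tilde\alpha$-fixed projection $e$ of trace $1/n$, the cutting identity $e(P\rtimes_{\tilde\alpha}\Gamma)e=(ePe)\rtimes\Gamma$ (using $w_ge=ew_g$ and the canonical expectation), and the equivalence $e\sim 1_{\mathcal M}\otimes e_{11}$ inside the factor $P$ all check out, and the resulting isomorphism even carries the canonical copy of $\mathcal M$ onto $\mathcal M$, which is the form actually needed for Corollary \ref{index}. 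Your use of Proposition \ref{findex} (items (1) and (4)) to rule out finite-dimensional summands of $P^{\tilde\alpha}$ and thus produce $e$ is fine, though slightly more than needed: any minimal projection $q$ of $P^{\tilde\alpha}$ would give $[qPq:\mathbb Cq]<\infty$ by \ref{findex}(1), which is already impossible in a diffuse corner, so $P^{\tilde\alpha}$ is diffuse and a trace-$1/n$ projection exists.

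The one genuine gap, measured against the statement as quoted, is the opening sentence: the theorem is asserted for arbitrary II$_1$ von Neumann algebras, and ``a direct-integral argument over $\mathscr Z(\mathcal M)$ reduces everything to the factor case'' is not automatic. The induced action of $\Gamma$ on $\mathscr Z(\mathcal M)$ is a genuine action that may permute the central summands, so the disintegration is over the quotient of the center by $\Gamma$, the fibers over a point are acted on only by stabilizer subgroups, and one must choose the untwisting unitaries (and the fixed projection) measurably across the field; this is precisely where Sutherland's cohomological machinery does real work. Alternatively, note that your amplification/untwisting step uses no factoriality at all, so the honest remaining issue in the general case is producing a $\tilde\alpha$-fixed projection $e\in P$ with center-valued trace $\tfrac1n 1$ (equivalently $e\sim 1_{\mathcal M}\otimes e_{11}$ in $P$), which requires an extra argument when $\mathscr Z(P)$ is not pointwise fixed. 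Since the paper only ever applies the theorem with $\mathcal M$ a II$_1$ factor, your argument suffices for every use made of Theorem \ref{Sutherland} here, but as a proof of the stated theorem it is incomplete without that reduction; either supply it or restrict the statement you prove to the factor case and cite \cite{Sut80B} for the rest.
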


\begin{cor}\label{index}
    Let $\M$ be a II$_1$ factor with a normalized trace $\tr$, $\Gamma_0$ be a finite group and $\Gamma_0\ca^{ \alpha,\omega}\M$ be a cocycle action. 
    If $\alpha$ induces an injective homomorphism $\overline{\alpha}:\Gamma_0\rightarrow\Out(\M)$ from $\Gamma_0$ to the outer automorphism group $\Out(\M)=\Aut(\M)/\Inn(\M)$ of $\M$, then the following hold:
    \begin{enumerate}
        \item \cite{Sut80A, Sut80B} $\M\rtimes_{\alpha,\omega}\Gamma_0$ is a II$_1$ factor, its trace being given by 
        $$
        \tr_{\M\rtimes_{\alpha,\omega}\Gamma_0}\left(\sum_{k\in \Gamma_0}x_kv_k\right)=\tr_M(x_1).
        $$
        \item \cite{OK90, KY92} $\M\subset \M\rtimes_{\alpha,\omega}\Gamma_0$ is an irreducible inclusion.
        \item \cite{OK90, KY92} $[\M\rtimes_{\alpha,\omega}\Gamma_0:\M]=|\Gamma_0|$.
    \end{enumerate}
\end{cor}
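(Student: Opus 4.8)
The plan is to deduce all three items from the structural fact supplied by Theorem \ref{Sutherland} together with the already-recorded behavior of crossed products by outer actions. First I would invoke Theorem \ref{Sutherland}: since $\Gamma_0$ is finite and $\M$ is a $\mathrm{II}_1$ factor, the cocycle crossed product $\M \rtimes_{\alpha,\omega} \Gamma_0$ is $\ast$-isomorphic to an ordinary crossed product $\M \rtimes_{\beta} \Gamma_0$ for some genuine action $\beta:\Gamma_0 \to \Aut(\M)$. The key point is that this isomorphism can be taken to be the identity on the copy of $\M$ and to send each cocycle unitary $v_k$ to a unitary of the form $u_k v_k'$ with $u_k \in \sU(\M)$, so that $\beta_k = \Ad(u_k)\alpha_k$; in particular $\beta$ and $\alpha$ induce the \emph{same} homomorphism $\Gamma_0 \to \Out(\M)$, which is injective by hypothesis. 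Thus $\beta$ is an outer (hence properly outer, $\M$ being a factor) action of the finite group $\Gamma_0$ on the $\mathrm{II}_1$ factor $\M$.

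Given this reduction, item (1) follows from the classical fact that the crossed product of a $\mathrm{II}_1$ factor by a properly outer action of a discrete group is again a $\mathrm{II}_1$ factor; the trace formula $\tr(\sum_k x_k v_k) = \tr_\M(x_1)$ is the canonical trace on the crossed product, which is visibly unchanged by the twist since the isomorphism fixes $\M$ pointwise and the trace only reads off the coefficient of $v_1 = 1$. For item (2), I would use that for a properly outer action $\beta$ of any group, $\M' \cap (\M \rtimes_\beta \Gamma_0) = Z(\M) = \mathbb C 1$: expanding a relative commutant element as $\sum_k x_k v_k'$ and commuting with arbitrary $x \in \M$ forces $x_k (\beta_k(x)) = x x_k$ for all $x$, i.e. $x_k$ implements $\beta_k$ on $\M$, which for $k \neq 1$ contradicts proper outerness unless $x_k = 0$; hence the inclusion is irreducible, and the same holds after transporting back through the identity-on-$\M$ isomorphism. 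For item (3), the index is computed by Proposition \ref{findex}(8): writing $\M \rtimes_\beta \Gamma_0 = \mathcal P \rtimes \Gamma_0$ with $\mathcal P = \M$ and $H = \{1\} \leqslant \Gamma_0$ gives $[\M \rtimes_\beta \Gamma_0 : \M] = [\Gamma_0 : \{1\}] = |\Gamma_0|$.

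The main obstacle is the precise control over the isomorphism furnished by Theorem \ref{Sutherland}: the cited statement only asserts abstract $\ast$-isomorphism of the two crossed products, whereas the argument above needs the isomorphism to restrict to the identity on $\M$ and to carry $v_k$ to $\M$-multiples of new cocycle unitaries. This is indeed what Sutherland's construction produces — the $3$-cohomology obstruction of the cocycle action vanishes (as noted in Remark \ref{group preimage}(2) in our setting), so the $2$-cocycle $\omega$ can be untwisted by a unitary-valued $1$-cochain $u:\Gamma_0 \to \sU(\M)$ with $\omega_{g,h} = u_g\, \alpha_g(u_h)\, u_{gh}^*$, and setting $v_k' = u_k^* v_k$ gives honest covariance relations $v_k' v_l' = v_{kl}'$ — so I would spell out this untwisting explicitly rather than quoting Theorem \ref{Sutherland} as a black box. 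The remaining steps (proper outerness of $\beta$ from injectivity of $\overline\alpha$, factoriality and irreducibility of crossed products by properly outer actions, and the index computation) are standard and cited, so no serious difficulty is expected there; references to \cite{Sut80A, Sut80B, OK90, KY92} cover the individual assertions.
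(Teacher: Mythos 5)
Your proposal is correct and follows the same route as the paper, whose entire proof is the remark that the corollary ``follows directly from Theorem \ref{Sutherland} and well-known properties about the ordinary crossed product''; you supply precisely the details left implicit there (untwisting the $2$-cocycle so that the isomorphism fixes $\M$ pointwise and only perturbs the $v_k$ by unitaries of $\M$, transporting outerness of the induced map to $\Out(\M)$, deducing factoriality and irreducibility from the standard relative-commutant computation, and getting the index from Proposition \ref{findex}(8) with $H=\{1\}$). One terminological nit: the vanishing you need is of the $2$-cohomology obstruction to perturbing $(\alpha,\omega)$ into a genuine action --- which is exactly what Sutherland's theorem provides for finite groups acting on II$_1$ algebras --- rather than the $3$-cocycle obstruction of Remark \ref{group preimage}(2), which only governs the existence of a cocycle action lifting a given homomorphism into the outer automorphism group.
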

Corollary \ref{index} follows directly from Theorem \ref{Sutherland} and well-known properties about the ordinary crossed product. It can also be verified by direct computation.

We end this section with a canonical, yet very useful, procedure for constructing finite index subfactors using outer automorphisms of a factor.  

\begin{prop}\label{cocycle construction}
    Let $G$ be an ICC group. Let $\Psi:\Aut(G)\rightarrow \Aut(\mathcal{L}(G))$ be the homomorphism defined by
    $$
    \Psi(\varphi)(u_g)=u_{\varphi(g)} \text{ for all } \varphi\in {\rm Aut}(G)\text{ and } g\in G.
    $$
    Furthermore, let $\overline{\Psi}:\Out(G)\rightarrow \Out(\mathcal{L}(G))$ be the canonical homomorphism induced by $\Psi$,
    $$\overline{\Psi}(\varphi {\rm Inn} (G))=\Psi(\varphi){\rm Inn}(\L (G))  \text{ for all } \varphi\in {\rm Aut}(G). $$
    
    \noindent Assume  $\Gamma_0\le \Out(G)$ is a finite subgroup and $\overline{\Psi}|_{\Gamma_0}$ is injective. Then there is an extension $1\rightarrow G\rightarrow H\rightarrow \Gamma_0\rightarrow 1$ such that $\L(G)\subseteq \L(H)$ is an irreducible subfactor with index $|\Gamma_0|$. In particular, $H$ is ICC. Moreover, there exist irreducible subfactors $\mathcal N \subseteq \L(G)$ such that $[\L(G): \mathcal N]=|\Gamma_0|$.
\end{prop}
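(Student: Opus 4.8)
The plan is to build the extension $1\to G\to H\to\Gamma_0\to 1$ via Theorem \ref{extension} applied to the homomorphism $\overline\varphi:\Gamma_0\hookrightarrow \Out(G)$ (the inclusion), then to transport everything through $\overline\Psi$ and apply Corollary \ref{index}. First I would invoke Theorem \ref{extension}: since $G$ is ICC it has trivial center, so there is an extension $1\to G\to H\to\Gamma_0\to 1$ realizing $\overline\varphi$, with the explicit presentation $H=\langle G,\{h_k\}_{k\in\Gamma_0}\mid h_kgh_k^{-1}=\varphi_k(g),\ h_{k_1}h_{k_2}=g_{k_1,k_2}h_{k_1k_2}\rangle$, where $\varphi_k\in\Aut(G)$ lifts $k\in\Gamma_0\le\Out(G)$ and $(k_1,k_2)\mapsto g_{k_1,k_2}$ is the associated $G$-valued $2$-cocycle (Remark \ref{group preimage}(2)). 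Applying $\Psi$ to this data, one sees directly from the presentation that $\L(H)$ is generated by $\L(G)$ together with unitaries $v_{h_k}$ satisfying $v_{h_k}xv_{h_k}^*=\Psi(\varphi_k)(x)$ for $x\in\L(G)$ and $v_{h_{k_1}}v_{h_{k_2}}=u_{g_{k_1,k_2}}v_{h_{k_1k_2}}$; that is, $\L(H)=\L(G)\rtimes_{\alpha,\omega}\Gamma_0$ is the cocycle crossed product for the cocycle action $\alpha_k:=\Psi(\varphi_k)$ with $2$-cocycle $\omega_{k_1,k_2}:=u_{g_{k_1,k_2}}\in\sU(\L(G))$. (This is exactly the content of Remark \ref{group preimage}(2), which already records that $\L(H)$ is such a cocycle crossed product.)

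Next I would check that the hypotheses of Corollary \ref{index} are met with $\M=\L(G)$. The map $\overline\alpha:\Gamma_0\to\Out(\L(G))$ induced by $k\mapsto\alpha_k=\Psi(\varphi_k)$ is precisely $\overline\Psi\circ\overline\varphi=\overline\Psi|_{\Gamma_0}$ (using that $\varphi_k$ lifts $k$ in $\Out(G)$ and that $\overline\Psi$ is well-defined on $\Out(G)$), which is injective by assumption. Also $\L(G)$ is a $\mathrm{II}_1$ factor since $G$ is ICC. Therefore Corollary \ref{index} gives that $\L(G)\subseteq\L(G)\rtimes_{\alpha,\omega}\Gamma_0=\L(H)$ is an irreducible inclusion of $\mathrm{II}_1$ factors with $[\L(H):\L(G)]=|\Gamma_0|$. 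In particular $\L(H)$ is a factor, so $H$ must be ICC (if $H$ had a nontrivial finite conjugacy class, $\L(H)$ would have nontrivial center).

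For the final clause — producing irreducible subfactors $\mathcal N\subseteq\L(G)$ with $[\L(G):\mathcal N]=|\Gamma_0|$ — I would apply the downward basic construction of Jones \cite{Jon83} (see also \cite{PP86}) to the irreducible finite-index inclusion $\L(G)\subseteq\L(H)$. Since $\L(G)\subseteq\L(H)$ is an irreducible inclusion of $\mathrm{II}_1$ factors with integer index $n=|\Gamma_0|$, there is a downward basic construction producing a subfactor $\mathcal N\subseteq\L(G)$ with $\L(G)\subseteq\L(H)$ isomorphic (as an inclusion) to the Jones basic construction of $\mathcal N\subseteq\L(G)$; in particular $[\L(G):\mathcal N]=[\L(H):\L(G)]=n$, and $\mathcal N'\cap\L(G)\subseteq\L(G)'\cap\L(H)=\mathbb C1$, so $\mathcal N\subseteq\L(G)$ is irreducible. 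I do not anticipate a serious obstacle here; the only point requiring care is the bookkeeping in identifying $\overline\alpha$ with $\overline\Psi|_{\Gamma_0}$ and confirming that the cocycle $\omega$ arising from Mac Lane's construction takes values in $\sU(\L(G))$ (which it does, being a $G$-valued cocycle composed with $g\mapsto u_g$), so that Corollary \ref{index} applies verbatim. The genuinely nontrivial input — that a finite subgroup of $\Out(\L(G))$ gives an honest crossed-product description with the stated index and irreducibility — is already packaged in Theorem \ref{Sutherland} and Corollary \ref{index}, so the proof is essentially an assembly of the preceding results.
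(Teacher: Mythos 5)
Your proof is correct and follows essentially the same route as the paper: Theorem \ref{extension} plus Remark \ref{group preimage} to realize $\L(H)$ as the cocycle crossed product $\L(G)\rtimes_{\alpha,\omega}\Gamma_0$, Corollary \ref{index} (via the identification $\overline\alpha=\overline\Psi|_{\Gamma_0}$) for irreducibility and the index, and the downward basic construction for the final clause. The only cosmetic quibble is that $\mathcal N'\cap\L(G)$ is not literally contained in $\L(G)'\cap\L(H)$ but rather anti-isomorphic to it via the basic-construction identification; the conclusion that $\mathcal N\subseteq\L(G)$ is irreducible is of course still correct.
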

\begin{proof}
    Since $G$ is ICC, it has a trivial center. Consider the extension $1\rightarrow G\rightarrow H\rightarrow \Gamma_0\rightarrow 1$ as in Theorem \ref{extension}. Then by Remark \ref{group preimage}, $\L(H)\cong \L(G)\rtimes_{\alpha, \omega} \Gamma_0$ for some cocycle action $\Gamma_0\ca^{\alpha,\omega} \M$. The injectivity of $\overline{\Psi}|_{\Gamma_0}$ and Corollary \ref{index} implies that $\L(G)\subseteq\L(H)$ is an irreducible subfactor with index $|\Gamma_0|$. Performing the downward basic construction $\mathcal N \subseteq \L(G)\subseteq \L(H)$ gives a subfactor of index $|\Gamma_0|$.
\end{proof}

\subsection{Popa's intertwining techniques}

In \cite{Pop06} introduced the following powerful criterion for the existence of intertwining between von Neumann algebras.

\begin{thm}\emph{\cite{Pop06}} \label{corner} Let $( \mathcal{M},\tr)$ be a  tracial von Neumann algebra and let $ \mathcal{P},  \mathcal{Q}\subseteq  \mathcal{M}$ be (not necessarily unital) von Neumann subalgebras. 
	Then the following are equivalent:
	\begin{enumerate}
		\item There exist projections $ p\in    \mathcal{P}, q\in    \mathcal{Q}$, a $\ast$-homomorphism $\Theta:p  \mathcal{P} p\rightarrow q \mathcal{Q} q$  and a partial isometry $0\neq v\in  \mathcal{M} $ such that $v^*v\leq p$, $vv^*\leq q$ and $\Theta(x)v=vx$, for all $x\in p  \mathcal{P} p$.
		\item For any group $ \mathcal G \subset \mathscr U( \mathcal{P})$ such that $ \mathcal G''=  \mathcal{P}$ there is no net $(u_n)_n\subset \mathcal G$ satisfying $\|\mathbb{E}_{  \mathcal{Q}}(xu_ny)\|_2\rightarrow 0$, for all $x,y\in   \mathcal{M}$.
		\item There exist finitely many $x_i, y_i \in  \mathcal{M}$ and $c>0$ such that  $\sum_i \| \mathbb{E}_{\mathcal{Q}}(x_i u y_i) \|^2_2 \geq c$, for all $u\in \mathscr U (\mathcal{P})$.
	\end{enumerate}
\end{thm}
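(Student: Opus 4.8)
This is Popa's intertwining-by-bimodules criterion from \cite{Pop06}, so the plan is to reconstruct its proof. The whole argument takes place inside the Jones basic construction $\langle\mathcal M,e_{\mathcal Q}\rangle$ of $\mathcal Q\subseteq\mathcal M$, carrying its canonical faithful normal semifinite trace $\Tr$ determined by $\Tr(xe_{\mathcal Q}y)=\tr(xy)$ for $x,y\in\mathcal M$, and it repeatedly uses the identity $\|\mathbb E_{\mathcal Q}(a)\|_2^2=\Tr(ae_{\mathcal Q}a^*e_{\mathcal Q})$ for $a\in\mathcal M$. I would first record the standard reformulation of (1): \emph{(1) holds if and only if $\mathcal P'\cap\langle\mathcal M,e_{\mathcal Q}\rangle$ contains a nonzero projection $f$ of finite trace, $\Tr(f)<\infty$.} In one direction one passes to the $\mathcal P$-$\mathcal Q$-bimodule $\mathcal K:=f(L^2\mathcal M)$, which is finitely generated over $\mathcal Q$ precisely because $\Tr(f)<\infty$, and extracts $(p,q,v,\Theta)$ from the left $\mathcal P$-action on this finite right-$\mathcal Q$-module; conversely, the data in (1) produces the orthogonal projection onto the $\|\cdot\|_2$-closure in $L^2(\mathcal M)$ of the bimodule generated by $v$, which lands in $\mathcal P'\cap\langle\mathcal M,e_{\mathcal Q}\rangle$ with finite $\Tr$.

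The equivalence $(2)\Leftrightarrow(3)$ is then essentially formal: $(3)$ fails exactly when there is a net $(u_n)\subseteq\mathscr U(\mathcal P)$ with $\|\mathbb E_{\mathcal Q}(xu_ny)\|_2\to0$ for all $x,y\in\mathcal M$ (diagonalize over the directed set of pairs consisting of a finite $F\subseteq\mathcal M$ and an $\varepsilon>0$); and since $\mathscr U(\mathcal P)$ is itself a group with $\mathscr U(\mathcal P)''=\mathcal P$, and since any such bad net inside a subgroup $\mathcal G\subseteq\mathscr U(\mathcal P)$ is also a bad net inside $\mathscr U(\mathcal P)$, this is exactly the negation of $(2)$. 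For $(1)\Rightarrow(3)$ I use the reformulation: let $f\in\mathcal P'\cap\langle\mathcal M,e_{\mathcal Q}\rangle$ with $0<\Tr(f)<\infty$, pick a finite Pimsner--Popa basis $m_1,\ldots,m_n\in\mathcal M$ of $f(L^2\mathcal M)$ over $\mathcal Q$, so that $f=\sum_{k=1}^n m_ke_{\mathcal Q}m_k^*$ and $\sum_k\|m_k\|_2^2=\Tr(f)$; then for every $u\in\mathscr U(\mathcal P)$, using $ufu^*=f$ and the identity above,
$$\sum_{j,k}\|\mathbb E_{\mathcal Q}(m_j^*um_k)\|_2^2=\sum_j\Tr\big(m_j^*(ufu^*)m_je_{\mathcal Q}\big)=\sum_j\Tr\big(m_j^*fm_je_{\mathcal Q}\big)\ge\sum_j\|m_j\|_2^4\ge\tfrac1n\Tr(f)^2>0,$$
where the first lower bound uses $\|\mathbb E_{\mathcal Q}(m_j^*m_j)\|_2\ge\|m_j^*m_j\|_1=\|m_j\|_2^2$ and the second is Cauchy--Schwarz. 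Thus $(3)$ holds with the $n^2$ pairs $(m_j^*,m_k)$ and $c=\Tr(f)^2/n$.

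The substantive implication is $(3)\Rightarrow(1)$, which I prove contrapositively by a center-of-mass argument. Assume $\mathcal P'\cap\langle\mathcal M,e_{\mathcal Q}\rangle$ has no nonzero finite-trace projection. For $\eta\in\langle\mathcal M,e_{\mathcal Q}\rangle_+\cap L^1(\Tr)\cap L^2(\Tr)$, let $\xi_0(\eta)$ be the unique element of minimal $\|\cdot\|_{2,\Tr}$-norm in the $\|\cdot\|_{2,\Tr}$-closed convex hull of $\{u\eta u^*:u\in\mathscr U(\mathcal P)\}$; it is positive, fixed by every $\Ad(u)$ with $u\in\mathscr U(\mathcal P)$, hence lies in $\mathcal P'\cap\langle\mathcal M,e_{\mathcal Q}\rangle\cap L^1(\Tr)$, so by hypothesis all its finite-trace spectral projections vanish and $\xi_0(\eta)=0$. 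Given prescribed $x_1,y_1,\ldots,x_k,y_k\in\mathcal M$, apply this with $\eta_i:=y_ie_{\mathcal Q}y_i^*$: componentwise it gives that $0$ lies in the closed convex hull of $\{(u\eta_1u^*,\ldots,u\eta_ku^*):u\in\mathscr U(\mathcal P)\}$ inside $\bigoplus_iL^2(\Tr)$; combined with the identity $\|\mathbb E_{\mathcal Q}(x_iuy_i)\|_2^2=\langle u\eta_iu^*,\,x_i^*e_{\mathcal Q}x_i\rangle_{\Tr}$, pairing against the bounded vector $(x_1^*e_{\mathcal Q}x_1,\ldots,x_k^*e_{\mathcal Q}x_k)$ shows that for every $c>0$ some convex combination, hence some single $u\in\mathscr U(\mathcal P)$, satisfies $\sum_i\|\mathbb E_{\mathcal Q}(x_iuy_i)\|_2^2<c$; this is $\neg(3)$.

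I expect the main obstacle to be the $(3)\Rightarrow(1)$ direction, and within it two points: the module-theoretic extraction of the intertwiner $(p,q,v,\Theta)$ of (1) from a finite-trace projection in $\mathcal P'\cap\langle\mathcal M,e_{\mathcal Q}\rangle$, and the bookkeeping that transfers the center-of-mass vanishing for the vectors $u\eta_iu^*$ to the actual quantities $\|\mathbb E_{\mathcal Q}(x_iuy_i)\|_2$ (in particular, making sure the relevant closures and the Ryll--Nardzewski-type selection are carried out in the correct topology on the semifinite algebra $\langle\mathcal M,e_{\mathcal Q}\rangle$). These are exactly the technical cores of \cite{Pop06}, which I would follow.
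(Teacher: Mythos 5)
Your proposal correctly reconstructs Popa's original intertwining-by-bimodules argument from \cite{Pop06}; the paper itself supplies no proof of Theorem \ref{corner} (it is simply quoted from \cite{Pop06}), and your route --- reformulating (1) as the existence of a nonzero finite-trace projection in $\mathcal P'\cap\langle\mathcal M,e_{\mathcal Q}\rangle$, the formal equivalence of (2) and (3), the Pimsner--Popa basis estimate for $(1)\Rightarrow(3)$, and the center-of-mass/convexity argument for $(3)\Rightarrow(1)$ --- is exactly the cited one. The only point needing the standard extra care is your claim of an \emph{exact} finite basis $f=\sum_k m_ke_{\mathcal Q}m_k^*$ with $m_k\in\mathcal M$: in general one only approximates $f$ in $L^2(\Tr)$ by such finite sums (the module $fL^2(\mathcal M)$ has finite $\mathcal Q$-dimension but need not be finitely generated by elements of $\mathcal M$), and the lower bound survives this approximation.
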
 
\vskip 0.02in
\noindent If one of the equivalent conditions from Theorem \ref{corner} holds, one says \emph{a corner of $ \mathcal{P}$ embeds into $ \mathcal{Q}$ inside $ \mathcal{M}$}, and writes $ \mathcal{P}\prec_{ \mathcal{M}} \mathcal{Q}$. If we moreover have that $ \mathcal{P} p'\prec_{ \mathcal{M}} \mathcal{Q}$, for any projection  $0\neq p'\in  \mathcal{P}'\cap 1_{ \mathcal{P}}  \mathcal{M} 1_{ \mathcal{P}}$, then one writes $ \mathcal{P}\prec_{ \mathcal{M}}^{\rm s} \mathcal{Q}$.
\vskip 0.05in

\noindent In the remaining part of the section we record a few technical intertwining results that will be used in the proofs of our main results. We start with the following well known result intertwining result in cocycle crossed product von Neumann algebras.

\begin{prop}\label{infindexnonint} Let $\Gamma \curvearrowright^{\alpha, \omega} \mathcal N$ be a cocycle action on a tracial von Neumann algebra and let $\mathcal M =\mathcal N\rtimes_{\alpha, \omega} \Gamma$ the corresponding cocycle crossed product von Neumann algebra. Let $T<\Gamma$ be an  infinite index subgroup and let $0\neq p\in \mathcal M$ be a projection. Then for any finite index von Neumann subalgebra $\mathcal P \subseteq p\mathcal M p$ we have $\mathcal P \nprec_{\mathcal M} \mathcal N\rtimes_{\alpha, \omega} T$. 
    
\end{prop}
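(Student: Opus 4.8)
The plan is to argue by contradiction using the group-element characterization of non-intertwining. Suppose $\mathcal P \prec_{\mathcal M} \mathcal N\rtimes_{\alpha,\omega} T$; abbreviate $\mathcal R = \mathcal N\rtimes_{\alpha,\omega} T$. Since $\mathcal P$ has finite index in $p\mathcal M p$, by Proposition \ref{findex}(5) (after passing to the II$_1$ factor / tracial setting as needed) there is a finite Pimsner--Popa basis, hence $p\mathcal M p = \sum_j \mathcal P x_j$ for finitely many $x_j$. It follows that if a net $(u_n)\subset \mathscr U(\mathcal P)$ satisfied $\|\mathbb E_{\mathcal R}(a u_n b)\|_2\to 0$ for all $a,b\in\mathcal M$, then the same would hold with $\mathscr U(\mathcal P)$ replaced by $\mathscr U(p\mathcal M p)$ (write an arbitrary unitary of $p\mathcal M p$ in terms of the basis and the $u_n$, and use the bimodule property of $\mathbb E_{\mathcal R}$ together with $\|\cdot\|_2$-continuity). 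So it suffices to disprove $p\mathcal M p \prec_{\mathcal M}\mathcal R$, and in fact, absorbing $p$, it suffices to show $\mathcal M\nprec_{\mathcal M}\mathcal R$.

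Next I would produce an explicit net in $\mathscr U(\mathcal M)$ witnessing non-intertwining via condition (2) of Theorem \ref{corner}. Enumerate a sequence $(g_n)$ of elements of $\Gamma$ lying in pairwise distinct left cosets of $T$ — possible since $[\Gamma:T]=\infty$ — and consider the canonical unitaries $v_{g_n}\in\mathcal M$ from \eqref{crossed}. For the computation, use the orthogonal decomposition $L^2(\mathcal M) = \bigoplus_{g\in\Gamma} L^2(\mathcal N) v_g$ with the conditional expectation $\mathbb E_{\mathcal R}$ acting as the projection onto $\bigoplus_{t\in T} L^2(\mathcal N) v_t$. For $x,y\in\mathcal M$ it is enough, by $\|\cdot\|_2$-density and bimodularity of $\mathbb E_{\mathcal R}$, to treat $x = a v_h$, $y = b v_k$ with $a,b\in\mathcal N$ and $h,k\in\Gamma$; then $x v_{g_n} y$ is a scalar-cocycle multiple of an element supported on the single group element $h g_n k$, so $\mathbb E_{\mathcal R}(x v_{g_n} y)=0$ as soon as $h g_n k\notin T$, i.e. $g_n\notin h^{-1}Tk^{-1}$. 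Since the $g_n$ exhaust infinitely many distinct left cosets of $T$ while $h^{-1}Tk^{-1}$ meets only finitely many of them (it is a single right coset of... more precisely a translate of $T$ from both sides, which intersects at most one left coset of $T$ — and in any case only finitely many), we get $\|\mathbb E_{\mathcal R}(x v_{g_n} y)\|_2 = 0$ for $n$ large, hence $\to 0$ for all $x,y\in\mathcal M$. By Theorem \ref{corner}, $\mathcal M\nprec_{\mathcal M}\mathcal R$, and combined with the reduction of the previous paragraph this contradicts $\mathcal P\prec_{\mathcal M}\mathcal R$.

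The one genuinely delicate point is the reduction from $\mathcal P\prec_{\mathcal M}\mathcal R$ to $p\mathcal M p\prec_{\mathcal M}\mathcal R$: it uses that finite index gives a finite right $\mathcal P$-basis of $p\mathcal M p$, which in full generality (tracial, possibly non-factorial corners) requires a little care — one may need Proposition \ref{findex}(4)--(6) to reduce to the factor case, or to invoke the Pimsner--Popa basis directly in the tracial setting. Everything else is a routine coset-counting computation in the cocycle crossed product, where the $2$-cocycle $\omega$ plays no role beyond contributing unimodular scalar factors that do not affect $\|\cdot\|_2$-norms.
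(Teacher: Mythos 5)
The paper states this proposition without proof (as ``well known''), so I am judging your argument on its own terms. Your overall strategy --- exhibit a net of group unitaries that escapes $\mathcal N\rtimes_{\alpha,\omega}T$ in the sense of Theorem \ref{corner}(2), then transfer the non-intertwining to the finite index subalgebra $\mathcal P$ --- is the right one, but both halves contain a genuine gap. For the net: choosing $(g_n)$ in pairwise distinct left cosets of $T$ does \emph{not} guarantee $hg_nk\notin T$ for large $n$. The set $h^{-1}Tk^{-1}$ is a left coset of the conjugate $kTk^{-1}$, and when $T$ is not normal it can meet infinitely many left cosets of $T$ (the number is $[T:T\cap k^{-1}Tk]$). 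Concretely, for $\Gamma=\mathbb F_2=\langle a,b\rangle$ and $T=\langle a\rangle$, the elements $g_n=a^nb^{-1}$ lie in pairwise distinct left cosets of $T$, yet $g_nb=a^n\in T$ for every $n$, so $\|\mathbb E_{\mathcal N\rtimes_{\alpha,\omega}T}(v_{g_n}v_b)\|_2=1$ for all $n$. The correct choice is $g_n\notin F_nTF_n$ for an exhaustion $(F_n)$ of $\Gamma$ by finite sets; such a sequence exists because $F_nTF_n$ is a finite union of cosets of conjugates of $T$, all of infinite index, and hence cannot cover $\Gamma$ by B.~H.~Neumann's covering lemma.

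For the reduction: what you need is $\mathcal P\prec_{\mathcal M}\mathcal R\Rightarrow p\mathcal Mp\prec_{\mathcal M}\mathcal R$ (so that $\mathcal M\nprec_{\mathcal M}\mathcal R$ contradicts the assumed intertwining of $\mathcal P$), but your parenthetical argues the converse: a decay net in $\mathscr U(\mathcal P)$ is in particular a decay net in $\mathscr U(p\mathcal Mp)$, which gives $\mathcal P\nprec_{\mathcal M}\mathcal R\Rightarrow p\mathcal Mp\nprec_{\mathcal M}\mathcal R$ and is useless here. Writing $w\in\mathscr U(p\mathcal Mp)$ as $\sum_j a_jx_j$ with $a_j\in\mathcal P$ does not repair this, since the $a_j$ are fixed non-unitary elements and the decay you control is only along the net $(u_n)$. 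The implication you need is true, and this is exactly where the finite Pimsner--Popa basis enters, but through the bimodule formulation of intertwining rather than through condition (2): if $\mathcal H\subseteq L^2(p\mathcal M)$ is a nonzero $\mathcal P$-$\mathcal R$ sub-bimodule with $\dim(\mathcal H_{\mathcal R})<\infty$, then $\overline{\sum_j x_j^*\mathcal H}$ is a nonzero $p\mathcal Mp$-$\mathcal R$ sub-bimodule which is still of finite right $\mathcal R$-dimension, whence $p\mathcal Mp\prec_{\mathcal M}\mathcal R$ and therefore $\mathcal M\prec_{\mathcal M}\mathcal R$. With these two repairs the proof goes through.
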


For further use, we will also record the following result from \cite[Proposition 2.3]{CD18}

\begin{prop}[\cite{CD18}]\label{intfinindex}Let $\mathcal N \subseteq  \M$ be II$_1$ factors such that $\mathcal N' \cap \M = \mathbb C1$. Then $\M \prec_\M \mathcal N$ if and only if $[\M : \mathcal N ] < \infty$.
    
\end{prop}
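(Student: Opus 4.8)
The plan is to derive the equivalence from Popa's reformulation of intertwining in terms of the Jones basic construction, reducing it to a short commutant computation that uses the irreducibility hypothesis. Recall from \cite{Pop06} that, for finite von Neumann algebras $\mathcal Q\subseteq\M$ and a von Neumann subalgebra $\mathcal P\subseteq\M$, one has $\mathcal P\prec_{\M}\mathcal Q$ if and only if the relative commutant $\mathcal P'\cap\langle\M,e_{\mathcal Q}\rangle$ of $\mathcal P$ inside the basic construction $\langle\M,e_{\mathcal Q}\rangle$ contains a nonzero projection of finite trace, where $\Tr$ is the canonical semifinite trace on $\langle\M,e_{\mathcal Q}\rangle$ normalized by $\Tr(e_{\mathcal Q})=1$; this is a standard repackaging of the equivalent conditions in Theorem~\ref{corner}, and is the form used in \cite{CD18}. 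I will apply it with $\mathcal P=\M$ and $\mathcal Q=\mathcal N$.

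For the forward implication, suppose $\M\prec_{\M}\mathcal N$ and choose a nonzero projection $f\in\M'\cap\langle\M,e_{\mathcal N}\rangle$ with $\Tr(f)<\infty$. Representing $\M$ in standard form on $L^2(\M)$ with modular conjugation $J$, we have $\M'=J\M J$ and $\langle\M,e_{\mathcal N}\rangle=J\mathcal N'J$ (commutants in $B(L^2(\M))$), whence
$$\M'\cap\langle\M,e_{\mathcal N}\rangle=J\M J\cap J\mathcal N'J=J(\M\cap\mathcal N')J=J(\mathcal N'\cap\M)J=\mathbb C 1,$$
the last equality being exactly the hypothesis that $\mathcal N\subseteq\M$ is irreducible. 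Hence the only projections in $\M'\cap\langle\M,e_{\mathcal N}\rangle$ are $0$ and $1$, so $f=1$, and therefore $[\M:\mathcal N]=\Tr(1)=\Tr(f)<\infty$. This is the only place where irreducibility is used.

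For the converse, if $[\M:\mathcal N]<\infty$ then $\Tr(1)=[\M:\mathcal N]<\infty$, so the identity is itself a nonzero finite-trace projection in $\M'\cap\langle\M,e_{\mathcal N}\rangle$, and the reformulation above yields $\M\prec_{\M}\mathcal N$. This direction can also be checked by hand via condition~(3) of Theorem~\ref{corner}: fixing a finite Pimsner--Popa basis $x_1,\dots,x_n\in\M$ for $\mathcal N\subseteq\M$, for any $u\in\mathscr U(\M)$ one expands $u=\sum_{i=1}^{n}\mathbb E_{\mathcal N}(ux_i^*)x_i$ and applies the triangle and Cauchy--Schwarz inequalities to get
$$1=\|u\|_2\le\sum_{i=1}^{n}\|\mathbb E_{\mathcal N}(ux_i^*)\|_2\,\|x_i\|_\infty\le\Big(\sum_{i=1}^{n}\|\mathbb E_{\mathcal N}(ux_i^*)\|_2^2\Big)^{1/2}\Big(\sum_{i=1}^{n}\|x_i\|_\infty^2\Big)^{1/2},$$
so $\sum_{i=1}^{n}\|\mathbb E_{\mathcal N}(ux_i^*)\|_2^2\ge c:=\big(\sum_{i=1}^{n}\|x_i\|_\infty^2\big)^{-1}>0$ uniformly in $u$, which is condition~(3) of Theorem~\ref{corner} (with $x_i=1$ and $y_i=x_i^*$ there).

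The only substantive inputs are Popa's basic-construction characterization of $\prec$ and the classical identification $\langle\M,e_{\mathcal N}\rangle=J\mathcal N'J$; granting these, the argument is the one-line commutant computation displayed above, so I do not anticipate a genuine obstacle. The only point requiring care is to record the normalization of $\Tr$ correctly, so that $\Tr(1)=[\M:\mathcal N]$, and to note that the irreducibility assumption is genuinely needed in the forward direction.
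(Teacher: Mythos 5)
Your proof is correct. The paper does not supply a proof of this proposition at all — it simply cites \cite{CD18} (Proposition 2.3 there) — and your argument, namely the finite-trace-projection reformulation of $\M\prec_\M\mathcal N$ inside the basic construction together with the identification $\M'\cap\langle\M,e_{\mathcal N}\rangle=J(\mathcal N'\cap\M)J=\mathbb C1$ (and the Pimsner--Popa basis estimate for the converse), is exactly the standard route taken in that reference.
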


Next, we will continue with several intertwining results that will be used in an essential way in deriving the main results in the subsequent sections. Some of them may be well-known to the experts, but for readers' convenience we decided to include complete proofs.

\begin{prop}\label{abeliansint} Let $\mathcal A \subseteq \mathcal B$ be abelian von Neumann algebras.  Then $\mathcal B\prec^{\rm s}_{\mathcal B} \mathcal A$ if and only if there exists a countable set of mutually orthogonal projections $(r_n)\subset \mathcal B$ such that $\mathcal B = \oplus_n \mathcal Ar_n$.
    
\end{prop}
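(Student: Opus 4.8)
\textbf{Proof plan for Proposition~\ref{abeliansint}.}

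The plan is to prove both implications, with the ``only if'' direction being the substantive one. For the ``if'' direction: suppose $\mathcal B = \oplus_n \mathcal A r_n$ for mutually orthogonal projections $(r_n) \subset \mathcal B$. Then the elements $r_n$ form a Pimsner--Popa basis exhibiting $\mathcal B$ as a finite ``block-sum'' over $\mathcal A$; more to the point, for each $n$ the projection $r_n$ lies in $\mathcal A' \cap \mathcal B = \mathcal B$ (everything is abelian), and $\mathcal A r_n = \mathcal B r_n$, so trivially $\mathcal B r_n \prec_{\mathcal B} \mathcal A$ via the identity map on the corner $r_n \mathcal B r_n$ with partial isometry $r_n$. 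Since the $r_n$ sum to $1$, and any nonzero projection $p' \in \mathcal B = \mathcal B' \cap \mathcal B$ satisfies $p' r_n \neq 0$ for some $n$, one gets $\mathcal B p' \prec_{\mathcal B} \mathcal A$ for every such $p'$, which is exactly $\mathcal B \prec^{\rm s}_{\mathcal B} \mathcal A$.

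For the ``only if'' direction: assume $\mathcal B \prec^{\rm s}_{\mathcal B} \mathcal A$. First I would identify $\mathcal B = L^\infty(Y,\nu)$ and $\mathcal A = L^\infty(X,\mu)$ with a measure-preserving quotient map $\pi: Y \to X$ (up to choosing a faithful normal trace, which exists since $\mathcal B$ is abelian and we may take it countably generated after a standard reduction, or argue directly with the functional calculus), so that $\mathbb E_{\mathcal A}$ is the conditional expectation along $\pi$. The key point is that $\mathcal B \prec_{\mathcal B} \mathcal A$ for abelian algebras forces, via Theorem~\ref{corner}(1), a projection $0 \neq q \in \mathcal A$ and a nonzero partial isometry $v$ with $v^*v \leq$ some projection in $\mathcal B$, $vv^* \leq q$, and $\Theta(x) v = v x$ for a $\ast$-homomorphism $\Theta$ into $q\mathcal A q$; since $\mathcal B$ is abelian, $v v^* = v^* v =: r$ is a single projection in $\mathcal B$, and the relation $\Theta(x) v = vx$ says that multiplication by $r$ intertwines $\mathcal B r$ with a subalgebra of $\mathcal A q$, which in measure-theoretic terms means $\pi$ restricted to the support of $r$ is essentially injective onto a positive-measure subset of $X$. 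Then a maximality / exhaustion argument — repeatedly applying the strong intertwining hypothesis to the ``leftover'' projection $1 - \sum_{k \leq n} r_k$ (which is legitimate precisely because of the superscript ${\rm s}$), using that $\mathcal B(1-\sum r_k) \prec_{\mathcal B}\mathcal A$ again yields a new piece on which $\pi$ is injective — produces a countable family of mutually orthogonal projections $(r_n) \subset \mathcal B$ with $\sum_n r_n = 1$ and such that $\pi$ is essentially injective on each $\operatorname{supp}(r_n)$. Essential injectivity of $\pi|_{\operatorname{supp}(r_n)}$ is equivalent to $\mathcal B r_n = \mathcal A r_n$ (pullback along an injective quotient is onto), so $\mathcal B = \oplus_n \mathcal A r_n$, as desired.

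The main obstacle I anticipate is making the exhaustion argument genuinely terminate with $\sum r_n = 1$ rather than stalling at a strictly smaller projection. This is where the strong intertwining ${\prec^{\rm s}}$ is essential rather than decorative: by definition it gives $\mathcal B p' \prec_{\mathcal B} \mathcal A$ for \emph{every} nonzero $p' \in \mathcal B' \cap \mathcal B = \mathcal B$, so at each stage the residual projection $p_n := 1 - \sum_{k \leq n} r_k$, if nonzero, still satisfies $\mathcal B p_n \prec_{\mathcal B} \mathcal A$ and hence still admits a nonzero sub-projection on which $\pi$ is essentially injective; a Zorn's lemma argument on families of such mutually orthogonal projections, maximal with respect to inclusion, then forces the supremum to be $1$ (otherwise its complement would contradict maximality), and separability/countable-decomposability of $\mathcal B$ makes the maximal family countable. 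A secondary technical point is the reduction to the measure-theoretic picture and the precise equivalence ``$\mathcal A r_n = \mathcal B r_n$ $\iff$ $\pi$ essentially injective on $\operatorname{supp} r_n$''; this is routine but should be stated carefully, perhaps phrased purely von-Neumann-algebraically as: $r_n \in \mathcal A$ is \emph{not} required, but $\mathbb E_{\mathcal A}$ restricted to $\mathcal B r_n$ is isometric, which is the operator-algebraic shadow of injectivity and is exactly what the intertwiner $v$ delivers.
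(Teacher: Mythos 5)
Your proof is correct and follows essentially the same route as the paper: both arguments extract, from the intertwining data of Theorem~\ref{corner}(1) together with commutativity of $\mathcal B$, a nonzero projection $r$ beneath any given projection with $\mathcal B r = \mathcal A r$, and then conclude by a Zorn's-lemma exhaustion in which the superscript ${\rm s}$ is exactly what rules out a nonzero residual projection; the measure-theoretic dressing via $\pi\colon Y\to X$ is harmless but unnecessary, since the paper runs the same step purely algebraically ($\Theta(x)vv^*=vxv^*=xvv^*$). One small caveat: your closing aside that $\mathcal B r_n=\mathcal A r_n$ is equivalent to $\mathbb E_{\mathcal A}$ being isometric on $\mathcal B r_n$ is false when $r_n\notin\mathcal A$ (e.g.\ $\mathcal A=L^\infty[0,1]\subseteq L^\infty([0,1]\times\{0,1\})=\mathcal B$ with $r_n$ the indicator of a section of the projection map, where $\mathbb E_{\mathcal A}$ contracts $\|\cdot\|_2$ by $1/\sqrt2$), but this remark plays no role in the actual argument.
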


\begin{proof} We will prove only the forward implication as the converse is straightforward. In this direction we first show that for every projection $0\neq q'\in \mathcal B$ there is a projection $0\neq r\in \mathcal B q'$ such that $\mathcal B r =\mathcal A r$. 

To see this, notice from assumptions, we can find nonzero projections $q\in \mathcal B q'$, $p \in \mathcal A$, a nonzero partial isometry $v\in \mathcal B$, and a unital $\ast$-isomorphism on the image $\Theta: \mathcal B q\ra \Theta(\mathcal B q)\subseteq \mathcal A p $ such that $vv^*\le p$, $v^*v\le q$, and $\Theta(x)v=vx$ for all $x\in \mathcal B q$. Since $\mathcal B$ is abelian, we further have $\Theta(x)vv^*= vxv^*= xvv^*$, for all $x\in \mathcal B q$. In particular, $\mathcal B vv^*= \Theta(\mathcal B q) vv^*\subseteq \mathcal A vv^*$. Since $\mathcal A \subseteq \mathcal B$ we further get $\mathcal B vv^*= \mathcal A vv^*$ and letting $r=vv^*\in \mathcal B$ we obtain the desired claim.
Using Zorn's Lemma there exists $(r_n)\subset \mathcal B$ a (countable) maximal (under set inclusion) family or mutually orthogonal projections such that $\mathcal B r_n= \mathcal A r_n$ for all $n$. From the claim this family is nonempty. 

Now, set $q'=1-\sum_n r_n$. If $q'\neq 0$ then one can find $0\neq r\leq q'$ such that $\mathcal B r=\mathcal A r$. Adding $r$ to the set $(r_n)$ would contradict the maximality of the latter. Hence $q'=0$ which completes our proof.  \end{proof}

\begin{prop}\label{essfinindex}  Let $\mathcal A \subseteq \mathcal N \subseteq \mathcal M$ be von Neumann algebras such that $\mathcal A\subseteq \mathcal N$ is a MASA, $\mathcal N \subseteq \mathscr N_{\mathcal M}(\mathcal A)''$, and the inclusion $\mathcal N \subseteq \mathcal M$ admits a finite Pimsner-Popa basis. Then the following hold: \begin{enumerate}
\item $\mathcal A'\cap  \mathcal M\prec^{\rm s}_{\mathcal A'\cap\mathcal M} \mathcal A $; in particular, $\mathcal A'\cap\mathcal M$ is type I;
\item If in addition $\mathcal N$ and $\mathcal M$ are II$_1$ factors then the inclusion $\mathscr Z(\mathcal A'\cap \mathcal M)\subseteq \mathcal A'\cap\mathcal M$ admits a finite Pimsner-Popa basis; and 
\item There is countable set of mutually orthogonal projections $(r_n)\subset \mathscr Z(\mathcal A'\cap\mathcal M)$ such that $\mathscr Z(\mathcal A'\cap\mathcal M) = \oplus_n \mathcal Ar_n$.
    \end{enumerate}
\end{prop}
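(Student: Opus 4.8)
Write $\mathcal{B}:=\mathcal{A}'\cap\mathcal{M}$. The plan is to start from two elementary observations. Since $\mathcal{A}\subseteq\mathcal{N}$ and $\mathbb{E}_{\mathcal{N}}$ is $\mathcal{A}$--bimodular, for $b\in\mathcal{B}$ and $a\in\mathcal{A}$ one has $a\,\mathbb{E}_{\mathcal{N}}(b)=\mathbb{E}_{\mathcal{N}}(ab)=\mathbb{E}_{\mathcal{N}}(ba)=\mathbb{E}_{\mathcal{N}}(b)\,a$, so $\mathbb{E}_{\mathcal{N}}(\mathcal{B})\subseteq\mathcal{A}'\cap\mathcal{N}=\mathcal{A}$, using that $\mathcal{A}$ is a MASA in $\mathcal{N}$; being trace preserving and the identity on $\mathcal{A}$, the restriction $\mathbb{E}_{\mathcal{N}}|_{\mathcal{B}}$ is therefore exactly the $\tau$--preserving conditional expectation $\mathcal{B}\to\mathcal{A}$. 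Consequently, if $c>0$ is the Pimsner--Popa constant of $\mathcal{N}\subseteq\mathcal{M}$ (finite because that inclusion admits a finite Pimsner--Popa basis), then $\|\mathbb{E}_{\mathcal{A}}(b)\|_2^2=\|\mathbb{E}_{\mathcal{N}}(b)\|_2^2\geq c\,\|b\|_2^2$ for all $b\in\mathcal{B}_+$, whence $[\mathcal{B}:\mathcal{A}]\leq[\mathcal{M}:\mathcal{N}]<\infty$. Secondly, $\mathcal{A}\subseteq\mathscr{Z}(\mathcal{B})$ because every element of $\mathcal{B}$ commutes with $\mathcal{A}$, and for $u\in\mathscr{N}_{\mathcal{M}}(\mathcal{A})$ we have $u\mathcal{B}u^{*}=(u\mathcal{A}u^{*})'\cap\mathcal{M}=\mathcal{B}$, so $\mathscr{N}_{\mathcal{M}}(\mathcal{A})$ also normalizes $\mathcal{B}$ and $\mathscr{Z}(\mathcal{B})$; via the standing hypothesis this gives $\mathcal{N}\subseteq\mathscr{N}_{\mathcal{M}}(\mathscr{Z}(\mathcal{B}))''$.

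Next I would resolve the structure of $\mathcal{B}$ through the two finite--index steps $\mathcal{A}\subseteq\mathscr{Z}(\mathcal{B})\subseteq\mathcal{B}$, both finite by Proposition~\ref{findex}(6). The inclusion $\mathcal{A}\subseteq\mathscr{Z}(\mathcal{B})$ is one of abelian algebras of finite Pimsner--Popa index; disintegrating $\mathscr{Z}(\mathcal{B})$ over $\mathcal{A}$, positivity of the Pimsner--Popa constant forces almost every fibre to be atomic with a uniformly bounded number of atoms of uniformly bounded--below mass, and a measurable selection then yields mutually orthogonal projections $(r_{n})\subset\mathscr{Z}(\mathcal{B})$ with $\mathscr{Z}(\mathcal{B})=\bigoplus_{n}\mathcal{A}r_{n}$ --- this is conclusion~(3) (equivalently, $\mathscr{Z}(\mathcal{B})\prec^{\rm s}_{\mathscr{Z}(\mathcal{B})}\mathcal{A}$ via Proposition~\ref{abeliansint}), and in particular $\mathcal{A}\subseteq\mathscr{Z}(\mathcal{B})$ has a finite Pimsner--Popa basis assembled from the $r_{n}$'s. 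The inclusion $\mathscr{Z}(\mathcal{B})\subseteq\mathcal{B}$ also has finite Pimsner--Popa index, i.e.\ $\mathcal{B}$ has finite index over its own centre; a central disintegration then shows the fibres of $\mathcal{B}$ are matrix algebras of uniformly bounded size, so $\mathcal{B}\cong\bigoplus_{d}\mathcal{Z}_{d}\otimes\mathbb{M}_{d}(\mathbb{C})$ with finitely many $d$ and $\mathcal{Z}_{d}$ abelian. In particular $\mathcal{A}'\cap\mathcal{M}$ is type~I, and the (rescaled) matrix units form a finite Pimsner--Popa basis for $\mathscr{Z}(\mathcal{B})\subseteq\mathcal{B}$, which is conclusion~(2) (the hypothesis that $\mathcal{N},\mathcal{M}$ are $\mathrm{II}_{1}$ factors being used here only to keep the relevant indices and ambient basic constructions finite). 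Composing the two bases produces a finite Pimsner--Popa basis $b_{1},\dots,b_{m}$ for $\mathcal{A}\subseteq\mathcal{B}$; then for a projection $z\in\mathscr{Z}(\mathcal{B})$ and $w\in\mathscr{U}(\mathcal{B}z)$, expanding $w=\sum_{i}\mathbb{E}_{\mathcal{A}}(wb_{i}^{*})b_{i}$ and applying Cauchy--Schwarz gives a uniform lower bound $\sum_{i}\|\mathbb{E}_{\mathcal{A}}(wb_{i}^{*})\|_{2}^{2}\geq c'\tau(z)>0$, so $\mathcal{B}z\prec_{\mathcal{B}}\mathcal{A}$ by Popa's criterion (Theorem~\ref{corner}(3)). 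Hence $\mathcal{A}'\cap\mathcal{M}\prec^{\rm s}_{\mathcal{A}'\cap\mathcal{M}}\mathcal{A}$, which is conclusion~(1); alternatively, ``type~I'' is already forced by this strong intertwining, since no corner of a $\mathrm{II}_{1}$ direct summand of $\mathcal{B}$ can embed into the abelian algebra $\mathcal{A}$.

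I expect the main obstacle to be the passage from a merely finite Pimsner--Popa \emph{constant} to the \emph{explicit} decompositions above: that finite index over an abelian algebra forces uniformly bounded atomic multiplicity, and finite index over the centre forces uniformly bounded matrix degree. For general tracial inclusions a finite Pimsner--Popa index need not come from a finite Pimsner--Popa basis, so one must use carefully that here $\mathcal{A}$ is abelian and contained in the centre of $\mathcal{A}'\cap\mathcal{M}$ (and, in the applications, that the normalizer hypothesis is exactly what places one in this controlled, locally homogeneous situation). Once the disintegration analysis is set up, the remaining steps are routine bookkeeping with Popa's intertwining criterion and Proposition~\ref{findex}.
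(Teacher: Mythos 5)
Your argument is correct in substance and reaches all three conclusions, but it takes a genuinely different route from the paper for part (1). The paper proves the intertwining \emph{first}: it uses the normalizer hypothesis to form $\mathcal Q=\{\mathcal G\,\mathscr U(\mathcal A'\cap\mathcal M)\}''$ with $\mathcal G\subseteq\mathscr N_{\mathcal M}(\mathcal A)$, approximates a Pimsner--Popa basis of $\mathcal N\subseteq\mathcal Q$ in $\|\cdot\|_2$ by finite sums from $\mathscr U(\mathcal A'\cap\mathcal M)\mathcal G$, and then uses the commuting square to verify condition (3) of Theorem \ref{corner} directly; type I is deduced \emph{from} the intertwining via central carriers and the comparison theorem, and (2), (3) follow from Propositions \ref{typeI}, \ref{abeliansint} and \cite[Proposition 3.2]{CFQT24}. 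You instead extract from the same commuting square the quantitative statement that the Pimsner--Popa constant of $\mathcal A\subseteq\mathcal A'\cap\mathcal M$ is bounded below by that of $\mathcal N\subseteq\mathcal M$, run the whole structure theory (type I decomposition with uniformly bounded matrix sizes, atomic fibres of $\mathcal A\subseteq\mathscr Z(\mathcal A'\cap\mathcal M)$), and only then read off the intertwining. What your route buys: it never uses the hypothesis $\mathcal N\subseteq\mathscr N_{\mathcal M}(\mathcal A)''$ or the approximation argument, it gives finitely many $r_n$ in (3), and it shows factoriality of $\mathcal N,\mathcal M$ is not really needed in (2). What it costs is one input you assert without justification: that a finite Pimsner--Popa basis for a general (non-factor) tracial inclusion forces a \emph{positive} Pimsner--Popa constant. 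This is true --- it is the Pimsner--Popa inequality $\mathbb E_{\mathcal N}(x)\ge\|\sum_i x_i^*x_i\|^{-1}x$ for $x\in\mathcal M_+$, available in \cite{Pop95} and in Watatani's index theory --- but it is not among the items collected in Proposition \ref{findex}, it is not completely formal, and your entire alternative route for the non-factor case hinges on it; you should state and cite it explicitly.

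Two smaller points. First, the product of an orthonormal basis for $\mathcal A\subseteq\mathscr Z(\mathcal A'\cap\mathcal M)$ with one for $\mathscr Z(\mathcal A'\cap\mathcal M)\subseteq\mathcal A'\cap\mathcal M$ is a finite generating set but need not be $\mathcal A$-orthonormal, so the identity $w=\sum_i\mathbb E_{\mathcal A}(wb_i^*)b_i$ is not automatic; either re-orthogonalize (Gram--Schmidt over $\mathcal A$) or intertwine in two steps, $\mathcal B z\prec_{\mathcal B}\mathscr Z(\mathcal B)$ and $\mathscr Z(\mathcal B)\prec^{\rm s}\mathcal A$, and compose as the paper does in part (3). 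Second, the projections $r_n$ obtained from the fibrewise atoms require a measurable selection to be well defined; this is routine but should be said, and is essentially what Propositions \ref{typeI} and \ref{abeliansint} are packaging for you.
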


\begin{proof} 1. Let $\mathcal G \leqslant \mathscr N_{\mathcal M}(\mathcal A)$ such that $\mathcal G''=\mathcal N$. Since $\mathcal G$ normalizes $\mathcal A$ then it also normalizes $\mathcal A '\cap \mathcal M$. Let $\mathcal Q := \{\mathcal G \mathscr U(\mathcal A'\cap \mathcal M)\}''$. 
Notice that $\mathcal N \subseteq \mathcal Q \subseteq\mathcal M$ and since $\mathcal N\subseteq \mathcal M$ has a finite Pimsner-Popa basis then so does $\mathcal N\subseteq \mathcal Q$. Pick $x_1,\ldots ,x_s\in \mathcal Q$ such a basis and notice that for every $x\in \mathcal Q$ we have that $x=\sum_i \mathbb E_{\mathcal N }(xx_i^*)x_i$. Approximating in $\|\cdot \|_2$ each element $x_i$ by finite sums in $ \mathscr U(\mathcal A'\cap\mathcal M) \mathcal G$ and using the previous formula we get that for every $\varepsilon>0$ there are finitely many $u_j \in \mathscr U(\mathcal A'\cap \mathcal M)$ and $g_j\in \mathcal G$ and $c>0$ such that for all $x\in (\mathcal A'\cap\mathcal M)_1$ we have 

\begin{equation*}\begin{split}
 \|x\|^2_2-\varepsilon \leq c \sum_j \|\mathbb E_{\mathcal N }(x u_j g_j) \|^2_2  \leq c\sum_j \|\mathbb E_{\mathcal N }(x u_j )\|^2_2
 = c\sum_j \|\mathbb E_{\mathcal A }(x u_j )\|^2_2.\end{split}
\end{equation*}
In the last inequality above we used the commuting square property from \cite[Theorem 4.3.1]{GdlHJ96}. By Popa's intertwining techniques, this further implies that  $\mathcal A'\cap  \mathcal M\prec^s_{\mathcal A'\cap\mathcal M} \mathcal A $. 

For the remaining part, notice this intertwining further implies that for every projection $0\neq p\in \mathcal A'\cap \mathcal M$ there exists an abelian  projection $0\neq r\in \mathcal A$ such that $r\leq z(p)$, where $z(p)$ is the central carrier of $p$ inside $\mathcal A'\cap\mathcal M$. Therefore $z(r)\leq z(p)$ and hence $0\neq z(r)=z(r)z(p)$. By Comparison Theorem this further yields that $r$ and $p$ admit nonzero von Neumann equivalent subprojections. Since being abelian is preserved under both von Neumann equivalence and taking subprojections we conclude that $p$ has a nonzero, abelian subprojection. As $p$ was arbitrary it follows that $\mathcal A'\cap\mathcal M$ is type I. 

\noindent 2. This follows directly from Proposition \ref{typeI}.

\noindent 3. Part 1.\ clearly implies that $\mathscr Z(\mathcal A'\cap\mathcal M)\prec^{\rm s}_{\mathcal A'\cap\mathcal M} \mathcal A$. Combining this with \cite[Proposition 3.2]{CFQT24} we further have that $\mathscr Z(\mathcal A'\cap\mathcal M)\prec^{\rm s}_{\mathscr Z(\mathcal A'\cap\mathcal M)} \mathcal A$. Thus, the conclusion follows from Proposition \ref{abeliansint}.\end{proof}

\begin{prop}\label{typeI} Let $(\mathcal M, \tr)$  be  a tracial von Neumann algebra such that $\mathscr Z(\mathcal M)\subseteq \mathcal M$ has finite index. Then $\mathcal M$ is type I and there exists a finite Pimsner-Popa basis for the inclusion 
$\mathscr Z(\mathcal M)\subseteq \mathcal M$.

\end{prop}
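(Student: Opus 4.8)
The plan is to reduce everything to a statement about how the finite Pimsner-Popa basis interacts with the center, and then to use a standard disintegration / direct-integral argument over $\mathscr Z(\mathcal M)$. First I would observe that by Proposition \ref{findex}(5) (applied fiberwise, or rather by the probabilistic Pimsner-Popa characterization) the hypothesis $[\mathcal M:\mathscr Z(\mathcal M)]<\infty$ means there is a constant $c>0$ with $\|\mathbb E_{\mathscr Z(\mathcal M)}(x)\|_2^2\ge c\|x\|_2^2$ for all $x\in\mathcal M_+$. Writing $(\mathcal M,\tr)=\int^{\oplus}(\mathcal M_\omega,\tr_\omega)\,d\mu(\omega)$ as a direct integral over the spectrum of $\mathscr Z(\mathcal M)=L^\infty(\Omega,\mu)$, the conditional expectation $\mathbb E_{\mathscr Z(\mathcal M)}$ is exactly the field of traces $x\mapsto(\tr_\omega(x_\omega))_\omega$. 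Hence the index bound forces $\tr_\omega(x_\omega)^2\ge c\|x_\omega\|_{2,\omega}^2$ for $\mu$-a.e.\ $\omega$ and all $x_\omega\in(\mathcal M_\omega)_+$; that is, each fiber $\mathcal M_\omega$ is a finite von Neumann algebra whose trace dominates the $2$-norm up to the fixed constant $c$. A finite von Neumann algebra with this property must be finite-dimensional with uniformly bounded dimension: indeed $\mathcal M_\omega$ has no diffuse part (a diffuse abelian subalgebra would contain projections of arbitrarily small trace, violating the bound with $x$ the corresponding projection), so $\mathcal M_\omega$ is atomic, and counting matrix units shows $\dim\mathcal M_\omega\le c^{-1}$. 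In particular each $\mathcal M_\omega$ is type I, so $\mathcal M$ is type I.

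For the Pimsner-Popa basis, the idea is to build it fiberwise and measurably. Since $\dim\mathcal M_\omega\le N:=\lfloor c^{-1}\rfloor$ is uniformly bounded, I would partition $\Omega$ into measurable pieces on which the fiber $\mathcal M_\omega$ has a fixed Wedderburn type $\bigoplus_{k}\mathbb M_{n_k}(\mathbb C)$ (finitely many possibilities), and on each such piece measurably choose a field of systems of matrix units $(e^{(k)}_{ij}(\omega))$. Rescaling these by the appropriate positive central element so that the pieces $\{e^{(k)}_{ij}\}$ become $\mathscr Z(\mathcal M)$-orthogonal (this is where one uses that the $n_k$ and the fiber traces are measurable functions bounded away from $0$ and $\infty$ on each piece), and patching over the finitely many pieces, yields finitely many elements $x_1,\dots,x_m\in\mathcal M$ that are $\mathscr Z(\mathcal M)$-orthogonal with $\mathcal M=\sum_i\mathscr Z(\mathcal M)x_i$, i.e.\ a finite Pimsner-Popa basis for $\mathscr Z(\mathcal M)\subseteq\mathcal M$. (Alternatively, one can invoke the abstract structure theory of type I finite von Neumann algebras with finite-dimensional fibers of bounded dimension — such an algebra is a finite direct sum of algebras of the form $M\otimes\mathbb M_n(\mathbb C)$ with $M$ abelian — and read off the basis from the matrix units there; I would present whichever route is shorter given the conventions already fixed in the paper.)

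The main obstacle I anticipate is the measurable selection step: ensuring that the field of matrix units, and the central rescaling that turns them into a genuine $\mathscr Z(\mathcal M)$-orthogonal basis, can be chosen measurably and globally rather than just $\mu$-a.e.\ pointwise. This is handled by the standard measurable-choice results for direct integrals of von Neumann algebras (the fibers live in a Polish space of finite-dimensional algebras of dimension $\le N$, and "has Wedderburn type $\tau$" is a Borel condition), together with the uniform bounds from the index inequality which prevent any degeneration of the normalizing constants. Everything else is routine bookkeeping: that $\mathscr Z(\mathcal M)$-orthogonality plus spanning gives the Pimsner-Popa property is immediate from the defining formula $x=\sum_i\mathbb E_{\mathscr Z(\mathcal M)}(xx_i^*)x_i$, and that $\mathcal M$ is type I was already obtained from the no-diffuse-part argument above.
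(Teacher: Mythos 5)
Your proposal is essentially correct, but it takes a genuinely different route from the paper. You disintegrate $\mathcal M$ over its center and argue fiberwise: the Pimsner--Popa inequality forces each fiber trace to satisfy $\tr_\omega(p)\ge c$ on nonzero projections, so each fiber is finite-dimensional of uniformly bounded dimension, and the basis is assembled by measurable choice of matrix units. The paper instead works globally and never disintegrates: it splits $\mathcal M=\mathcal Mr\oplus\mathcal Mq$ into type I and type II parts, kills the type II part by choosing $n$ equivalent orthogonal projections summing to $q$ (so that $n\,\mathbb E_{\mathscr Z(\mathcal M)}(q_i)=\mathbb E_{\mathscr Z(\mathcal M)}(q)$, which combined with the index inequality gives $\tr(q)\le \frac{1}{cn}\tr(\mathbb E_{\mathscr Z(\mathcal M)}(q)^2)\to 0$), then invokes the structure theorem $\mathcal M\cong\oplus_i\mathcal A_i\otimes\mathbb M_{n_i}(\mathbb C)$ and rules out unbounded $n_i$ by testing the inequality on projections $z_i\otimes p_i$ with $\tr(p_i)=1/n_i$. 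What the paper's route buys is the complete avoidance of your anticipated obstacle: there is no measurable-selection step at all, and no implicit separability assumption (direct integral theory, and the passage from ``for all $x$, a.e.\ $\omega$'' to ``a.e.\ $\omega$, for all $x_\omega$'', both require a separable predual, which the proposition does not assume). What your route buys is a sharper local picture (uniform bound on the fiber dimensions) that the paper only gets implicitly. Two small quibbles: your dimension bound should be $\dim\mathcal M_\omega\le c^{-2}$ rather than $c^{-1}$ (if $\mathcal M_\omega=\oplus_k\mathbb M_{n_k}(\mathbb C)$ with weights $\lambda_k$, minimal projections have trace $\lambda_k/n_k\ge c$, giving $\sum_k n_k\le c^{-1}$ and hence $\sum_k n_k^2\le c^{-2}$), and you should note that a measurable field of matrix units is already $\mathscr Z(\mathcal M)$-orthogonal, so the ``rescaling'' is only a normalization, not an orthogonalization. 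Neither affects the validity of the argument in the separable case.
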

\begin{proof}
From the assumption there exists a constant $c>0$ such that $\|\mathbb E_{\mathscr Z(\mathcal M)}(x)\|^2_2\geq c\|x\|^2_2$ for all $x\in \mathcal M_+$. 
This further implies that for any projection $p\in \mathcal M$ we have \begin{equation}\label{finindexineq1}\tr (\mathbb E_{\mathscr Z(\mathcal M)}(p) \mathbb E_{\mathscr Z(\mathcal M)}(p) ) \geq c\tr(p)= c \tr(\mathbb E_{\mathscr Z(\mathcal M)}(p) ).\end{equation}

\noindent Notice that $\mathcal M= \mathcal M r \oplus \mathcal M q$, for $r,q\in \mathscr Z(\mathcal M)$ with $r+q=1$ where $\mathcal M r$ is type I and $\mathcal Mq$ is type II. Since $\mathcal M q$ is type II, for every $n\in\mathbb N$ one can find mutually orthogonal, von Neumann equivalent projections $q_i\in\mathcal M q$ for $1\leq i\leq n$ such that $q=q_1+\cdots +q_n$. Fixing $1\leq i\leq n$ we see that $n \mathbb E_{\mathscr Z(\mathcal M)}(q_i) = \mathbb E_{\mathscr Z(\mathcal M)}(q)$. 
Using this together with  inequality \eqref{finindexineq1} for $p=q_i$ we see that
\begin{equation*}
    0\leq  \tr(q)=\tr(\mathbb E_{\mathscr Z(\mathcal M)}(q) )=n \tr(\mathbb E_{\mathscr Z(\mathcal M)}(q_i))\leq \frac{n}{c} \tr (\mathbb E_{\mathscr Z(\mathcal M)}(q_i) \mathbb E_{\mathscr Z(\mathcal M)}(q_i) ) = \frac{1}{cn}\tr (\mathbb E_{\mathscr Z(\mathcal M)}(q) \mathbb E_{\mathscr Z(\mathcal M)}(q) ).
\end{equation*}
Letting $n\nearrow \infty$ above and using the faithfulness of $\tr$ we get $q=0$. Hence  $\mathcal M$ is type I. 
\vskip 0.04in
Therefore, after an isomorphism we can assume that $\mathcal M= \oplus_i \mathcal A_i\otimes \mathbb M_{n_i}(\mathbb C)$ where  $\mathcal A_i$ is abelian and $(n_i)$ is a sequence of of distinct positive integers. Next we show $(n_i)$ is finite. Assume by contradiction, $(n_i)$ is infinite. Thus we can assume that $n_i\nearrow \infty$ as $i \nearrow \infty$. Thus for every $i$ there is a nonzero projection $z_i\otimes p_i\in \mathcal A_i \otimes \mathbb M_{n_i}(\mathbb C)$ where $\tr(p_i)=1/n_i$. Since $\mathscr Z(\mathcal M)=\oplus_i \mathcal  A_i$ we get that 

\begin{equation*}
    \frac{\|z_i\|_2^2}{n^2_i}=\|z_i \otimes \mathbb E_{\mathcal A_i\otimes 1}(1\otimes p_i)\|^2_2=\|\mathbb E_{\mathscr Z(\mathcal M)}(z_i\otimes p_i)\|^2_2\geq c\|z_i\otimes p_i\|_2^2=c\|z_i\|_2^2\| p_i\|_2^2=\frac{c\|z_i\|_2^2}{n_i}.\end{equation*}
However, this leads to a contradiction when $n_i$ is sufficiently large. 
Since $(n_i)$ is finite, using the structure of $\mathcal M$ we have that $\mathscr Z(\mathcal M)\subseteq \mathcal M$ has a finite Pimsner-Popa basis. 
\end{proof}

\begin{prop}\label{finiteindexnorm} Let $\mathcal A\subseteq \mathcal N\subseteq \mathcal M$ be tracial von Neumann algebras where $\mathcal N\subseteq \mathcal M$ is a finite index inclusion of II$_1$ factors  and $\mathcal A \subseteq \mathcal N$ is a Cartan subalgebra. Consider a von Neumann subalgebra $\mathcal A \subseteq \mathcal B \subseteq \mathcal A'\cap \mathcal M$ such that $\mathcal B\subseteq \mathcal M$ is a MASA. Then the following hold:
\begin{enumerate}
\item The inclusion $\mathcal A \subseteq \mathcal A'\cap \mathcal M$ has finite index and  $\mathcal A'\cap \mathcal M$ is a type I. Moreover for every $\varepsilon>0$ there is $z \in \mathscr Z(\mathcal A'\cap\mathcal M)$ with $\tr(z)\geq 1-\varepsilon$ such that the inclusion $\mathcal A z\subseteq( \mathcal A'\cap \mathcal M) z$ admits a finite Pimsner-Popa basis. 

\item The center is completely atomic, i.e. $\mathscr Z(\mathscr N_{\mathcal M}(\mathcal B)'') = \oplus_n \mathbb C z_n$. Also for every $n\in \mathbb N$ we have that $\mathscr N_{\mathcal M}(\mathcal B)''z_n \subseteq z_n \mathcal M z_n$ is an irreducible, finite index inclusion of II$_1$ factors.  
\end{enumerate}
    
\end{prop}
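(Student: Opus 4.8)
The plan is to deduce part~(1) from Proposition~\ref{essfinindex} together with one elementary remark about conditional expectations, and then to bootstrap part~(2) out of part~(1) by exploiting the type~I structure of $\mathcal A'\cap\mathcal M$.

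For (1), I would apply Proposition~\ref{essfinindex} to the chain $\mathcal A\subseteq\mathcal N\subseteq\mathcal M$; its hypotheses hold since $\mathcal A\subseteq\mathcal N$ is a MASA (being Cartan), $\mathcal N=\mathscr N_{\mathcal N}(\mathcal A)''\subseteq\mathscr N_{\mathcal M}(\mathcal A)''$, and $\mathcal N\subseteq\mathcal M$ is a finite-index inclusion of II$_1$ factors, hence admits a finite Pimsner--Popa basis by Proposition~\ref{findex}(5). This already gives that $\mathcal A'\cap\mathcal M$ is type~I, that $\mathscr Z(\mathcal A'\cap\mathcal M)\subseteq\mathcal A'\cap\mathcal M$ has a finite Pimsner--Popa basis, and that $\mathscr Z(\mathcal A'\cap\mathcal M)=\bigoplus_n\mathcal A r_n$ for a countable orthogonal family $(r_n)$ in $\mathscr Z(\mathcal A'\cap\mathcal M)$. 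The only additional input is the finiteness of $[\mathcal A'\cap\mathcal M:\mathcal A]$: since $\mathcal A\subseteq\mathcal N$, the trace-preserving conditional expectation $\mathbb E_{\mathcal N}\colon\mathcal M\to\mathcal N$ is $\mathcal A$-bimodular, so it carries $\mathcal A'\cap\mathcal M$ into $\mathcal A'\cap\mathcal N=\mathcal A$ and therefore restricts to the trace-preserving conditional expectation of $\mathcal A'\cap\mathcal M$ onto $\mathcal A$; feeding positive elements of $\mathcal A'\cap\mathcal M$ into the Pimsner--Popa index inequality for $\mathcal N\subseteq\mathcal M$ then yields $[\mathcal A'\cap\mathcal M:\mathcal A]\le[\mathcal M:\mathcal N]<\infty$. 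For the ``moreover'' I would truncate: given $\varepsilon>0$, pick $N$ with $\tr(\sum_{n\le N}r_n)\ge 1-\varepsilon$ and set $z=\sum_{n\le N}r_n\in\mathscr Z(\mathcal A'\cap\mathcal M)$; then $\mathcal A z\subseteq\mathscr Z(\mathcal A'\cap\mathcal M)z=\bigoplus_{n\le N}\mathcal A r_n$ is a finite-index inclusion of abelian algebras, hence admits a finite Pimsner--Popa basis, and composing it with the restriction to $z$ of the basis of $\mathscr Z(\mathcal A'\cap\mathcal M)\subseteq\mathcal A'\cap\mathcal M$ produces one for $\mathcal A z\subseteq(\mathcal A'\cap\mathcal M)z$. (In fact $[\mathscr Z(\mathcal A'\cap\mathcal M):\mathcal A]<\infty$ by Proposition~\ref{findex}(6), so $(r_n)$ is already finite and $z=1$ works; the $\varepsilon$-version is the conservative one.)

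For (2), write $\mathcal P:=\mathscr N_{\mathcal M}(\mathcal B)''$. The heart of the argument will be to prove $\mathcal N\subseteq\mathcal P$, from which $[\mathcal M:\mathcal P]\le[\mathcal M:\mathcal N]<\infty$ by Proposition~\ref{findex}(6). First note that $\mathcal B$ is a MASA of $\mathcal A'\cap\mathcal M$ (it lies between $\mathcal A$ and $\mathcal A'\cap\mathcal M$ and is a MASA of $\mathcal M$), while $\mathcal A'\cap\mathcal M$ is a finite type~I von Neumann algebra by (1). I would invoke two standard structural facts about a finite type~I von Neumann algebra $\mathcal R$, both read off its decomposition $\mathcal R\cong\bigoplus_i\mathcal Z_i\otimes\mathbb M_{n_i}(\mathbb C)$: every MASA of $\mathcal R$ is regular in $\mathcal R$, and any two MASAs of $\mathcal R$ are conjugate by a unitary of $\mathcal R$. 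The first gives $\mathcal A'\cap\mathcal M=\mathscr N_{\mathcal A'\cap\mathcal M}(\mathcal B)''\subseteq\mathcal P$. For the inclusion $\mathcal N\subseteq\mathcal P$: take any $v\in\mathscr N_{\mathcal M}(\mathcal A)$; then $v$ normalizes $\mathcal A'\cap\mathcal M$, so $v\mathcal B v^*$ is again a MASA of $\mathcal A'\cap\mathcal M$, and by the second fact there is $w\in\mathscr U(\mathcal A'\cap\mathcal M)$ with $w(v\mathcal B v^*)w^*=\mathcal B$, i.e.\ $wv\in\mathscr N_{\mathcal M}(\mathcal B)\subseteq\mathcal P$; since also $w\in\mathcal A'\cap\mathcal M\subseteq\mathcal P$, we get $v\in\mathcal P$. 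Hence $\mathscr N_{\mathcal M}(\mathcal A)\subseteq\mathcal P$, and since $\mathcal A$ is Cartan in $\mathcal N$ we obtain $\mathcal N=\mathscr N_{\mathcal N}(\mathcal A)''\subseteq\mathcal P$.

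With $[\mathcal M:\mathcal P]<\infty$ in hand, the rest is routine. Proposition~\ref{findex}(4) applied to $\mathcal P\subseteq\mathcal M$ (with $\mathscr Z(\mathcal M)=\mathbb C$) shows $\mathscr Z(\mathcal P)$ is completely atomic, say $\mathscr Z(\mathcal P)=\bigoplus_n\mathbb C z_n$. Since $\mathcal B\subseteq\mathcal P$ and $\mathcal B$ is a MASA of $\mathcal M$, we have $\mathcal P'\cap\mathcal M\subseteq\mathcal B'\cap\mathcal M=\mathcal B\subseteq\mathcal P$, hence $\mathcal P'\cap\mathcal M=\mathscr Z(\mathcal P)$. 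For each $n$, $z_n$ is a minimal projection of $\mathscr Z(\mathcal P)$, so $\mathcal P z_n$ is a factor with $(\mathcal P z_n)'\cap z_n\mathcal M z_n=z_n(\mathcal P'\cap\mathcal M)z_n=\mathbb C z_n$, i.e.\ $\mathcal P z_n\subseteq z_n\mathcal M z_n$ is irreducible; it has finite index by Proposition~\ref{findex}(1) applied to the projection $z_n\in\mathcal P$. Finally $z_n\in\mathscr Z(\mathcal P)\subseteq\mathcal B$, so $\mathcal B z_n$ is a diffuse MASA of the II$_1$ factor $z_n\mathcal M z_n$ sitting inside $\mathcal P z_n$; thus $\mathcal P z_n$ is infinite-dimensional, and being a finite factor it must be of type~II$_1$ (as is $z_n\mathcal M z_n$). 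I expect the one genuinely substantive point to be the inclusion $\mathcal N\subseteq\mathcal P$, i.e.\ transferring the finite index of $\mathcal N\subseteq\mathcal M$ over to $\mathcal P\subseteq\mathcal M$; every remaining assertion reduces to part~(1) and to the facts collected in Proposition~\ref{findex}.
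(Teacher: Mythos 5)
Your argument is correct, and for part (2) it takes a genuinely different — and in fact stronger — route than the paper. The paper also establishes the containment $\mathscr N_{\mathcal M}(\mathcal A)\subseteq \mathscr U(\mathcal A'\cap\mathcal M)\,\mathscr N_{\mathcal M}(\mathcal B)$ via Dixmier's unitary conjugacy of MASAs in type~I algebras, exactly as you do, but it does \emph{not} observe that $\mathcal A'\cap\mathcal M\subseteq \mathscr N_{\mathcal M}(\mathcal B)''=:\mathcal Q$ (via regularity of MASAs in finite type~I algebras); consequently it cannot conclude $\mathcal N\subseteq\mathcal Q$ outright. Instead it shows that $L^2(q\mathcal Mq)$ is a finitely generated right $q\mathcal Qq$-module on corners $q$ of trace $\geq 1-\varepsilon$ (combining the Pimsner--Popa bases for $\mathcal Aq\subseteq(\mathcal A'\cap\mathcal M)q$ and $q\mathcal Nq\subseteq q\mathcal Mq$), deduces the intertwining $q\mathcal Mq\prec q\mathcal Qq$ via \cite[Lemma 3.5]{FGS11}, passes to relative commutants to get that $\mathscr Z(\mathcal Q)$ is completely atomic, and finally invokes Proposition~\ref{intfinindex} to get finite index of $\mathcal Qz_n\subseteq z_n\mathcal Mz_n$. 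Your honest inclusion $\mathcal N\subseteq\mathcal Q$ short-circuits all of this: finite index and atomicity of the center then drop out of Proposition~\ref{findex} alone, with no intertwining theory needed. The extra ingredient you use — that a MASA of a finite type~I von Neumann algebra is regular in it — is standard (conjugate to $\bigoplus_i\mathcal Z_i\otimes\mathbb D_{n_i}$ by the same Dixmier theorem the paper already cites), so nothing is lost. Your part (1) is also sound, and the observation that $\mathbb E_{\mathcal N}$ restricts to $\mathbb E_{\mathcal A}$ on $\mathcal A'\cap\mathcal M$ actually supplies the finite-index assertion more explicitly than the paper's bare citation of Proposition~\ref{essfinindex}. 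One small caveat: your parenthetical claim that $(r_n)$ must be finite so that $z=1$ works is not justified — the maximal family produced by Zorn's lemma in Proposition~\ref{abeliansint} can be infinite even for a finite-index (indeed trivial) abelian inclusion, since one may always over-refine the projections — but your main line correctly uses the $\varepsilon$-truncation, which matches what the statement requires.
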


\begin{proof} 1. This follows directly from Proposition \ref{essfinindex}. 

\noindent 2. Next we prove the following 

 \begin{equation}\label{cont1}\mathscr N_{\mathcal M}(\mathcal A)\subseteq \mathscr N_{\mathcal M}( \mathcal A'\cap\mathcal M)\subseteq \mathscr U(\mathcal A'\cap\mathcal M) \mathscr N_{\mathcal M}(\mathcal B).\end{equation}
    
\noindent The first inclusion follows directly from definitions. To see the second containment,  fix $u\in \mathscr N_{\mathcal M}(\mathcal A'\cap\mathcal M)$. Therefore, both $u\mathcal B u^*$ and $\mathcal B$ are MASAs in $\mathcal A'\cap \mathcal M $. Since by \cite[Theorem 6.5.5]{Dix81} in a type I von Neumann algebra any two MASAs are unitarily conjugate, one can find $v\in  \mathscr U(\mathcal A'\cap\mathcal M)$ such that $u\mathcal B u^*=v\mathcal B v^*$. Hence $v^*u\mathcal B u^*v=\mathcal B$ and thus $w= v^*u \in \mathscr N_{\mathcal M}(\mathcal B)$. As $u= vw$, we get the desired claim. 

Denote by $\mathcal Q= \mathscr N_{\mathcal M}(\mathcal B)''$.    As $\mathcal A'\cap \mathcal M$ is type I, for every $\varepsilon>0$ one can find a projection $q\in \mathscr Z(\mathcal A'\cap \mathcal M) $ such that $\tr(q)>1-\varepsilon$ and the inclusion $\mathcal A q\subseteq (\mathcal A'\cap\mathcal M) q$ admits a finite Pimsner-Popa basis $x_1, \ldots, x_n\in  (\mathcal A'\cap\mathcal M) q$; in other words, $(\mathcal A'\cap\mathcal M )q = \sum^n_{i=1} x_i \mathcal A q$. This together with the containment \eqref{cont1} further implies that \begin{equation}\label{fingen2}
    L^2(q\mathcal N q)=L^2(q \mathscr N_{\mathcal M}(\mathcal A)'' q)\subseteq \overline{ \sum^n_{i=1} x_i  q \mathcal Q q.}
\end{equation} 
Since $\mathcal N\subseteq \mathcal M$ has finite index, by Proposition \ref{findex}, so is $q\mathcal N q\subseteq q\mathcal M q$ and hence the latter inclusion admits a Pimsner-Popa basis $y_1,\ldots , y_m\in q\mathcal M q$. This together with \ref{fingen2} show that $q\mathcal M q \subseteq \overline{ \sum^{m,n}_{j=1,i=1} (y_j x_i)  q \mathcal Q q}$ and hence 
 \begin{equation}\label{fingen1}
    L^2(q\mathcal M q)= \overline{ \sum^{m,n}_{j=1,i=1} (y_j x_i)  q \mathcal Q q}  .
\end{equation} 
Thus $L^2(q\mathcal M q)$ is a finitely generated right $q\mathcal Q q$-module. Using a Gram-Schmidt argument one can find finitely many $\xi_1, \ldots, \xi_s \in L^2(q\mathcal M q)$ $q\mathcal Q q$-orthogonal  such that  for any $y\in q\mathcal M q$ we have $y= \sum^s_{i=1} \xi_i \langle \xi_i,y\rangle$. In particular, we have $\|y\|^2_2=\sum_i \|\mathbb E_{q\mathcal Q q}(\xi^*_i y )\|^2_2$. Applying this identity for all $y\in \mathscr U (q\mathcal M q)$,  Popa's intertwining techniques further imply that \begin{equation}\label{int2}q\mathcal M  q\prec q\mathcal Q q.\end{equation}
Passing to the relative commutants intertwining in \eqref{int2} we get that $(q\mathcal Q q)'\cap q\mathcal M q\prec (q\mathcal M q)'\cap q\mathcal M q= \mathbb C q$. Since $\mathcal B\subset \mathcal M$ is a MASA one can check that 
$\mathcal  Q'\cap\mathcal M=\mathscr Z(\mathcal Q)$ and hence $\mathscr Z(\mathcal Q) q\prec  \mathbb C q$; in particular, every corner of $\mathscr Z(\mathcal Q)$ has a nontrivial atomic subcorner.  Thus one can find a countably family of orthogonal projections $(z_n)_n \subset \mathscr Z(\mathcal Q)$ satisfying $\mathscr Z(\mathcal Q) = \oplus_n \mathbb C z_n$, yielding the first assertion of the conclusion. This further yields that $\mathcal Q z_n \subseteq z_n\mathcal M z_n$ is an irreducible inclusion of II$_1$ factors. Also using the intertwining \eqref{int2} for $q=z_n$ we get, by Proposition \ref{intfinindex} that $\mathcal Q z_n \subseteq z_n\mathcal M z_n$ is finite index. \end{proof}

\section{Virtual isomorphisms between II$_1$ factors  associated with property (T) wreath-like product groups}

This section is mainly devoted to a result which describes the structure of all virtual $\ast$-isomorphisms between II$_1$ factors associated with property (T) groups. The groups involved will be the wreath-like product groups introduced in \cite{CIOS23a}. We start by recalling their definition.

\begin{defn}\label{wlp}
Let $A$, $B$ be arbitrary groups, $I$ an abstract set, $B\curvearrowright I$ a (left) action of $B$ on $I$. A group $G$ is a \emph{wreath-like product} of groups $A$ and $B$ corresponding to the action $B\curvearrowright I$ if $G$
is an extension of the form
\begin{equation}\label{ext}
1\rightarrow \bigoplus_{i\in I}A_i \hookrightarrow  G \stackrel{\kappa}\twoheadrightarrow B\rightarrow 1,
\end{equation}
where $A_i\cong A$ and the action of $G$ on $A^{(I)}=\bigoplus_{i\in I}A_i$ by conjugation satisfies the rule
$$gA_ig^{-1} = A_{\kappa(g)i}\;\;\; \text{ for all } i\in I.$$  
We call $A$ the \textit{base} of a wreath-like product $G\in\W\R(A,B\curvearrowright I)$. We also call the quotient group homomorphism  $G \stackrel{\kappa}\twoheadrightarrow B$ the \emph{canonical epimorphism}.

If the action $B\curvearrowright I$ is regular (i.e., free and transitive), we say that $G$ is a \emph{regular wreath-like product} of $A$ and $B$. The set of all wreath-like  products of groups $A$ and $B$ corresponding to an action $B\curvearrowright I$ (respectively, all regular wreath-like products) is denoted by $\WR(A, B\curvearrowright I)$ (respectively, $\WR(A,B)$). 
\end{defn}

To introduce our statement we need one more definition. Two II$_1$ factors $\mathcal M$ and $\mathcal N$ are called \emph{virtually isomorphic} if there is $t>0$ and a $\ast$-embedding  $\Theta:\mathcal M\rightarrow \mathcal N^t$ whose image $\Theta(\mathcal M )\subseteq \mathcal N^t$ has finite index; such  $\Theta$ is called a \emph{virtual $\ast$-isomorphism}.

The proof of this result closely follows that of \cite[Theorem 5.1]{CIOS24}, incorporating certain elements from \cite{CIOS23b}. We recommend that the reader consult the proof of \cite[Theorem 5.1]{CIOS24} and its corresponding preliminaries in advance. For completeness, however, we present all necessary details here, including repeating verbatim some of the arguments used in \cite[Theorem 5.1]{CIOS24}.

\begin{thm}\label{symmetries}
Let $A,C$ be non-trivial abelian groups. Let $B,D$ be nonparabolic ICC subgroups of finitely generated groups which are hyperbolic relative to a finite family of residually finite groups. 

\noindent Let $G\in\mathcal W\mathcal R(A,B \curvearrowright I)$ and $H\in\mathcal W\mathcal R(C,D\curvearrowright J)$ be property (T) groups, where $B \curvearrowright I$ and $D\curvearrowright J$ are faithful actions with infinite orbits.

\noindent Let $t>0$ be a scalar and let $\Theta:{\mathcal L}(G)\hookrightarrow {\mathcal L}(H)^t$ be any virtual $*$-isomorphism. 

\noindent Then $t\in\mathbb N$ and there are $t_1,\ldots, t_{m}\in\mathbb N$ with $t_1+\cdots+t_{m}=t$, for some ${m}\in\mathbb N$, a finite index subgroup $K<G$, an injective homomorphism $\gamma_i:K\rightarrow H$ with finite index image $\gamma_i(K)\leqslant H$, and a unitary representation $\rho_i:K\rightarrow\sU_{t_i}(\mathbb C)$, for every $1\leq i\leq {m}$, and a unitary $w\in \mathcal L(H)^t=\mathcal L(H)\overline{\otimes}\mathbb M_t(\mathbb C)$ such that 
$$\text{$w\Theta(u_g)w^*=\emph{diag}(v_{\gamma_1(g)}\otimes\rho_1(g),
\ldots, v_{\gamma_{m}(g)}\otimes\rho_{m}(g))
$, for every $g\in K$.}$$
\end{thm}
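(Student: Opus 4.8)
The plan is to follow the architecture of \cite[Theorem 5.1]{CIOS24}, upgrading it from honest $\ast$-isomorphisms to finite-index $\ast$-embeddings by interposing the finite-index toolkit developed in the preliminaries. Write $\M = \L(G)$, $\N = \L(H)^t$, and identify $\Theta(\M) \subseteq \N$ as a finite-index subfactor. Inside $\M$ sit the canonical "wreath-like" subalgebras: the abelian $\A_0 = \L(A^{(I)})$, which is a Cartan subalgebra of $\M = \A_0 \rtimes B$, and likewise inside $\L(H)$ we have the abelian $\mathcal C_0 = \L(C^{(J)})$ with $\L(H) = \mathcal C_0 \rtimes D$; after amplification one gets a corresponding Cartan-type picture in $\N$. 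The first step is to locate $\Theta(\A_0)$, or rather a suitable corner of it, inside $\N$. Because $G$ has property (T), so does $\M$, and $\Theta(\M)$ inherits a spectral-gap/rigidity property; combined with the relative hyperbolicity (and residual finiteness) hypotheses on $B$, $D$, which control the normalizer structure and rule out pathological intertwinings exactly as in \cite{CIOS23b, CIOS24}, one shows $\Theta(\A_0) \prec_{\N} \mathcal C_0 \otimes \mathbb M_t$, and in fact $\prec^{\rm s}$ after passing to the center of the relative commutant. This is where Propositions~\ref{essfinindex}, \ref{typeI}, \ref{finiteindexnorm} enter: $\A_0 \subseteq \A_0' \cap \N$ is shown to be finite index with $\A_0' \cap \N$ type I, and one extracts a central projection $z$ of trace close to $1$ on which $\mathscr N_{\N}(\mathcal B)'' z \subseteq z\N z$ is an irreducible finite-index inclusion of II$_1$ factors, for an appropriate MASA $\mathcal B$ containing $\A_0$.

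The second step is to transport the $B$-side: having essentially aligned the abelian parts, one intertwines the normalizers. Using $\mathscr N_{\M}(\A_0)'' = \M$ and the containment chain \eqref{cont1}, one pushes the intertwining of $\A_0$ forward to an intertwining of $\M$ itself into the cocycle crossed product $\mathcal C_0 \rtimes D'$ for finite-index $D' \leqslant D$ (here Proposition~\ref{infindexnonint} is the key rigidity: no finite-index subalgebra of a corner of $\N$ can intertwine into a crossed product by an \emph{infinite-index} subgroup, which forces $D'$ to be finite index). Chasing this through the wreath-like structure — the conjugation action of $G$ permutes the $A_i$ with infinite orbits, and similarly for $H$ — and using that $B$, $D$ are ICC and nonparabolic in their relatively hyperbolic ambient groups, one produces the group-level data: finite-index subgroups, an injective homomorphism $K \to H$ with finite-index image (a priori one gets a single $K$ by intersecting finitely many finite-index subgroups, then the "diagonal" decomposition reflects how the amplified Cartan breaks into $\mathcal C_0 z_n$ pieces), and the unitary cocycle representations $\rho_i$ absorbing the matrix-amplification discrepancy. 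Tracking traces and module dimensions through Proposition~\ref{findex}(7)-(8) forces $t \in \mathbb N$ and $t_1 + \cdots + t_m = t$.

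The final step is to assemble the conjugating unitary $w$: once $\Theta(u_g)$ is known, up to the Cartan-normalizer bookkeeping, to implement $g \mapsto v_{\gamma_i(g)}\otimes \rho_i(g)$ on each block $z_i$, a standard argument (as in the last paragraph of \cite[Theorem 5.1]{CIOS24}) produces a single unitary $w \in \L(H)^t$ with $w\Theta(u_g)w^* = \mathrm{diag}(v_{\gamma_1(g)}\otimes\rho_1(g),\ldots,v_{\gamma_m(g)}\otimes\rho_m(g))$ for all $g \in K$; the cocycle crossed-product relations \eqref{crossed} guarantee the $\rho_i$ are genuine unitary representations of $K$ (not merely projective), since the relevant $3$-cocycle obstruction vanishes as in Remark~\ref{group preimage}. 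I expect the main obstacle to be the first step — establishing $\Theta(\A_0) \prec^{\rm s}_{\N} \mathcal C_0 \otimes \mathbb M_t$ — because the finite-index image means one cannot simply quote the isomorphism case: the spectral-gap rigidity argument has to be run relative to $\Theta(\M)$ rather than all of $\N$, and then the conclusion has to be propagated from $\Theta(\M)$ out to $\N$ using the finite Pimsner-Popa basis and the type I structure of the relative commutant, which is precisely the content we isolated in Propositions~\ref{essfinindex}--\ref{finiteindexnorm}.
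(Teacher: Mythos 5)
Your overall architecture matches the paper's: the proof does follow \cite[Theorem 5.1]{CIOS24} verbatim in its second half, and the genuinely new content is exactly where you locate it — establishing $\Theta(\L(A^{(I)}))\prec^{\rm s}_{\mathscr N}\widetilde{\mathcal Q}$ when $\Theta$ is only a finite-index embedding, which the paper does in two stages (first into $\L(T)\otimes\mathbb M_{n_t}(\mathbb C)$ for the infinite-index kernel $T$ of the map to a hyperbolic quotient, then into the Cartan via the free-product structure of $\kappa_H(T)$), propagating the intertwining from $\Theta(\M)$ out to $\mathscr N$ with Propositions~\ref{infindexnonint}, \ref{essfinindex}, \ref{finiteindexnorm} precisely as you describe.

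However, there is a genuine gap in your second and third steps: you never invoke the machinery that actually converts the aligned Cartan picture into group homomorphisms. After conjugating $\Theta(\P)$ into $\widetilde{\mathcal Q}p$, the paper passes to the induced action $\beta$ of $B$ on $X_0$, verifies that $\beta(B)$-orbits sit inside $\widetilde\alpha(D\times\mathbb Z/n_t\mathbb Z)$-orbits and stay away from stabilizers and centralizers (Claim~\ref{awayfromstab/centr}), and then applies the orbit-equivalence superrigidity theorem \cite[Theorem 4.1]{CIOS23a} to produce the injective homomorphisms $\overline{\gamma}_i:B_i\to D$ and the orbit maps $\varphi_i$. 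This is also where $t\in\mathbb N$ comes from — each piece $X_i$ of the resulting partition is carried measure-isomorphically onto $X\times\{i\}$, so $\widetilde\mu(X_i)=1$ and $t=\sum_i\widetilde\mu(X_i)=n_0$ — not from trace/index bookkeeping via Proposition~\ref{findex} as you suggest. Likewise, your claim that the $\rho_i$ are genuine (non-projective) representations because ``the relevant $3$-cocycle obstruction vanishes as in Remark~\ref{group preimage}'' misattributes the mechanism: the untwisting of the $1$- and $2$-cocycles in Claims~\ref{nu} and~\ref{Ji} rests on $\sU_{\rm fin}$-cocycle superrigidity for the weakly mixing action built over $B_0\curvearrowright J$ (\cite[Theorem 3.5]{CIOS24}, \cite[Theorem 4.1]{CIOS24}), which uses property (T) of $K$ and the infinite-orbit hypothesis in an essential way. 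Without citing and verifying the hypotheses of these superrigidity results, the passage from ``the Cartan subalgebras are aligned'' to ``$\Theta(u_g)$ is diagonal of the form $v_{\gamma_i(g)}\otimes\rho_i(g)$'' is not justified.
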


\begin{proof} Let $\kappa_G:G\rightarrow B$ and $\kappa_H:H\rightarrow D$ be the canonical epimorphisms. For $b\in B$ and $d\in D$, fix $\widehat{b}\in G$ and $\widehat{d}\in H$ such that $\kappa_G(\widehat{b})=b$ and $\kappa_H(\widehat{d})=d$.
Denote by $\M=\mathcal L(G)$, $\P=\mathcal L(A^{(I)})$, $\mathcal N=\mathcal L(H)$ and $\Q=\mathcal L(C^{(J)})$. Let ${n_t}$ be the smallest integer such that ${n_t}\geq t$. Denote $\sN=\Nn\overline{\otimes}\mathbb M_{n_t}(\mathbb C)$ and $\widetilde{\mathcal Q}=\Q\overline{\otimes}\mathbb D_{n_t}(\mathbb C)$.
Let $p\in\widetilde{\mathcal Q}$ be a projection such that $(\tr\otimes\text{Tr})(p)=t$ and identify $\Nn^t=p\sN p$. For $1\leq i\leq {n_t}$, let $e_i=\textbf{1}_{\{i\}}\in\mathbb D_{n_t}(\mathbb C)$.
Let $$\sR:=\Theta(\P)'\cap p\sN p.$$

Since $H$ has property (T), so is $D$.
By applying \cite[Lemma 3.31]{CIOS23b} to $D$, we find a short exact sequence $$1\ra  S \ra D \overset{\kappa_D}{\ra} L\ra 1,$$ where $ S$ is either trivial or a nontrivial free product, $S= S_1\ast  S_2$ with $|S_1|\geq 2$ and $|S_2|\geq 3$, and $L$ is a non-elementary subgroup of a hyperbolic group. Let $\kappa=\kappa_D\circ\kappa_H:H\rightarrow L$ and denote by $T=\ker(\kappa)$. Then $\kappa_H(T)=\ker(\kappa_D)= S$, $C^{(J)}<T<H$, and $T<H$ has infinite index.

Next, we establish the following:
\begin{claim}\label{theta(P)'}
$\Theta(\P )\prec^{\rm s}_{\mathscr N}{\mathcal L}(T)\otimes \mathbb M_{n_t}(\mathbb C)$.
\end{claim}

\noindent \emph{Proof of Claim \ref{theta(P)'}.} Fix a projection $0\neq r\in \mathscr N_{p\mathscr N p}(\Theta (\mathcal P))' \cap p\mathscr N p$. Notice $\Theta(\mathcal P)r$ is abelian and hence amenable. Moreover, its normalizer satisfies    $\Theta(\mathcal M)r \subseteq \mathscr N_{p\mathscr N p }(\Theta (\P))'' r\subseteq r\mathscr N r$, and since $\Theta(\mathcal M)r \subseteq r\mathscr Nr $ has a finite index, it follows that $\mathscr N_{p\mathscr N p }(\Theta (\P))'' r\subseteq r\mathscr N r$ has finite index as well. In particular, as $\mathscr N$ has property (T), so is $\mathscr N_{p\mathscr N p }(\Theta (\P))'' r$ \cite{Pop86,CI18}. Furthermore, since $T<H$ has infinite index, it follows from Proposition \ref{infindexnonint}  $\mathscr N_{p\mathscr N p }(\Theta (\P))'' r\nprec_ {\mathscr N} \mathcal L(T)\otimes \mathbb M_{n_t}(\mathbb C)$. Altogether, these in combination with \cite[Theorem 6.4 (a)]{CIOS23b} yield that $\Theta(\P ) r\prec_{\mathscr N}{\mathcal L}(T)\otimes \mathbb M_{n_t}(\mathbb C)$. Since this holds for all $0\neq r\in \mathscr N_{p\mathscr N p}(\Theta (\mathcal P))' \cap p\mathscr N p$, using \cite[Lemma 2.4 (b)]{DHI19} we get the desired claim. $\hfill\blacksquare$


\vskip 0.08in

We continue with the following: 

\begin{claim}\label{theta(P)''}
$\Theta(\P )\prec^{\rm s}_{\mathscr N} \mathcal Q \otimes \mathbb D_{n_t}(\mathbb C)$.
\end{claim}

\noindent \emph{Proof of Claim \ref{theta(P)''}.} Fix a projection $0\neq r\in \mathscr N_{p\mathscr N p}(\Theta (\mathcal P))' \cap p\mathscr N p =\mathscr Z(\mathscr N_{p\mathscr N p}(\Theta (\mathcal P))'')$ . Note  $\Theta(\P )\subset \Theta (\mathcal M)$ is a Cartan subalgebra and $\Theta(\mathcal M) \subseteq p\mathscr N p$ has finite index. Consider a von Neumann subalgebra $\Theta(\P )\subseteq \mathscr B \subseteq \mathscr R$ such that $\mathscr B \subseteq p\mathscr N p$ is a MASA. Moreover, by Proposition \ref{finiteindexnorm} we have $\mathscr Z(\mathscr N_{p\mathscr N p}(\mathscr B)'')=\oplus_{n}\mathbb C z_{n}$ and
the inclusion $\mathscr N_{p\mathscr N p}(\mathscr B)''z_{n} \subseteq z_{n}\mathscr N  z_{n}$ has finite index for all $ {n}\in\mathbb N$. Since $\mathscr Z(\mathscr N_{p\mathscr N p}(\Theta (\mathcal P))'') \subseteq \mathscr B$ and $\Theta(\P )\subseteq \mathscr B$ is finite index, by Proposition \ref{findex}, so is $\Theta(\P )r z_{n}\subseteq \mathscr B r z_{n}$. Thus, Claim \ref{theta(P)'} implies that  $\mathscr B rz_{n} \prec_{\mathscr N} {\mathcal L}(T)\otimes \mathbb M_{n_t}(\mathbb C)$.  \cite[Proposition 3.6]{CIK15} gives nonzero projections $e\in \mathscr B r z_{n},q\in {\mathcal L}(T)\otimes \mathbb M_{n_t}(\mathbb C)$, a MASA $\R\subset q({\mathcal L}(T) \otimes \mathbb M_{n_t}(\mathbb C)) q$, a projection $q'\in \R'\cap q\mathscr N  q$ and $u\in\sU(\mathscr N)$ such that the inclusion $\sN_{q ({\mathcal L}(T)\otimes \mathbb M_{n_t}(\mathbb C) )q}(\R)''\subset q ({\mathcal L}(T)\otimes \mathbb M_{n_t}(\mathbb C))  q$ has finite index and $u\mathscr B eu^*=\R q'$. Since $\kappa_H(T)= S$ is a nontrivial free product and $\ker(\kappa_H)=C^{(J)}$,  \cite[Theorem 6.1 (c)]{CIOS23b} gives that $\R\prec_{{\mathcal L}(T)\otimes \mathbb M_{n_t}(\mathbb C)}^{\rm s}\Q\otimes \mathbb M_{n_t}(\mathbb C)$. Thus, $\R q'\prec_{\M}\Q \otimes \mathbb M_{n_t}(\mathbb C)$, which implies that $\Theta(\P )r \prec_{\mathscr N}\Q \otimes \mathbb M_{n_t}(\mathbb C) $. Since $\mathbb D_{n_t} (\mathbb C)\subseteq \mathbb M_{n_t}(\mathbb C)$ has finite index we further have $\Theta(\P )r \prec_{\mathscr N}\Q \otimes \mathbb D_{n_t}(\mathbb C) $. Finally, since this holds for all projections $0\neq r\in \mathscr N_{p\mathscr N p}(\Theta (\mathcal P))' \cap p\mathscr N p$, using \cite[Lemma 2.4 (b)]{DHI19} we get the desired claim. $\hfill\blacksquare$

\vskip 0.08in 
Since $\Theta(\mathcal M)\subseteq p\mathscr N p$ is a finite index inclusion of II$_1$ factors, Proposition \ref{finiteindexnorm} implies that $\Theta(\mathcal P)\subseteq \mathscr R$ admits a finite Pimsner-Popa basis and also $\mathscr R$ is a type I von Neumann algebra. Thus for every projection $0\neq r\in \mathscr R'\cap p\mathscr N p= \mathscr Z(\mathscr R)$ there is a subprojection $0\neq r_0\leqslant r$ we have that $\Theta (\mathcal P) r_0 \subseteq \mathscr R r_0$ has finite Pimsner-Popa basis and  hence $\mathscr R r_0 \prec \Theta (\mathcal P)$. Combining this with Claim \ref{theta(P)''} we get that $\sR r\prec_{\sN}\widetilde{\mathcal Q}$ and hence

\begin{equation}\label{theta(P)}
\sR\prec_{\sN}^{\text{s}}\widetilde{\mathcal Q}.
\end{equation}

Since $\widetilde{\mathcal Q}\subset \sN$ is a Cartan subalgebra, by combining \eqref{theta(P)} with \cite[Lemma 3.7]{CIOS23a} (see also \cite{Ioa11}), we get that after replacing $\Theta$ with $\text{ad}(w_0)\circ\Theta$, for some $w_0\in\sU(p\sN p)$, we may assume that 
\begin{equation}\label{theta(PP)}
\Theta(\P)\subset\widetilde{\mathcal Q} p\subset\sR.
\end{equation}

If $b\in B$, then $\Theta(u_{\widehat{b}})$ normalizes $\Theta(\P)$ and thus $\sR$. Denote $\beta_b'=\text{ad}(\Theta(u_{\widehat{b}}))\in\text{Aut}(\sR)$. Then $\beta'=(\beta_b')_{b\in B}$ defines an action of $B$ on $\sR$ which leaves $\Theta(\P)$ invariant.
The restriction of $\beta'$ to $\Theta(\P)$ is free since it is isomorphic to the conjugation action of $ B$ on $\mathcal{L}(A^{(I)})$, which is free by the hypothesis condition. Thus,
\cite[Lemma 3.8]{CIOS23a} yields an action $\beta=(\beta_b)_{b\in B}$ of $B$ on  $\sR$ satisfying 
\begin{enumerate}
\item for every $b\in B$ we have $\beta_b=\beta_b'\circ\text{ad}(\omega_b)=\text{ad}(\Theta(u_{\widehat{b}})\omega_b)$, for some $\omega_b\in\sU(\sR)$, and
\item $\widetilde{\mathcal Q} p$ is $\beta( B)$-invariant and the restriction of $\beta$ to $\widetilde{\mathcal Q} p$ is free.
\end{enumerate}

Our next goal is to apply \cite[Theorem 4.1]{CIOS23a}.
Consider the action $\alpha=(\alpha_d)_{d\in D}$ of $D$ on $\Q$ given by $\alpha_d=\Ad(v_{\widehat{d}})$ for $d\in D$.
Let $(\hat{X},\hat{\mu})$ be the dual of $C$ with its Haar measure. Let $(X,\mu)=(\hat{X}^{J},\hat{\mu}^{J})$ and $(\widetilde X,\widetilde\mu)=(X\times\mathbb Z/n_t\mathbb Z,\mu\times \mu_{n_t})$, where $\mu_{n_t}$ denotes the counting measure on $\mathbb Z/{n_t}\mathbb Z$.  Identify $\Q=\text{L}^{\infty}(X)$ and $\widetilde{\mathcal Q}=\text{L}^{\infty}(\widetilde X)$.
Denote still by $\alpha$ the corresponding measure preserving action $D\curvearrowright^{\alpha} (X,\mu)$ and let  $D\times\mathbb Z/n_t\mathbb Z\curvearrowright^{\widetilde\alpha}(\widetilde X,\widetilde\mu)$ be the action given by $(g,a)\cdot (x,m)=(g\cdot x,a+m)$. Let $X_0\subset\widetilde X$ be a measurable set such that $p=\textbf{ 1}_{X_0}$.
Since $\widetilde{\mathcal Q} p=\text{L}^{\infty}(X_0)$ is $\beta(B)$-invariant, we get a measure preserving action $B\curvearrowright^\beta (X_0,\widetilde\mu_{|X_0})$.

Note that $\widetilde{\Q}\subset \sN$ is a Cartan subalgebra and the p.m.p. equivalence relation associated to the inclusion $\widetilde{\Q}\subset \sN$ \cite{FM77}
is equal to $\sR(D\times\mathbb Z/n_t\mathbb Z\curvearrowright^{\widetilde\alpha}\widetilde X)$.
Since the restriction of $\beta$ to $\widetilde{\mathcal Q} p$ is implemented by unitaries in $p\sN p$, we deduce that
\begin{equation}
\text{$\beta( B)\cdot \mathfrak{m}\subset\widetilde\alpha(D\times\mathbb Z/n_t\mathbb  Z)\cdot \mathfrak{m}$, for almost every $\mathfrak{m}\in X_0$.}
\end{equation}

Since $B$ has property (T) and $\alpha$ is built over $D\curvearrowright J$ as a consequence of \cite[Lemma 3.4]{CIOS24}, applying Lemma \cite[Lemma 3.10]{CIOS24}, we find a partition $X_0=\sqcup_{i=1}^{n_0}X_i$ into non-null measurable sets, for some  ${n_0}\in\mathbb N\cup\{\infty\}$,  and a finite index subgroup $ B_i< B$ such that $X_i$ is $\beta( B_i)$-invariant and
 the restriction of $\beta_{| B_i}$ to $X_i$ is weakly mixing, for all $i$.

Next we will prove the following 

\begin{claim}\label{awayfromstab/centr}
There exists a sequence $(b_n)\subset B_i$  such that we have \begin{equation}\begin{split} &\lim_{n\ra \infty}\widetilde\mu(\{\mathfrak{m}\in X_0\mid \beta_{b_n}(\mathfrak{m})\in \widetilde\alpha(\widetilde{d}_1(\text{Stab}_D(j)\times\mathbb Z/n_t\mathbb Z)\widetilde{d}_2)(\mathfrak{m})\})=0\text{, and
}
\\ & \lim_{n\ra \infty}\widetilde\mu(\{\mathfrak{m}\in X_0\mid \beta_{b_n}(\mathfrak{m})\in \widetilde\alpha(\widetilde{d}_1(\text{C}_D(g)\times\mathbb Z/n_t\mathbb Z)\widetilde{d}_2)(\mathfrak{m})\})= 0,\end{split} \end{equation} 
for every $\widetilde{d}_1,\widetilde{d}_2\in D\times\mathbb Z/n_t\mathbb Z$.
\end{claim}

\noindent \emph{Proof of Claim \ref{awayfromstab/centr}.} Let $H_0=\kappa_H^{-1}(Stab_D(j))$ and $H_1=\kappa_H^{-1}(C_D(d))$. Since the action $B\ca I$ has infinite orbits and $B$ is an ICC nonparabolic subgroup of a relative hyperbolic group it follows that $H_0,H_1<H$ are infinite index subgroups.
Since $B_i \leqslant B$ is a finite index inclusion of groups and $\Theta(\mathcal M) \subset p\mathscr N p$ is a finite index inclusion of von Neumann algebras,  Proposition \ref{infindexnonint} implies the existence of a sequence $(g_n)\subset \kappa_G^{-1}(B_i)$ such that for every $x,y\in\mathscr N$ we have
\begin{equation}\label{weak}\text{$\lim_{n\ra \infty}\|\mathbb E_{\mathcal {L}(H_0)\,\otimes\,\mathbb M_{n_t}(\mathbb C)}(x\Theta(u_{g_n})y)\|_2= 0$ and $\lim_{n\ra \infty}\|\mathbb E_{\mathcal {L}(H_1)\,\otimes\,\mathbb M_{n_t}(\mathbb C)}(x\Theta(u_{g_n})y)\|_2= 0.$}\end{equation}

We will show that $b_n=\kappa_G(g_n)\in B_i$ satisfy the assertion of the claim.
Since $g_n^{-1}\widehat{b_n}\in A^{(I)}$ and $\omega_{b_n}\in\sU(\mathscr R)$, we get that $\Theta(u_{\widehat{b_n}})\omega_{b_n}\in\Theta(u_{g_n}) \sU(\mathscr R)$. Thus, one can find $x_1,\ldots ,  x_l$ such that  $\Theta(u_{\widehat{b_n}})\omega_{b_n}\in\sum_{i=1}^l\Theta(u_{g_n})(\widetilde{\mathcal Q} p)_1x_i$, for every $n\in\mathbb N$. As $\widetilde{\mathcal Q}$ is regular in $\sN$ and contained in $\mathcal{L}(H_0)\,\otimes\,\mathbb M_{n_t}(\mathbb C)$ and $\mathcal{L}(H_1)\,\otimes\,\mathbb M_{n_t}(\mathbb C)$,  \eqref{weak} implies that for every $x,y\in\mathscr N$
\begin{equation}\label{v_b_n}\text{$\lim_{n\ra\infty}\|\mathbb E_{\mathcal{L}(H_0)\,\overline{\otimes}\,\mathbb M_{n_t}(\mathbb C)}(x\Theta(u_{\widehat{b_n}})\omega_{b_n}y)\|_2= 0$ and $\lim_{n\ra\infty}\|\mathbb E_{\mathcal{L}(H_1)\,\otimes\,\mathbb M_{n_t}(\mathbb C)}(x\Theta(u_{\widehat{b_n}})\omega_{b_n}y)\|_2= 0$.}\end{equation}

On the other hand, we have that $\beta_{b_n}=\text{ad}(\Theta(u_{\widehat{b_n}})\omega_{b_n})$ and $\alpha_h=\text{ad}(v_{\widehat{h}})$, for every $h\in D$. Using these facts one can show that
$\widetilde\mu(\{\mathfrak{m}\in X_0\mid  \beta_{b_n}(\mathfrak{m})\in\widetilde{\alpha}(d_1 Stab_D(j)d_2\times\mathbb Z/n_t\mathbb Z)(\mathfrak{m})\})=\|\mathbb E_{\mathcal {L}(H_0)\,\otimes\,\mathbb M_{n_t}(\mathbb C)}((u_{\widehat{ d}_1}^*\otimes 1)\Theta(u_{\widehat{b_n}})\omega_{b_n}(u_{\widehat{d}_2}^*\otimes 1))\|_2^2$.
Thus, \eqref{v_b_n} proves the first assertion of Claim \ref{awayfromstab/centr}. The second assertion follows similarly. $\hfill\blacksquare$

\vskip 0.08in

Altogether, the prior relations show that all assumptions in \cite[Theorem 4.1]{CIOS23a} are satisfied and thus by the conclusion of this result we can find an injective group homomorphism $\overline{\gamma}_i: B_i\rightarrow D$ and $\varphi_i\in [\sR(D\times\mathbb Z/n_t\mathbb Z\curvearrowright^{\widetilde\alpha}\widetilde X)]$ such that $\varphi_i(X_i)=X\times\{i\}\equiv X$ and $\varphi_i\circ{\beta_b}_{|X_i}=\alpha_{\overline{\gamma}_i(b)}\circ{\varphi_i}_{|X_i}$, for all $b\in B_i$. In particular,  $\widetilde\mu(X_i)=1$. Thus,  $t=\widetilde\mu(X_0)=\sum_{i=1}^{n_0}\widetilde\mu(X_i)=n_0\in\mathbb N$. Since ${n_t}$ is the smallest integer with ${n_t}\geq t$, we get that
  ${n_t}=t=n_0$ and $p=1_{\mathscr N}$.

For $1\leq i\leq t$, let $p_i=\textbf {1}_{X_i}\in\widetilde\Q $ and $U_i\in\sN_{\sN}(\widetilde{\mathcal Q})$ such that $U_iyU_i^*=y\circ\varphi_i^{-1}$, for every $y\in\widetilde{\mathcal Q}$. Then $U_ip_iU_i^*=1\otimes e_i\in \mathcal{N}\otimes \mathbb{M}_t(\mathbb{C})$ where $e_i\in  \mathbb{M}_t(\mathbb{C})$ is the matrix that has $1$ for the $(i,i)$-entry and $0$ for the other entries. Since $\beta_b=\text{ad}(\Theta(u_{\widehat{b}})\omega_b)$ we find $(\zeta_{i,b})_{b\in B_i}\subset\sU({\Q})$ with
\begin{equation}\label{th}
\text{$U_i\Theta(u_{\widehat{b}})\omega_bp_iU_i^*=\zeta_{i,b}v_{\widehat{\overline{\gamma}_i(b)}}\otimes e_i$, for every $b\in B_i$.}
\end{equation}

Under the previous notations, we now establish  the following. 
\begin{claim}\label{finindex2} The subgroup $\overline{\gamma}_i(B_i)\leqslant D$ has finite index for all $1\leq i\leq t$.
    
\end{claim}

\noindent \emph{Proof of Claim \ref{finindex2}.} First, notice the relation \eqref{th} implies that $\mathcal{V}:=U_i\widetilde{\mathcal Q}p_iU_i^*\vee\{ U_i \Theta (u_{\hat b})\omega_b p_i U_i^*\,|\,b \in B_i\}''= \mathcal L(\kappa_H^{-1}(\overline{\gamma}_i (B_i))) \otimes e_i $. Observe that $\mathcal Q \otimes e_i=U_i\widetilde{\mathcal Q}p_iU_i^*\subseteq \mathcal V$ is a MASA and $U_i \Theta (u_{\hat b})\omega_b p_i U_i^*\in\mathscr{N}_{U_i\mathscr N p_iU_i^*}(U_i\widetilde{\mathcal Q}p_iU_i^*)$ for all $b\in B_i$.

From Proposition \ref{finiteindexnorm} there is a projection $z\in \mathscr Z(\mathscr R)\subseteq \widetilde Q$ with $p_i z\neq 0$ such that $\Theta (\P)p_iz \subseteq \mathscr R p_i z $ admits a finite Pimsner-Popa basis, $\xi_1,\ldots, \xi_{l_1}\in \mathscr R p_iz$. Now fix $x\in \P$ and $b\in B_i$. Then one can see that 
 \begin{equation*}\begin{split}&
   p_i z\Theta (u_{\hat b } x) p_iz=p_i z\Theta (u_{\hat b }) \Theta(x) p_iz=p_i z \Theta (u_{\hat b }) \omega_{\hat b} \omega_{\hat b}^* \Theta(x) p_iz\\&=z\Theta (u_{\hat b }) \omega_{\hat b} p_i \omega_{\hat b}^* \Theta(x) p_i z= z\Theta (u_{\hat b }) \omega_{\hat b} p_i\Theta(x) \omega_{\hat b}^* p_iz\\&= \sum_{j=1}^{l_1} z\Theta (u_{\hat b }) \omega_{\hat b} p_i\Theta(x) \mathbb E_{\Theta(\mathcal P)p_iz}(\omega_{\hat b}^*  p_iz\xi_j^*)\xi_j\in \sum^{l_1}_{j=1}z (U_i^* \mathcal V U_i) \xi_j.\end{split}\end{equation*}

\noindent Denoting by $\mathcal S :=\Theta (\mathcal L(\kappa_G^{-1}(B_i)))$ and using basic $\|\cdot\|_2$-approximations, the prior formulae further imply \begin{equation}\label{bimodule1} L^2(p_i z\mathcal S p_iz)\subseteq \overline{ \sum^{l_1}_{j=1}z(U_i^* \mathcal V U_i)\xi_j}.
\end{equation}

\noindent Since $B_i\leq B$ has finite index by Proposition \ref{findex} then so is $p_i z\mathcal S p_i z\subseteq p_i z\mathscr N p_i z$. Using Proposition \ref{findex} there is a Pimsner-Popa basis  
$\xi'_1,\ldots,\xi'_{l_2} \in p_i z\mathscr N p_iz$ for this inclusion. Then \eqref{bimodule1} further implies that  
 \begin{equation}L^2(p_i z \mathscr N p_i z)\subseteq \overline{ \sum^{l_1}_{j_1=1}\sum^{l_2}_{j_2=1}z(U_i^* \mathcal V U_i)\xi_{j_1}\xi'_{j_2}}.\end{equation}

Thus the $ z U^*_i \mathcal V U_i p_iz$-bimodule $L^2(p_iz \mathscr N p_iz)$ is finitely generated, of length $s$, as a left $z U^*_i \mathcal V U_i z$-module. Using \cite[Lemma 3.5]{FGS11} one can find a sequence of projections $(z_n)_n \subset (zU^*_i\mathcal V U_i z)'\cap zp_i \mathscr N z p_i$, SOT-convergent to $p_i z$  such that for every $n\in\mathbb N$ there exist $x_{1, n}, \ldots, x_{s,n}\in zp_i \mathscr N zp_i$ that are  $z U_i^*\mathcal V U_i z$-orthogonal and satisfy \begin{equation}\label{fgenbimodule}z_n y z_n = \sum_{j=1}^s \mathbb E_{ zU_i^*\mathcal V U_i z}( y x^*_{j,n})x_{j,n}\text{ for all }y\in p_i z\mathscr N p_i z.\end{equation} 

 \noindent Since $\|z_n-p_iz\|_2\ra 0$, for $n$ large enough, \eqref{fgenbimodule} implies that for every $y\in \mathscr U( p_i z\mathscr N p_i z)$ we have $0<\|p_i z\|^2/2=\sum^s_{j=1}\|\mathbb E_{ z U_i^* \mathcal V U_i z}(y x^*_{j,n})\|^2_2$. By Popa's intertwining techniques, this implies $ \mathscr N \prec_{\mathscr N}  z U_i^*\mathcal V U_i z$. Thus $\mathscr N \prec_{\mathscr N} \mathcal V= \mathcal L(\kappa_H^{-1}(\overline{\gamma}_i(B_i)))\otimes e_i$. Proposition \ref{infindexnonint} further entails that $\overline{\gamma}_i(B_i)\leq D$ has finite index.  $\hfill\blacksquare$
\vskip 0.08in
From here on our proof follows verbatim the proof of \cite[Theorem 5.1]{CIOS24}, and we include it here only for reader's convenience.

\vskip 0.08in
After replacing $ B_i$ by $ B_0=\cap_{i=1}^t B_i$ we may assume that $ B_i= B_0$, for all $1\leq i\leq t$.
We will prove the conclusion for $K=\kappa_G^{-1}( B_0)$. 
Let $\M_0=\mathcal{L}(K)$.
To prove the conclusion, it suffices to find a projection $p_0\in\Theta(\M_0)'\cap\sN$ with $(\tr\otimes\text{Tr})(p_0)=t_1\in\{1,\ldots,t\}$, homomorphisms $\gamma:K\rightarrow H$, $\rho:K\rightarrow\sU_{t_1}(\mathbb C)$, $\widetilde{U}\in\sU(\sN)$ such that $\gamma$ is injective, $\widetilde{U}p_0\widetilde{U}^*=1\otimes (\sum_{i=1}^{t_1}e_i)$ and $\widetilde{U}\Theta(u_g)p_0\widetilde{U}^*=v_{\gamma(g)}\otimes\rho(g)$, for all $g\in K$.
 Indeed, once we have this assertion, the conclusion will follow by a maximality argument. This concludes the first part of the proof.

The rest of the proof, which is divided between four claims (Claims \ref{P0}-\ref{Ji}), is devoted to proving the last assertion. We begin with the following:

\begin{claim}\label{P0}
There are $1\leq {t_1}\leq t$ and a homomorphism $\overline{\gamma}: B_0\rightarrow D$ such that, after renumbering, we have $p_0=\sum_{i=1}^{t_1}p_i\in\Theta(\M_0)'\cap \sN$ and can take $\overline{\gamma}_i=\overline{\gamma}$, for every $1\leq i\leq {t_1}$.
\end{claim}

\emph{Proof of Claim \ref{P0}.}
We start by showing that if $1\leq i,j\leq t$ and $p_i\sR p_j\not=\{0\}$, then $\overline{\gamma}_i$ and $\overline{\gamma}_j$ are conjugate. Let $x\in (\sR)_1$ with $p_ixp_j\not=0$. Then $x_b=\text{ad}(\Theta(u_{\widehat{b}})\omega_b)(p_ix)=\beta_b(p_ix)\in (\sR)_1$ and $(\Theta(u_{\widehat{b}})\omega_bp_i)xp_j=x_b(\Theta(u_{\widehat{b}})\omega_bp_j)$, for every $b\in B_0$. 
Using \eqref{th} we derive that \begin{equation}\label{zzeta}\text{$\big(U_i^*(\zeta_{i,b}v_{\widehat{\overline{\gamma}_i(b)}}\otimes e_i)U_i \big)xp_j=x_b \big(U_j^*(\zeta_{j,b}v_{\widehat{\overline{\gamma}_j(b)}}\otimes e_j)U_j\big)$, for every $b\in B_0$.}\end{equation}

For every subset $F\subset D$, let $P_{F}$ be the orthogonal projection from ${L}^2(\sN)$ onto the $\|\cdot\|_2$-closed linear span of $\{v_h\otimes x\mid h\in\kappa_H^{-1}(F),x\in\mathbb M_t(\mathbb C)\}$. Since $(\zeta_{i,b})_{b\in B_0}, (\zeta_{j,b})_{b\in B_0}\subset\sU(\Q)$ and $(x_b)_{b\in B_0}\subset (\sR)_1$, then using basic $\|\cdot\|_2$-approximations and also
 \eqref{theta(P)} in combination with \cite[Lemma 2.5]{Vae13} (for b)), we  can find finite $F\subset D$ so that for every $b\in B$ we have 
 \begin{equation*}\begin{split}& \text{a)} \qquad \|\big(U_i^*(\zeta_{i,b}v_{\widehat{\overline{\gamma}_i(b)}}\otimes e_i)U_i \big)xp_j-P_{F\overline{\gamma}_i(b)F}(\big(U_i^*(\zeta_{i,b}v_{\widehat{\overline{\gamma}_i(b)}}\otimes e_i)U_i \big)xp_j)\|_2<\frac{\|p_ixp_j\|_2}{2}, \; \text{ and  }\\ &
 \text{b)} \qquad \|x_b \big(U_j^*(\zeta_{j,b}v_{\widehat{\overline{\gamma}_j(b)}}\otimes e_j)U_j\big)-P_{F\overline{\gamma}_j(b)F}(x_b \big(U_j^*(\zeta_{j,b}v_{\widehat{\overline{\gamma}_j(b)}}\otimes e_j)U_j\big))\|_2<\frac{\|p_ixp_j\|_2}{2}.\end{split}\end{equation*}
 
 Combining a)-b) with \eqref{zzeta}, we derive that $F\overline{\gamma}_i(b)F\cap F\overline{\gamma}_j(b)F\not=\emptyset$, for every $b\in B_0$.
Since $\overline{\gamma}_i$ is injective and $ B_0< B$ has finite index, Claim \ref{finindex2} further implies that $\overline{\gamma}_i( B_0)\leq  D$ has finite index.
Since for any $d\in D\setminus\{1\}$, $\text{C}_{D}(d)< D$ has infinite index  then $\{\overline{\gamma}_i(b)d\overline{\gamma}_i(b)^{-1}\mid b\in B_0\}$ is infinite.  By \cite[Lemma 7.1]{BV14} we find $d\in D$ such that $\overline{\gamma}_i(b)=d\overline{\gamma}_j(b)d^{-1}$, for every $b\in B_0$. This proves our assertion that $\overline{\gamma}_i$ and $\overline{\gamma}_j$ are conjugate.

Let $\overline{\gamma}=\overline{\gamma}_1: B_0\rightarrow D$.
After renumbering, we may assume that $\overline{\gamma}_1,\ldots,\overline{\gamma}_{t_1}$ are conjugate to $\overline{\gamma}$ and $\overline{\gamma}_{{t_1}+1},\ldots,\overline{\gamma}_t$ are not conjugate to $\overline{\gamma}$, for some $1\leq {t_1}\leq t$. The previous paragraph implies that  $p_i\sR p_j=\{0\}$, for every $1\leq i\leq {t_1}$ and ${t_1}+1\leq j\leq t$. Thus, $p_0=\sum_{i=1}^{t_1}p_i$ belongs to the center of $\sR$. As $p_0$ commutes with $\Theta(u_{\widehat{b}})\omega_b$ and $\omega_b\in\sU(\sR)$, then $p_0$ commutes with $\Theta(u_{\widehat{b}})$, for every $b\in B_0$. 
Since $p_i\in \widetilde\Q p\subset \sR=\Theta(\P)'\cap\sN$, for every $1\leq i\leq t$, $p_0\in \Theta(\P)'\cap\sN$. Thus, $p_0$ also commutes with $\Theta(\P)$.
Since $\P$ and $(u_{\widehat{b}})_{b\in B_0}$ generate $\M_0$, we get that $p_0\in\Theta(\M_0)'\cap \sN$. Moreover, if $1\leq i\leq {t_1}$,  there is $d_i\in D$ such that $\overline{\gamma}_i(h)=d_i\overline{\gamma}(h)d_i^{-1}$, for every $h\in B_0$. After replacing $\varphi_i$ by $\alpha_{d_i^{-1}}\circ\varphi_i$ we may assume that $\overline{\gamma}_i=\overline{\gamma}$, for every $1\leq i\leq {t_1}$. $\hfill\blacksquare$

Let $U=\sum_{i=1}^{t_1}U_ip_i$ and $e=\sum_{i=1}^{t_1}e_i$. Then $U$ is a partial isometry, $UU^*=1\otimes e, U^*U=p_0$ and $U\widetilde{\mathcal Q} p_0U^*=\widetilde{\mathcal Q}(1\otimes e)$. If we let $\zeta_b=\sum_{i=1}^{t_1}\zeta_{i,b}\otimes e_i\in\sU(\widetilde{\mathcal Q}(1\otimes e))$, then \eqref{th} gives that
\begin{equation}\label{thetap_0}\text{$U\Theta(u_{\widehat{b}})\omega_bp_0U^*=\zeta_b(v_{\widehat{\overline{\gamma}(b)}}\otimes e)$, for every $b\in B_0$.}\end{equation}

Identify $\sN_1:=(1\otimes e)\sN(1\otimes e)$ and $\Q_1:=\widetilde{\mathcal Q}(1\otimes e)$ with $\Nn\overline{\otimes}\mathbb M_{t_1}(\mathbb C)$ and $\mathcal Q\overline{\otimes}\mathbb D_{t_1}(\mathbb C)$, respectively.
Consider the unital $*$-homomorphism $\Theta_1:\M_0\rightarrow \sN_1$ given by $\Theta_1(x)=U\Theta(x)p_0U^*$, for every $x\in \M_0$, and let $\sR_1=\Theta_1(\P)'\cap \sN_1$.  Letting $w_b=U\omega_bp_0U^*\in\sU(\sR_1)$, then \eqref{thetap_0} rewrites as
\begin{equation}\label{thetap}\text{$\Theta_1(u_{\widehat{b}})w_b=\zeta_b(v_{\widehat{\overline{\gamma}(b)}}\otimes 1)$, for every $b\in B_0$. }\end{equation}

By \eqref{theta(PP)}, $\Q_1\subset\sR_1$.
Since $(\Theta_1(u_{\widehat{b}})w_b)_{b\in B}$ normalizes $\sR_1$ and $(\zeta_b)_{b\in B_0}\subset\sU(\Q_1)$, \eqref{thetap} implies that $(v_{\widehat{\overline{\gamma}(b)}}\otimes 1)_{b\in B_0}$ normalizes $\sR_1$.  Thus,  $\eta_b=\zeta_b\text{ad}(v_{\widehat{\overline{\gamma}(b)}}\otimes 1)(w_b^*)\in\sU(\sR_1)$ and
\begin{equation}\label{etah}
\text{$\Theta_1(u_{\widehat{b}})=\eta_b(v_{\widehat{\overline{\gamma}(b)}}\otimes 1)$, for every $b\in B_0$.}
\end{equation}

\begin{claim}\label{T} $\sR_1={\mathcal Q}\overline{\otimes}\T$, for a von Neumann subalgebra $\T\subset\mathbb M_{t_1}(\mathbb C)$.
\end{claim}
\noindent\emph{Proof of Claim \ref{T}.}
First, we show that $\sR_1\subset {\mathcal Q}\overline{\otimes}\mathbb M_{t_1}(\mathbb C)$.
To this end, since $\overline{\gamma}(B_0)\leqslant D $ has finite index and $\text{C}_D(d)$ has infinite index in $D$ then for every $d\in D\setminus\{1\}$, there is a sequence $(b_n)\subset B_0$ such that for every $d_1,d_2\in D$ and $d'\in D\setminus\{1\}$ we have $d_1\text{ad}(\overline{\gamma}(b_n))(d')d_2\not=1$, for every $m$ large enough. 

Next we claim that $\|\mathbb{E}_{\Q}(x_1\text{ad}(v_{\widehat{\overline{\gamma}(b_n)}})(b)x_2)\|_2\rightarrow 0$, for every $x_1,x_2\in\Nn$ and $y\in\Nn\ominus\Q$. We only have to check this for $x_1=v_{h_1},x_2=v_{h_2},y=v_k$, where $h_1,h_2\in H,k\in H\setminus C^{(I)}$. In this case $\kappa_H(k)\not=1$, thus $\kappa_H(h_1\text{ad}(\widehat{\overline{\gamma}(b_n)})(k)h_2))=\kappa_H(h_1)\text{ad}(\overline{\gamma}(b_n))(\kappa_H(k))\kappa_H(h_2)\not=1$ and therefore $\mathbb{E}_{\Q}(x_1\text{ad}(v_{\widehat{\overline{\gamma}(b_n)}})(y)x_2)=0$, for every $m$ large enough. 

The previous paragraph further implies that \begin{equation}\label{conv'}\|\mathbb{E}_{\Q_1}(x_1\text{ad}(v_{\widehat{\overline{\gamma}(b_n)}}\otimes 1)(y)x_2)\|_2\rightarrow 0, \text{ for every } x_1,x_2\in\sN_1\text{ and } y\in\sN_1\ominus (\Q\overline{\otimes}\mathbb M_{t_1}(\mathbb C)).\end{equation}  Let $\sZ(\sR_1)$ be the center of $\sR_1$. Since $\Q_1\subset\sN_1$ is a MASA we have $\sZ(\sR_1) \subseteq \Q_1\subseteq \sR_1$. Moreover, since $\Theta_1(\mathcal M_0)\subseteq \mathscr N_1$ has finite index then so does $\Theta_1(\P)\subseteq \mathscr R_1$. Thus, using Proposition \ref{finiteindexnorm}   
 $ \mathcal Q_1 \subseteq \mathscr R_1$ admits a finite Pimser-Popa basis.  Combining this with \eqref{conv'}, basic approximations show that 

\begin{equation}\label{ortho}\text{$\|\mathbb{E}_{\sR_1}(\text{ad}(v_{\widehat{\overline{\gamma}(b_n)}}\otimes 1)(y))\|_2\rightarrow 0$, for every $y\in\sN_1\ominus (\Q\overline{\otimes}\mathbb M_{t_1}(\mathbb C))$.} \end{equation}

To see that $\sR_1\subset {\mathcal Q}\overline{\otimes}\mathbb M_{t_1}(\mathbb C)$, let $y_0\in \sR_1$. Put $y_1=\mathbb{E}_{ {\mathcal Q}\overline{\otimes}\mathbb M_{t_1}(\mathbb C)}(y_0)$ and $y_2=y_0-y_1$.
Since $v_{\widehat{\overline{\gamma}(b_n)}}\otimes 1$ normalizes $\sR_1$, $\text{ad}(v_{\widehat{\overline{\gamma}(b_n)}}\otimes 1)(y_0)\in\sR_1$ and thus $\|\mathbb{E}_{\sR_1}(\text{ad}(v_{\widehat{\overline{\gamma}(b_n)}}\otimes 1)(y_0))\|_2=\|y_0\|_2$, for every $m$. 
On the other hand, $\|\mathbb{E}_{\sR_1}(\text{ad}(v_{\widehat{\overline{\gamma}(b_n)}}\otimes 1)(y_2))\|_2\rightarrow 0$ by \eqref{ortho}. Thus, we get that $\|\mathbb{E}_{\sR_1}(\text{ad}(v_{\widehat{\overline{\gamma}(b_n)}}\otimes 1)(y_1))\|_2\rightarrow \|y_0\|_2$. Since $\|\mathbb{E}_{\sR_1}(\text{ad}(v_{\widehat{\overline{\gamma}(b_n)}}\otimes 1)(y_1))\|_2\leq \|y_1\|_2\leq \|y_0\|_2$, we conclude that $\|y_1\|_2=\|y_0\|_2$. Hence, $y_0=y_1\in {\mathcal Q}\overline{\otimes}\mathbb M_{t_1}(\mathbb C).$ This proves that  $\sR_1\subset {\mathcal Q}\overline{\otimes}\mathbb M_{t_1}(\mathbb C)$.

Therefore, we have that  $\Q_1={\mathcal Q}\overline{\otimes}\mathbb D_{t_1}(\mathbb C)\subseteq\sR_1\subseteq {\mathcal Q}\overline{\otimes}\mathbb M_{t_1}(\mathbb C).$
Since $\Q=\text{L}^{\infty}(X,\mu)$, we can disintegrate $\sR_1=\int_X^{\oplus}\T_\mathfrak{m}\;\text{d}\mu(\mathfrak{m})$, where $(\T_\mathfrak{m})_{\mathfrak{m}\in X}$ is a measurable field of von Neumann subalgebras of $\mathbb M_{t_1}(\mathbb C)$ containing $\mathbb D_{t_1}(\mathbb C)$.
We denote $$\text{$\widetilde{\alpha}_b=\text{ad}(v_{\widehat{\overline{\gamma}(b)}})\in\text{Aut}(\Q)$, for $b\in B_0$.}$$ Then $\widetilde{\alpha}=(\widetilde{\alpha}_b)_{b\in B_0}$ defines an action of $ B_0$ on $\Q$.  Since $(v_{\widehat{\overline{\gamma}(b)}}\otimes 1)_{b\in B_0}$ normalizes $\sR_1$, we have $\T_{\widetilde{\alpha}_b(\mathfrak{m})}=\T_\mathfrak{m}$, for every $b\in B_0$ and almost every $\mathfrak{m}\in X$.  \cite[Lemma 3.4]{CIOS23a} implies that $\widetilde{\alpha}$ is built over the action $B_0\curvearrowright J$ given by $b\cdot j=\overline{\gamma}(b)j$.
Since $\text{Stab}_D(j)<D$ has infinite index, $\overline{\gamma}$ is injective and $\overline{\gamma}(B_0)<D$ has finite index, the action $B_0\curvearrowright J$ has infinite orbits. This implies that $\widetilde{\alpha}$ is weakly mixing, hence ergodic. Thus, one can find a von Neumann subalgebra $\T\subset\mathbb M_{t_1}(\mathbb C)$ such that $\T_\mathfrak{m}=\T$, for almost every $\mathfrak{m}\in X$, which proves the claim.  $\hfill\blacksquare$
\vskip 0.08in
Next, let $\sU=\sU(\T)/\sU(\sZ(\T))$, where $\sZ(\T)$ is the center of $\T$, and $\mathfrak{q}:\sU(\T)\rightarrow\sU$ be the quotient homomorphism. We continue with the following claim:

\begin{claim}\label{nu}
There are maps $\overline{\rho}: B_0\rightarrow\sU(\T)$ and $\nu: B_0\rightarrow\sU(\sZ(\sR_1))$ such that the map $ B_0\ni b\mapsto \mathfrak{q}(\overline{\rho}_b)\in\sU$ is a homomorphism and after replacing  $\Theta_1$ by $\emph{ad}(\sigma)\circ\Theta_1$, for some $\sigma\in\sU(\sR_1)$, we have that
$\Theta_1(u_{\widehat{b}})=\nu_b(v_{\widehat{\overline{\gamma}(b)}}\otimes \overline{\rho}_b)$, for every $b\in B_0$.
\end{claim}

\noindent \emph{Proof of Claim \ref{nu}.}
Note that $(\text{ad}(\Theta_1(u_{\widehat{b}})))_{b\in B_0}$ and $(\widetilde{\alpha}_b\otimes\text{Id}_{\T})_{b\in B_0}$ define actions of $ B_0$ on $\sR_1=\Q \otimes \T$. Combining this observation with \eqref{etah} gives that
\begin{equation}\label{cocycle}
\text{$\eta_{b_1b_2}^*\eta_{b_1}(\widetilde{\alpha}_{b_1}\otimes\text{Id})(\eta_{b_2})\in\sZ(\sR_1)=\Q \otimes\sZ(\T)$, for every $b_1,b_2\in B_0$.}
\end{equation}

Viewing every $\eta\in\sU(\sR_1)$ as a measurable function $\eta:X\rightarrow\sU(\T)$, \eqref{cocycle} rewrites as $\eta_{b_1b_2}(\mathfrak{m})^*\eta_{b_1}(\mathfrak{m})\eta_{b_2}(\widetilde{\alpha}_{b_1}^{-1}\mathfrak{m})\in\sU(\sZ(\T))$, for every $b_1,b_2\in B_0$ and almost every $\mathfrak{m}\in X$. 
 Then $\psi_0 : B_0\times X\rightarrow\sU$ given by $\psi_0(b,\mathfrak{m})=\mathfrak{q}(\eta_b(\mathfrak{m}))$ is a 1-cocycle for $\widetilde{\alpha}$.
Since $\sU(\sZ(\T))$ is a closed central subgroup of the compact Polish group $\sU(\T)$, $\sU$ is a compact Polish group with respect to the quotient topology.
In particular, $\sU$ is a $\sU_{\text{fin}}$ group (see \cite[Lemma 2.7]{Pop06}). Since $B_0$ has property (T), $\widetilde{\alpha}$ is built over $B_0\curvearrowright J$ and $B_0\curvearrowright J$ has infinite orbits, \cite[Theorem 3.5]{CIOS24} (see also \cite[Theorem 3.6]{CIOS23a}) implies that
$\psi_0$ is cohomologous to a homomorphism $\psi: B_0\rightarrow\sU$. 

Let $\sigma:X\rightarrow\sU(\T)$ be a measurable map satisfying $\mathfrak{q}(\sigma(\mathfrak{m}))\psi_0(b,\mathfrak{m})\mathfrak{q}(\sigma(\widetilde{\alpha}_{b^{-1}}(\mathfrak{m})))^{-1}=\psi_b$, for every $b\in B_0$ and almost every $\mathfrak{m}\in X$.
 Let $\overline{\rho}: B_0\rightarrow\sU(\T)$ be  such that $\mathfrak{q}(\overline{\rho}_b)=\psi_b$. Thus, we find a measurable map $\nu_b:X\rightarrow\sU(\sZ(\T))$ such that $\sigma(\mathfrak{m})\eta_b(\mathfrak{m})\sigma(\widetilde{\alpha}_{b^{-1}}(\mathfrak{m}))^{-1}=\nu_b(\mathfrak{m})\overline{\rho}_b$, for every $b\in B_0$ and almost every $\mathfrak{m}\in X$.
Equivalently, $\sigma\in\sU(\sR_1)$ and $\nu: B_0\rightarrow\sU(\sZ(\sR_1))$ satisfy $\sigma\eta_b(\widetilde{\alpha}_b\otimes\text{Id})(\sigma)^*=\nu_b(1\otimes\overline{\rho}_b)$, for every $b\in B_0$. This implies the claim.$\hfill\blacksquare$ 
\vskip 0.08in 
We are now ready to finish the proof of the main assertion.
Let $g\in K=\kappa_G^{-1}( B_0)$  and put $b=\kappa_G(g)\in B_0$. Then $a=g\widehat{b}^{-1}\in A$ and $\Theta_1(u_a)\in \sZ(\sR_1)$. Denoting $V_g=\Theta_1(u_a)\nu_b$ and using Claim \ref{nu}, we get that $V_g\in\sU(\sZ(\sR_1))$ and
\begin{equation}\label{w_g}
\text{$\Theta_1(u_g)=V_g(v_{\widehat{\overline{\gamma}(\kappa_G(g))}}\otimes \overline{\rho}_{\kappa_G(g)})$, for every $g\in K$.}
\end{equation}

Our final claim is the following.

\begin{claim}\label{Ji}
There are maps $\lambda:K\rightarrow C^{(I)}$, $z:K\rightarrow\sZ(\T)$ and $w\in \sU(\sZ(\sR_1))$ such that
$V_g=(v_{\lambda_g}\otimes z_g)w^*(\widetilde{\alpha}_{\kappa_G(g)}\otimes\emph{Id})(w)$, for every $g\in K$.
\end{claim}

\noindent \emph{Proof of Claim \ref{Ji}.}
We define $\Lambda:K\times K\rightarrow C^{(I)}$  by $\Lambda_{g,h}=\widehat{\overline{\gamma}(\kappa_G(g))}\widehat{\overline{\gamma}(\kappa_G(h))}\widehat{\overline{\gamma}(\kappa_G(gh))}^*$ and  $Z:K\times K\rightarrow\sU(\sZ(\T))$ by $Z_{g,h}=\overline{\rho}_{\kappa_G(gh)}\overline{\rho}_{\kappa_G(h)}^*\overline{\rho}_{\kappa_G(g)}^*$.
Since $\Theta_1(u_{g})\Theta_1(u_{h})=\Theta_1(u_{gh})$, \eqref{w_g} gives
\begin{equation}\label{2c}
\text{$v_{\Lambda_{g,h}}\otimes 1=\big(V_{gh}V_g^*(\widetilde{\alpha}_{\kappa_G(g)}\otimes\text{Id})(V_h)^*\big)\big(1\otimes Z_{g,h}\big)$, for every $g,h\in K$.}
\end{equation}

Note that the map $K\times K\ni(g,h)\mapsto v_{\Lambda_{g,h}}\in \sU(\Q)$ is a $2$-cocycle for the action $K\curvearrowright^{\widetilde{\alpha}\circ\kappa_G}\Q$ and the map $K\times K\ni (g,h)\mapsto Z_{g,h}\in\sU(\sZ(\T))$ is a $2$-cocycle for the trivial action. Since $\widetilde{\alpha}$ is built over $B_0\curvearrowright J$, $\widetilde{\alpha}\circ\kappa_G$ is built over the action $K\curvearrowright J$ given by $k\cdot j=\kappa_G(k)\cdot j$, for every $k\in K$ and $j\in J$. Since the action $K\curvearrowright J$ has infinite orbits, $\widetilde{\alpha}\circ\kappa_G$ is weakly mixing.
If $n\in\mathbb N$, then the action $(\widetilde{\alpha}\circ\kappa_G)^{\otimes n}$ is built over the diagonal product action $K\curvearrowright I^n$, which has infinite orbits.
 Since $K$ has property (T), \cite[Theorem 3.5]{CIOS24} implies that $(\widetilde{\alpha}\circ\kappa_G)^{\otimes n}$ is $\sU_{\text{fin}}$-cocycle superrigid.
Altogether, we deduce that $\widetilde{\alpha}\circ\kappa_G$ satisfies the hypothesis of \cite[Theorem 4.1]{CIOS24}.

Let $\{f_1,\ldots,f_l\}$ be the minimal projections of $\sZ(\T)$. Let $1\leq i\leq l$.
Since $Z_{g,h}f_i\in\mathbb Tf_i$, for every $g,h\in K$, by using \eqref{2c} and applying \cite[Theorem 4.1]{CIOS24} there are maps $\lambda:K\rightarrow C^{(I)}$ and $c^i:K\rightarrow\mathbb T$ such that $v_{\Lambda_{g,h}}=v_{\lambda_{gh}}v_{\lambda_g}^*\widetilde{\alpha}_{\kappa_G(g)}(v_{\lambda_h})^*$ and $Z_{g,h}f_i=\overline{c_{gh}^i}{c_g^i}{c_h^i}f_i$, for all $g,h\in K$.
Define $c:K\rightarrow\sU(\sZ(\T))$ by letting $c_g=\sum_{i=1}^lc_g^if_i$, for every $g\in K$. Then $Z_{g,h}=c_{gh}^*c_gc_h$, for all $g,h\in K$.
Using \eqref{2c}, we get that the map $K\ni g\mapsto (v_{\lambda_g}^*\otimes c_g^*)V_g\in\sU(\sZ(\sR_1))$ is a $1$-cocycle for the action $K\curvearrowright^{\widetilde{\alpha}\circ\kappa_G\otimes\text{Id}}\sZ(\sR_1)=\Q\overline{\otimes}\sZ(\T)$. Since $\widetilde{\alpha}\circ\kappa_G$ is $\sU_{\text{fin}}$-cocycle superrigid, we deduce the existence of $w\in  \sU(\sZ(\sR_1))$ such that the claim holds.
$\hfill\blacksquare$
\vskip 0.08in 
Finally, \eqref{w_g} and Claim \ref{Ji} imply that $\Theta_1(u_g)=w^*(v_{\lambda_g\widehat{\overline{\gamma}(\kappa_G(g))}}\otimes c_g\overline{\rho}_{\kappa_G(g)})w$, for every
 $g\in K$. Then $\gamma:K\rightarrow H$ and $\rho:K\rightarrow\sU(\T)\subset\sU_{t_1}(\mathbb C)$ given by $\gamma(g)=\lambda_g\widehat{\overline{\gamma}(\kappa_G(g))}$ and $\rho(g)=c_g\overline{\rho}_{\kappa_G(g)}$ must be homomorphisms. 
Then $\Theta_1(u_g)=w^*(v_{\gamma(g)}\otimes\rho(g))w$, for every $g\in K$. By construction $A\subset K$ and $\gamma(A)\subset C^{(I)}$.
 Moreover, if $g\in\ker(\gamma)$, then we have that $\Theta_1(u_g)=w^*(1\otimes\rho(g))w\in w^*(1\otimes\mathbb M_{t_1}(\mathbb C))w$. This implies that $\ker(\gamma)$ must be finite. Since $G$ and thus $K$ are ICC it follows that $\gamma$ is injective. This finishes the proof of the main assertion.
\end{proof}

\vskip 0.1in
 Theorem \ref{symmetries} leads to a complete description of all virtual $*$-isomorphisms $\Theta:\L(G)\rightarrow\L(H)^t$. To explain this, we assume the setting of Theorem \ref{symmetries} and introduce some terminology from \cite[Section 2]{PV22}. 
 
 Let $K<G$ be a finite index subgroup. 
  If $\gamma:K\rightarrow H$  and $\rho:K\rightarrow\mathscr U_s(\mathbb C)$ are homomorphisms, for some $s\in\mathbb N$, we denote by $\pi_{\gamma,\rho}:K\rightarrow \sU(\mathcal{L}(H)\overline{\otimes}\mathbb M_s(\mathbb C))$ the homomorphism given by $\pi_{\gamma,\rho}(g)=v_{\gamma(g)}\otimes\rho(g)$ for  $g\in K$. If $\pi:K\rightarrow\sU(\mathcal S)$ is a homomorphism, where $\mathcal S$ is a tracial von Neumann algebra, we denote by
 $\text{Ind}_K^G(\pi):G\rightarrow\sU(\mathcal S\overline{\otimes}\mathbb M_{[G:K]}(\mathbb C))$ the induced homomorphism. Specifically, let $\chi:G/K\rightarrow G$ be a map such that $\chi(gK)\in gK$, for every $g\in G$, and define $c:G\times G/K\rightarrow K$ by letting $c(g,hK)=\chi(ghK)^{-1}g\chi(hK)\in K$, for every $g,h\in G$.
Identifying $\mathbb M_{[G:K]}(\mathbb C)=\mathbb B(\ell^2(G/K))$, we define $\text{Ind}_K^G(\pi)(g)(\xi\otimes\textbf{1}_{hK})=\pi(c(g,hK))\xi\otimes\textbf{1}_{ghK}$.
By \cite[Defintion 2.1]{PV22}, a homomorphism $G\rightarrow \sU(\mathcal{L}(H)^t)$ is called \textit{standard} if it is unitarily conjugate to a direct sum of homomorphisms of the form $\text{Ind}_K^G(\pi_{\gamma,\rho})$ induced from finite index subgroups $K<G$.
Theorem \ref{symmetries} implies that the restriction of any $*$-homomorphism $\Theta:\mathcal{L}(G)\rightarrow\mathcal{L}(H)^t$  to $G=\{u_g\}_{g\in G}$ is standard. 
\vskip 0.05in
We end this section with a result which is very similar in nature with \cite[Theorem 5.6]{CIOS24}. Also our proof largely follows the same argument verbatim, and we include it here only for reader's convenience.

\begin{thm}\label{explicit} Let $G,H$ be groups as in Theorem \ref{symmetries}.
Let $\Theta:\mathcal{L}(G)\rightarrow\mathcal{L}(H)^t$ be a virtual $*$-isomorphism, for some $t>0$. Then $t\in\mathbb N$ and we can find ${m}\in\mathbb N$ and for every $1\leq i\leq {m}$, a finite index subgroup $K_i\leqslant G$, an injective homomorphism $\gamma_i:K_i\rightarrow H$ with finite index image and a unitary representation $\rho_i:K_i\rightarrow\mathscr U_{s_i}(\mathbb C)$, for some $s_i\in\mathbb N$,  and a unitary $w\in\mathcal{L}(H)^t=\mathcal{L}(H)\overline{\otimes}\mathbb M_t(\mathbb C)$  such that $\sum_{i=1}^{m}[G:K_i]s_i=t$ and
$$\text{$w\Theta(u_g)w^*=\emph{diag}(\emph{Ind}_{K_1}^G(\pi_{\gamma_1,\rho_1})(g),\ldots,\emph{Ind}_{K_{m}}^G(\pi_{\gamma_{m},\rho_{m}})(g))$, for every $g\in G$.}$$

\noindent Moreover, we have the following formula for the index of the image of $\Theta$

$$
[\mathcal{L}(H)^t:\Theta(\mathcal{L}(G))]=t\sum_{i=1}^{m} s_i[H:\gamma_i(K_i)].
$$

\end{thm}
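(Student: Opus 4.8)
The plan is to upgrade Theorem~\ref{symmetries}, which controls $\Theta$ only on the subalgebra $\mathcal L(K)$ attached to a single finite index subgroup $K<G$, to a description of $\Theta$ on all of $\mathcal L(G)$, and then to read off the index formula by a Fell absorption computation; throughout, this follows \cite[Theorem~5.6]{CIOS24} very closely. First I would invoke Theorem~\ref{symmetries}: after conjugating $\Theta$ by a suitable unitary of $\mathcal L(H)^t$, there are a finite index subgroup $K<G$, injective homomorphisms $\gamma_i':K\to H$ with finite index image, unitary representations $\rho_i':K\to\mathscr U_{t_i}(\mathbb C)$ and naturals $t_1+\cdots+t_{m'}=t$ such that the restriction $\pi:=\Theta|_G$ of $\Theta$ to the canonical unitaries satisfies $\pi|_K=\bigoplus_i\pi_{\gamma_i',\rho_i'}$. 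As recorded in the discussion following Theorem~\ref{symmetries}, Frobenius reciprocity then gives a $G$-equivariant embedding of $\pi$ into $\mathrm{Ind}_K^G(\pi|_K)=\bigoplus_i\mathrm{Ind}_K^G(\pi_{\gamma_i',\rho_i'})$, which is standard in the sense of \cite[Definition~2.1]{PV22}; since a subrepresentation of a standard representation is again standard---here one uses the special form of the maps $\pi_{\gamma,\rho}$ and, crucially, that $\mathcal L(H)$ is a factor, so that the cohomological obstructions arising in the Clifford decomposition vanish---the representation $\pi$ is itself standard. Unwinding the definition, after a further unitary conjugation by some $w\in\mathcal L(H)^t$ this produces $m\in\mathbb N$, finite index subgroups $K_i<G$, and homomorphisms $\gamma_i:K_i\to H$, $\rho_i:K_i\to\mathscr U_{s_i}(\mathbb C)$ with $w\Theta(u_g)w^*=\mathrm{diag}\big(\mathrm{Ind}_{K_1}^G(\pi_{\gamma_1,\rho_1})(g),\dots,\mathrm{Ind}_{K_m}^G(\pi_{\gamma_m,\rho_m})(g)\big)$ for all $g\in G$. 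Since $\mathrm{Ind}_{K_i}^G(\pi_{\gamma_i,\rho_i})(g)$ is a unitary of $\mathcal L(H)\otimes\mathbb M_{s_i[G:K_i]}(\mathbb C)$ and these blocks sit diagonally inside $\mathcal L(H)\otimes\mathbb M_t(\mathbb C)=\mathcal L(H)^t$, we get $\sum_i s_i[G:K_i]=t$, so $t\in\mathbb N$; and a routine argument (as in \cite[Theorem~5.6]{CIOS24}) shows that each $\gamma_i$ is injective, using that $G$---hence every $K_i$---is ICC.

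For the index, write $\mathcal M=\mathcal L(H)^t$ and $\mathcal N=\Theta(\mathcal L(G))$, both $\mathrm{II}_1$ factors, and put $p_i=w^*(1_{\mathcal L(H)}\otimes e_i)w$ where $e_i\in\mathbb M_t(\mathbb C)$ is the identity of the $i$-th block. Then $(p_i)_{i=1}^m$ is a partition of unity in $\mathcal N'\cap\mathcal M$ with $\tr_{\mathcal M}(p_i)=s_i[G:K_i]/t$, $p_i\mathcal M p_i\cong\mathcal L(H)\otimes\mathbb M_{s_i[G:K_i]}(\mathbb C)$ and $\mathcal N p_i\cong\mathrm{Ind}_{K_i}^G(\pi_{\gamma_i,\rho_i})(G)''$, so Proposition~\ref{findex}(7) gives $[\mathcal M:\mathcal N]=\sum_i\tfrac{t}{s_i[G:K_i]}\,[\,\mathcal L(H)\otimes\mathbb M_{s_i[G:K_i]}(\mathbb C):\mathrm{Ind}_{K_i}^G(\pi_{\gamma_i,\rho_i})(G)''\,]$. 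It remains to evaluate each block index, which I would do by Fell absorption. Fixing $i$ and dropping subscripts, and writing $\lambda_\Gamma$ for the left regular representation of a group $\Gamma$: since $\mathcal N$ is a factor, $\mathcal N p_i\cong\mathcal L(G)$, hence $u_g\mapsto\mathrm{Ind}_K^G(\pi_{\gamma,\rho})(g)$ extends to a $\ast$-isomorphism of $\mathcal L(G)$ onto $\mathrm{Ind}_K^G(\pi_{\gamma,\rho})(G)''$, and the block index equals $\dim_{\mathcal L(G)}L^2\big(\mathcal L(H)\otimes\mathbb M_{s[G:K]}(\mathbb C)\big)$ for the corresponding left action. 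The left regular representation of $H$ on $L^2(\mathcal L(H))$ restricts along $\gamma$ to $(\lambda_K)^{\oplus[H:\gamma(K)]}$; tensoring in $\rho$ and the spectator tensor legs of $L^2(\mathbb M_s(\mathbb C))$ and $L^2(\mathbb M_{[G:K]}(\mathbb C))$, Fell absorption ($\lambda_K\otimes\sigma\cong(\lambda_K)^{\oplus\dim\sigma}$) together with $\mathrm{Ind}_K^G(\lambda_K)\cong\lambda_G$ turns this left action into $(\lambda_G)^{\oplus N}$ with $N=s^2[G:K][H:\gamma(K)]$---the square of $s$ reflecting the two tensor legs of $L^2(\mathbb M_s(\mathbb C))$, and the factor $[G:K]$ the adjoint leg of $L^2(\mathbb M_{[G:K]}(\mathbb C))$. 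Hence the block index is $N$, and substituting back, $[\mathcal M:\mathcal N]=\sum_i\tfrac{t}{s_i[G:K_i]}\,s_i^2[G:K_i][H:\gamma_i(K_i)]=t\sum_i s_i[H:\gamma_i(K_i)]$. As the left-hand side is finite by hypothesis, every $[H:\gamma_i(K_i)]$ is finite, which also completes the verification that each $\gamma_i$ has finite index image.

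The step I expect to be the main obstacle is the first one: converting the single-$K$ conclusion of Theorem~\ref{symmetries} into a genuinely $G$-equivariant standard form. This is essentially Clifford theory---grouping the constituents $\pi_{\gamma_i',\rho_i'}$ of $\pi|_K$ into $G$-orbits, passing to their stabilizers $K_i$ (which contain $K$ but are in general strictly larger), and confirming that the resulting extensions are literally of the form $\pi_{\gamma_i,\rho_i}$ with genuine, not merely projective, $\rho_i$ and with $\gamma_i$ injective of finite index image. As in \cite[Theorem~5.6]{CIOS24} this is carried out within the ``standard representation'' framework of \cite{PV22}, and I do not anticipate new analytic input beyond Theorem~\ref{symmetries}. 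The remaining steps---the partition-of-unity reduction via Proposition~\ref{findex}(7) and the Fell absorption count---are routine, the one point requiring care being the bookkeeping of the spectator tensor legs of $L^2(\mathbb M_n(\mathbb C))$, which is exactly what yields the factors $s_i^2$ in the block index.
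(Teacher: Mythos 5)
Your plan is correct and, for the structural half, follows the paper's route (which itself follows \cite[Theorem 5.6]{CIOS24} nearly verbatim): start from Theorem \ref{symmetries}, pass to a normal finite index $K\lhd G$, decompose $\Theta|_{\mathcal L(K)}$ into blocks, let $G$ permute the blocks, and realize $\Theta(u_g)$ as a direct sum of inductions from the stabilizers $K_i\supseteq K$. One caveat on the step you rightly flag as the crux: the reason no projective/cohomological obstruction appears is \emph{not} that $\mathcal L(H)$ is a factor (factoriality would only make the would-be $2$-cocycle scalar-valued). The paper's mechanism is Claim \ref{intertwiners}: because $H$ is ICC and the $\gamma$'s have finite index image, any nonzero intertwiner between $\pi_{\gamma,\rho}$ and $\pi_{\gamma',\rho'}$ is a scalar multiple of an elementary tensor $v_h\otimes U$; applying this to $\Theta(u_h)f_i$ for $h\in K_i$ and using multiplicativity of $\Theta$ directly yields honest homomorphisms $\gamma_i,\rho_i$ with no cocycle left to untwist. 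You would need to supply that argument (or import it from \cite{CIOS24}) rather than appeal to factoriality.

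For the index formula your computation is correct but packaged differently from the paper's. You apply the local index formula of Proposition \ref{findex}(7) once, to the partition $(p_i)\subset\mathcal N'\cap\mathcal M$, and evaluate each block index $[p_i\mathcal Mp_i:\mathcal Np_i]$ as a coupling constant via Fell absorption, obtaining $s_i^2[G:K_i][H:\gamma_i(K_i)]$ directly as $\dim_{\mathcal L(G)}L^2(\mathcal L(H)\otimes\mathbb M_{s_i[G:K_i]}(\mathbb C))$. The paper instead exhibits an explicit Pimsner--Popa basis $\{\sqrt{s_i}\,v_{h_l}\otimes e_{j_1,j_2}\}$ for $f_i\Theta(\mathcal L(K_i))f_i\subset f_i\mathcal L(H)^tf_i$, giving $s_i^2[H:\gamma_i(K_i)]$, then uses the local index formula and the multiplicativity $[\mathcal L(G):\mathcal L(K_i)]=[G:K_i]$ to climb from $\mathcal L(K_i)$ to $\mathcal L(G)$ before summing over blocks. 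The two routes produce identical block indices and the same final formula $t\sum_i s_i[H:\gamma_i(K_i)]$; yours is slightly more representation-theoretic (and requires checking that the identification $u_g\mapsto\mathrm{Ind}_{K_i}^G(\pi_{\gamma_i,\rho_i})(g)$ is trace-preserving, which it is since the diagonal blocks of $\mathrm{Ind}(\pi)(g)$ have zero trace unless $g=e$), while the paper's is more elementary and self-contained given Proposition \ref{findex}.
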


\begin{proof}
By Theorem \ref{symmetries},  there are $t_1,\ldots, t_{m'}\in\mathbb N$ with $t_1+\cdots+t_{m'}=t$, for some $m'\in\mathbb N$, a finite index subgroup $K\leqslant G$, for every $1\leq i\leq m'$, an injective homomorphism $\gamma_i':K\rightarrow H$ with a finite index image, and a unitary representation $\rho_i':K\rightarrow\sU_{t_i}(\mathbb C)$, and a unitary $w\in \mathcal{L}(H)^t=\mathcal{L}(H)\overline{\otimes}\mathbb M_t(\mathbb C)$ such that 
$$\text{$ w\Theta(u_g)w^*=\text{diag}(v_{\gamma_1'(g)}\otimes\rho_1'(g),
\ldots, v_{\gamma_{m'}'(g)}\otimes\rho_{m'}'(g))
$, for every $g\in K$.}$$

\noindent Passing to a finite index subgroup we can assume without any loss of generality that $K\lhd G$ is a normal subgroup.
After decomposing each $\rho_i'$, into a direct sum of irreducible representations, we may assume  $\rho_i'$ is irreducible, for every $1\leq i\leq {m'}$.
Further, we can find $m_1,\ldots,m_l,n_1,\ldots,n_l\in\mathbb N$, for some $l\in\mathbb N$, with $m_1n_1+\cdots+m_ln_l=t$, for every $1\leq i\leq l$, an injective homomorphism $\gamma_i^0:K\rightarrow H$ with a finite index image,  an irreducible representation $\rho_i^0:K\rightarrow\sU_{m_i}(\mathbb C)$, and a unitary $w_0\in \L(H)^t$ such that for every $1\leq i<j\leq l$,
$\gamma_i^0$ is not conjugate to $\gamma_j^0$ or $\rho_i^0$ is not unitarily conjugate to $\rho_j^0$,
and after replacing $\Theta$ by $\text{ad}(w_0)\circ\Theta$ we have
\begin{equation}\label{thetaonK}
\text{$\Theta(u_g)=\text{diag}(v_{\gamma_1^0(g)}\otimes\rho_1^0(g)\otimes I_{n_1},
\ldots, v_{\gamma_l^0(g)}\otimes\rho_l^0(g)\otimes I_{n_l})
$, for every $g\in K$.}
\end{equation}

\noindent Here, two homomorphisms $\gamma,\gamma':K\rightarrow H$ are conjugate if $\gamma={\rm ad}(h)\circ \gamma'$ for some $h\in H$.  We denote by $I_d\in\mathbb M_d(\mathbb C)$ the identity matrix, for every $d\in\mathbb N$, and consider the natural unital embedding $\bigoplus_{i=1}^l\big(\mathcal{L}(H)\otimes\mathbb M_{m_i}(\mathbb C)\otimes\mathbb M_{n_i}(\mathbb C))\subset \mathcal{L}(H)\otimes\mathbb M_t(\mathbb C).$ 

We continue with the following claim:

\begin{claim}\label{intertwiners}
Let $\gamma,\gamma':K\rightarrow H$ be injective homomorphisms with finite index images
and $\rho:K\rightarrow\sU_{a}(\mathbb C), \rho':K\rightarrow\sU_{a'}(\mathbb C)$ be irreducible representations, for some $a,a'\in\mathbb N$.
 Assume there  is $0\neq x\in \mathcal{L}(H)\otimes\mathbb M_{a,a'}(\mathbb C)$ such that   $(v_{\gamma(g)}\otimes\rho(g))x=x(v_{\gamma'(g)}\otimes\rho'(g))$, for every $g\in K$.
Then $a=a'$ and there exist $h\in H$ and $U\in\sU_a(\mathbb C)$ such that $\gamma(g)h=h\gamma'(g)$ and
$\rho(g)U=U\rho'(g)$, for every $g\in K$. Moreover, $x=c(v_h\otimes U)$ for some $c\in\mathbb C$.
\end{claim}

\noindent\emph{Proof of Claim \ref{intertwiners}}
Write $x=\sum_{h\in H}v_h\otimes x_h$, where $x_h\in\mathbb M_{a,a'}(\mathbb C)$ and $\sum_{h\in H}\|x_h\|_2^2=\|x\|_2^2<\infty$. Then 
\begin{equation}\label{equivariance}
    \text{$x_{\gamma(g)h\gamma'(g)^{-1}}=\rho(g)x_h\rho'(g)^*$, for every $g\in K, h\in H.$}
\end{equation}
  Let $\varepsilon>0$ such that $F=\{h\in H\mid \|x_h\|_2>\varepsilon\}$ is nonempty. Then \eqref{equivariance} implies that $F$ is a finite set such that $\gamma(g)F\gamma'(g)^{-1}=F$, for every $g\in K$.   Thus, $\gamma(g)FF^{-1}\gamma(g)^{-1}=FF^{-1}$, for every $g\in K$. Assume by contradiction $FF^{-1}\neq\{1\}$. Thus, as $F$ is finite there exist $1\neq h\in FF^{-1}$ and a finite index subgroup $G_0\leqslant G$ such that $\gamma (G_0)\leqslant C_H(h)$. As $\gamma (G)\leqslant H$ has finite index it follows that $C_H(h)\leqslant H$ has finite index. However, this contradicts that $H$ is ICC.
  Thus $FF^{-1}=\{1\}$ and hence $F$ consists of a single element. 
  Since this holds for every small enough $\varepsilon>0$, we conclude that $\{h\in H\mid x_h\not=0\}$ has a single element. Thus, $x=v_h\otimes x_h$, for some $h\in H$.
  Hence, we have that $v_{\gamma(g)h}\otimes\rho(g)x_h=v_{h\gamma'(g)}\otimes x_h\rho'(g)$, which implies that $\gamma(g)h=h\gamma'(g)$ and $\rho(g)x_h=x_h\rho'(g)$, for every $g\in K$.
  Since $\rho,\rho'$ are irreducible, the latter relation implies that $a=a'$ and $x_h=c U$, for some nonzero $c\in \mathbb C$ and $U\in\sU_a(\mathbb C)$. This proves our assertion. $\hfill\blacksquare$
\vskip 0.08in

Let $[l]=\{1,\cdots, l\}$. For every $i\in [l]$, let $f_i=1\otimes I_{m_i}\otimes I_{n_i}$.
Combining \eqref{thetaonK} with Claim \ref{intertwiners} we get that $\Theta(\mathcal{L}(K))'\cap \mathcal{L}(H)^t=\bigoplus_{i=1}^l(1\otimes I_{m_i}\otimes\mathbb M_{n_i}(\mathbb C))$, and thus $\sZ(\Theta(\mathcal{L}(K))'\cap \mathcal{L}(H)^t)=\bigoplus_{i=1}^l\mathbb Cf_i$.
Since $K<G$ is normal, $\Theta(u_g)$ normalizes $\Theta(\mathcal{L}(K))$ for all $g\in G$. 
Hence, there exists an action $G\curvearrowright [l]$ such that $\Theta(u_g)f_i\Theta(u_g)^*=f_{g\cdot i}$, for every $g\in G$ and $i\in [l]$. Let $J\subset [l]$ be a set which intersects every $G$-orbit exactly once. 

Next, fix $i\in J$ and denote $K_i=\{g\in G\mid g\cdot i=i\}$. 
Then \eqref{thetaonK} implies that $K<K_i$.
Let $h\in K_i$. If $g\in K$, then since $K<G$ is normal, $hgh^{-1}\in K,$ and \eqref{thetaonK} gives that \begin{equation}\label{conjug}\text{$\Theta(u_{hgh^{-1}})=\text{diag}(v_{\gamma_1^0(hgh^{-1})}\otimes\rho_1^0(hgh^{-1})\otimes I_{n_1},\ldots, v_{\gamma_l^0(hgh^{-1})}\otimes\rho_l^0(hgh^{-1})\otimes I_{n_l})$. }
\end{equation}

Since $\Theta(u_{hgh^{-1}})\Theta(u_h)=\Theta(u_h)\Theta(u_g)$, we get that $\Theta(u_{hgh^{-1}})(\Theta(u_h)f_i)=(\Theta(u_h)f_i)\Theta(u_g)$.
By combining \eqref{thetaonK} and \eqref{conjug} we conclude that for every $g\in K$ we have
\begin{equation}
\text{$(v_{\gamma_i^0(hgh^{-1})}\otimes\rho_i^0(hgh^{-1})\otimes I_{n_i})(\Theta(u_h)f_i)=(\Theta(u_h)f_i)(v_{\gamma_i^0(g)}\otimes\rho_i^0(g)\otimes I_{n_i})$.}
\end{equation}

\noindent By applying the moreover part of Claim \ref{intertwiners}, we get that $\Theta(u_h)f_i=v_{\gamma_i(h)}\otimes \rho_i(h)$, for some $\gamma_i(h)\in H$ and $\rho_i(h)\in\sU_{m_in_i}(\mathbb C)$.
Then $\gamma_i:K_i\rightarrow H$ and $\rho_i:K_i\rightarrow\sU_{m_in_i}(\mathbb C)$ must be homomorphisms such that $\gamma_i(g)=\gamma_i^0(g)$ and $\rho_i(g)=\rho_i^0(g)\otimes I_{n_i}$, for every $g\in K$, and $s_i=m_in_i$ satisfies $\sum_{i\in J}[G:K_i]s_i=t$. Thus, in the notation introduced before this proof, we have that $\Theta(u_h)f_i=\pi_{\gamma_i,\rho_i}(h)$, for every $h\in K_i$.

Let $e_i=\sum_{j\in G\cdot i}f_{j}\in \sZ(\Theta(\mathcal{L}(K))'\cap \mathcal{L}(H)^t)$. Then $e_i\in\Theta(\mathcal{L}(G))'\cap\mathcal{L}(H)^t$, hence $e_i\in \sZ(\Theta(\mathcal{L}(G))'\cap \mathcal{L}(H)^t)$. 
Since $e_i=\sum_{g\in G/K_i}\Theta(u_g)f_i\Theta(u_g)^*$ and the projections $\{\Theta(u_g)f_i\Theta(u_g)^*\mid g\in G/K_i\}$ are pairwise orthogonal, the homomorphism $G\ni g\mapsto \Theta(u_g)e_i\in\sU(e_i\mathcal{L}(H)^te_i)$ is unitarily conjugate to the induced homomorphism $\text{Ind}_{K_i}^G(\pi_{\gamma_i,\rho_i})$. 

Since $\Theta(u_g)=\sum_{i\in J}\Theta(u_g)e_i$, for every $g\in G$, the conclusion follows.

\vskip 0.1in

Now we prove the moreover part.  
Let $\{h_l\}_{l}$ be the coset representatives of $\gamma_i(K_i)\le H$. Then $\{\sqrt{s_i} v_{h_l}\otimes e_{j_1,j_2}\}_{j_1,j_2,l}$ is a Pimsner-Popa basis for $f_i\Theta(\mathcal{L}(K_i))f_i\subset f_i\mathcal{L}(H)^tf_i$.
Thus, 
$$
[f_i\mathcal{L}(H)^tf_i:f_i\Theta(\mathcal{L}(K_i))f_i]=s_i^2[H:\gamma_i(K_i)].
$$
Using the above and the local index formula, i.e.\ item 7.\ in Propositon \ref{findex}, we have $$
    [e_i\mathcal{L}(H)^te_i:e_i\Theta(\mathcal{L}(K_i))e_i]=[G:K_i]\frac{s_i^2[H:\gamma_i(K_i)]}{1/[G:K_i]}=s_i^2[G:K_i]^2[H:\gamma_i(K_i)].
    $$
    Since $[\mathcal{L}(G):\mathcal{L}(K_i)]=[G:K_i]$ we get
    $$
    [e_i\mathcal{L}(H)^te_i:e_i\Theta(\mathcal{L}(G))e_i]=\frac{[e_i\mathcal{L}(H)^te_i:e_i\Theta(\mathcal{L}(K_i))e_i]}{[G:K_i]}=s_i^2[G:K_i][H:\gamma_i(K_i)].
    $$
    Again, by Proposition \ref{findex} 7.\ and the above, we have
    \[
        [\mathcal{L}(H)^t:\Theta(\mathcal{L}(G))]=\sum_{i=1}^{m} \frac{[e_i\mathcal{L}(H)^te_i:e_i\Theta(\mathcal{L}(G))e_i]}{s_i[G:K_i]/t}=\sum_{i=1}^{m} \frac{s_i^2[G:K_i][H:\gamma_i (K_i)]}{s_i[G:K_i]/t}=t\sum_{i=1}^{m} s_i[H:\gamma_i(K_i)].\qedhere
    \]
\end{proof}

\section{Jones index set for II$_1$ factors associated with property (T) wreath-like product groups}
Let $\mathcal M$ be a II$_1$ factor. The \textit{Jones index set} $\mathscr{I}(\mathcal M)$ the collection of the Jones indices of all finite index subfactors of $\mathcal M$. In this section, we show that there are a continuum family of property (T) II$_1$ factors, whose Jones index set contains all the positive integers. To achieve this, we will use in an essential way the following result  \cite[Corollary 2.12]{CIOS23b}.
\begin{thm}[\cite{CIOS23b}]\label{out thm}
    Let $\Gamma$ be any countable group. Then one can find a continuum of ICC property (T) groups $(G_j)_{j\in J}$ such that the corresponding II$_1$ factors $(\mathcal L(G_j))_{j\in J}$ are pairwise non-stably isomorphic and satisfy $\Out(\mathcal L(G_j))\cong \Gamma$ and $\mathcal{F}(\mathcal L(G_j))=\{1\}$ for every $j\in J$.
\end{thm}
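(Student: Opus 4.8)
The plan is to realize the groups $G_j$ inside the class of property~(T) wreath-like product groups appearing in Theorem~\ref{symmetries}, and to read off the von~Neumann invariants from Theorem~\ref{explicit} together with the rigidity theorems of \cite{CIOS23a,CIOS23b}. The first step is a reduction to a purely group-theoretic statement. Suppose $G\in\mathcal W\mathcal R(A,B\curvearrowright I)$ is a property~(T) group satisfying the hypotheses of Theorem~\ref{symmetries} and is moreover \emph{perfect}, i.e. $G=[G,G]$, so that $\Char(G)=\mathrm{Hom}(G^{\mathrm{ab}},\mathbb T)$ is trivial. Combining Theorem~\ref{explicit}, applied with $H=G$, with the structural description of $*$-isomorphisms of wreath-like product factors in \cite[Theorem~7.5]{CIOS23b}, one obtains that every automorphism of $\mathcal L(G)$ is, modulo inner automorphisms, of the form $\Psi(\gamma)$ for a unique class $\gamma\,\Inn(G)\in\Out(G)$, where $\Psi$ is as in Proposition~\ref{cocycle construction}; hence $\overline{\Psi}:\Out(G)\to\Out(\mathcal L(G))$ is an isomorphism. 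Applying Theorem~\ref{explicit} to a putative isomorphism $\mathcal L(G)\cong\mathcal L(G)^t$, viewed as a virtual $*$-isomorphism of index~$1$, the identities $\sum_i[G:K_i]s_i=t$ and $1=t\sum_i s_i[G:\gamma_i(K_i)]$ force $t=1$, so $\mathcal F(\mathcal L(G))=\{1\}$; the same argument applied to $\mathcal L(G)\cong\mathcal L(G')^t$ with $G'$ another perfect group in the class forces $t=1$ and $G\cong G'$. Thus it suffices to produce a continuum of pairwise non-isomorphic perfect property~(T) wreath-like product groups $(G_j)_{j\in J}$, each as in Theorem~\ref{symmetries}, with $\Out(G_j)\cong\Gamma$.

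For this I would use the group-theoretic Dehn filling machinery of \cite{CIOS23a,CIOS23b}. Starting from a finitely generated group that is hyperbolic relative to a finite family of residually finite subgroups and that is equipped with an auxiliary structure on which the prescribed group $\Gamma$ acts faithfully by automorphisms, together with an action on a countable set having infinite orbits, one fills the peripheral structure deeply enough to obtain a quotient $G\in\mathcal W\mathcal R(\mathbb Z/p\mathbb Z,B\curvearrowright I)$ that is ICC, has property~(T), and inherits the structural hypotheses of Theorem~\ref{symmetries}. The input data is arranged so that: the twist of the conjugation action on the base coordinates kills $A^{(I)}$ in the abelianization while $B$ is perfect and ICC, whence $G$ is perfect; and $\Gamma$ descends to a subgroup of $\Out(G)$. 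The decisive point is that for cofinitely many fillings the rigidity of generic Dehn fillings established in \cite{CIOS23a,CIOS23b} guarantees that $G$ carries no automorphisms beyond those forced by the auxiliary structure, so that in fact $\Out(G)\cong\Gamma$.

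To obtain a continuum of pairwise non-isomorphic such groups I would let the Dehn filling data vary over an uncountable admissible set and pass to a subfamily on which a structural invariant of the peripheral/filling data, of the type used in \cite{CIOS23b}, separates isomorphism classes. Feeding this family $(G_j)_{j\in J}$ into the reduction above yields $\Out(\mathcal L(G_j))\cong\Gamma$ and $\mathcal F(\mathcal L(G_j))=\{1\}$ for every $j$, and $\mathcal L(G_j)\cong\mathcal L(G_{j'})^t$ for some $t>0$ implies $G_j\cong G_{j'}$; hence the II$_1$ factors $\mathcal L(G_j)$ are pairwise non-stably isomorphic.

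I expect the von~Neumann algebraic part to be routine once Theorem~\ref{explicit} is in hand; the main obstacle is the group theory, namely controlling, for a single generic Dehn filling, all of property~(T), the wreath-like product structure with abelian base, ICC-ness, perfectness, and the relative-hyperbolicity-with-residually-finite-peripherals hypothesis, while at the same time pinning down the \emph{exact} isomorphism $\Out(G)\cong\Gamma$ — i.e. showing that the filling annihilates every accidental automorphism yet preserves the prescribed ones — and ensuring that a continuum of these fillings remain mutually non-isomorphic.
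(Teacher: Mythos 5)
The statement you were asked to prove is not proved in this paper at all: it is imported verbatim from \cite{CIOS23b} (Corollary 2.12 there), and the paper only sketches, in a few lines, what that construction does (a continuum of ICC property (T) wreath-like product groups $G_j$ with $\Out(G_j)\cong\Gamma$, satisfying the hypotheses of Theorem \ref{symmetries}, for which $\overline{\Psi}^{(j)}:\Out(G_j)\to\Out(\L(G_j))$ is an isomorphism). Your von Neumann algebraic reduction is sound and in fact mirrors how the result is obtained: applying Theorem \ref{explicit} with $H=G$ to an isomorphism onto $\L(G)^t$, the index formula $1=t\sum_i s_i[H:\gamma_i(K_i)]$ together with $\sum_i[G:K_i]s_i=t$ does force $t=1$, $m=1$, $K_1=G$, $\gamma_1\in\Aut(G)$ and $\rho_1\in\Char(G)$, which gives $\mathcal F(\L(G))=\{1\}$, surjectivity of $\overline{\Psi}$ when $\Char(G)=1$, and $\L(G)\cong\L(G')^t\Rightarrow G\cong G'$; you should still record the injectivity of $\overline{\Psi}$ (if $\Psi(\gamma)$ is inner then $\gamma$ is inner, using that $G$ is ICC and perfect), but that is a standard argument. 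This part of your sketch is a legitimate, self-contained alternative to invoking \cite[Theorem 7.5]{CIOS23b} directly.

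The genuine gap is the group-theoretic core. Everything hinges on producing a continuum of pairwise non-isomorphic, perfect (so that $\Char(G_j)$ is trivial — note that in \cite{CIOS23b} the outer automorphism group of these factors is of the form $\Char(G)\rtimes\Out(G)$, so the characters must genuinely be killed), ICC, property (T) groups $G_j\in\WR(A_j,B\curvearrowright I)$ satisfying the exact hypotheses of Theorem \ref{symmetries} and with $\Out(G_j)\cong\Gamma$. You only gesture at the Dehn filling machinery, say the data ``is arranged so that'' perfectness, ICC-ness, property (T), the peripheral hypotheses and the exact computation of $\Out(G_j)$ all hold, and then explicitly concede that this is the main obstacle. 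But this is precisely the content of the cited result and by far its hardest part; the statement you reduce to is essentially the theorem itself minus the operator-algebraic layer, and none of it is verified (in particular, controlling \emph{all} automorphisms of a Dehn filling quotient, not just the prescribed ones, and separating a continuum of isomorphism classes are substantial arguments in \cite{CIOS23a,CIOS23b}). So as a blind proof the attempt is incomplete; within this paper the correct and intended move is simply to quote \cite[Corollary 2.12]{CIOS23b}, as the authors do, rather than to re-prove it.
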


We give a brief summary of the proof. First, they found a continuum family of ICC property (T) countable groups $(G_j)_{j\in J}$ with $\Out(G_j)\cong \Gamma$ for every $j\in J$. The groups $(G_j)_{j\in J}$ satisfy the conditions in Theorem \ref{symmetries}. i.e. $G_j$ are in $\WR(A_j,B\curvearrowright I)$ where $A_j$ are nontrivial abelian groups, $B$ is an nonparabolic ICC subgroup of a finitely generated group that are hyperbolic relative to a finite family of residually finite groups, and $B\curvearrowright I$ is a faithful action. Then they introduced a homomorphism $\Psi^{(j)}:\Aut(G_j)\rightarrow \Aut(\L(G_j))$ defined by 
\begin{equation}
\Psi^{(j)}(\varphi)(u_g)=u_{\varphi(g)}\label{isomorphism psi}
\end{equation}
for each $j\in J$. Next, they showed that for the above family of groups, $\Psi^{(j)}$ induces an isomorphism $\overline{\Psi}^{(j)}:\Out(G_j)\rightarrow \Out(\L(G_j))$ defined by
$$
\overline{\Psi}^{(j)}(\varphi\Inn(G_j))=\Psi^{(j)}(\varphi)\Inn(\L(G_j)).
$$
The isomorphisms $\overline{\Psi}^{(j)}$ are key ingredients for the construction of property (T) factors whose Jones index sets are exactly the positive integers.

\begin{thm}\label{Jones index set}
    There is a continuum  of ICC property (T) groups $(G_j)_{j\in J}$, such that their II$_1$ factors $\mathcal L(G_j)$ are pairwise not stably isomorphic, and $\mathscr{I}(\mathcal L(G_j))=\mathbb{N}$. Moreover, all integer indices are realized as those of the irreducible subfactors of $\mathcal L(G_j)$.
\end{thm}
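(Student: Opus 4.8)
The plan is to invoke Theorem~\ref{out thm} with a target group $\Gamma$ rich in finite subgroups, and then play the resulting identification $\Out(\mathcal L(G_j))\cong\Out(G_j)\cong\Gamma$ against two engines from the preliminaries: Proposition~\ref{cocycle construction}, which manufactures irreducible finite index subfactors from finite subgroups of $\Out(G_j)$, and Theorem~\ref{symmetries} (equivalently Theorem~\ref{explicit}), which constrains all finite index subfactors of $\mathcal L(G_j)$. Concretely, fix a countable group $\Gamma$ possessing a finite subgroup of order $n$ for every $n\in\mathbb N$ --- for instance $\Gamma=\mathbb Q/\mathbb Z$, which contains a unique cyclic subgroup of order $n$ for each $n$. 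Applying Theorem~\ref{out thm} to this $\Gamma$ produces a continuum $(G_j)_{j\in J}$ of ICC property~(T) groups whose factors $\mathcal L(G_j)$ are pairwise not stably isomorphic and for which the canonical homomorphism $\overline{\Psi}^{(j)}:\Out(G_j)\rightarrow\Out(\mathcal L(G_j))$ of Proposition~\ref{cocycle construction} is an isomorphism, with $\Out(G_j)\cong\Gamma$. As recorded in the summary following Theorem~\ref{out thm}, each $G_j$ lies in $\mathcal W\mathcal R(A_j,B\curvearrowright I)$ and satisfies all the standing hypotheses of Theorem~\ref{symmetries}; in particular, Theorem~\ref{symmetries} applies with $G=H=G_j$ and arbitrary $t>0$.

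For the inclusion $\mathscr{I}(\mathcal L(G_j))\subseteq\mathbb N$, let $\mathcal N\subseteq\mathcal L(G_j)$ be a subfactor with $[\mathcal L(G_j):\mathcal N]=t<\infty$. Iterating the Jones basic construction \cite{Jon83,PP86} twice yields a tower $\mathcal N\subseteq\mathcal L(G_j)\subseteq\mathcal M_1\subseteq\mathcal M_2$ of II$_1$ factors with $[\mathcal M_2:\mathcal M_1]=[\mathcal M_1:\mathcal L(G_j)]=t$, and, since the Jones projection $e\in\mathcal M_2$ for $\mathcal L(G_j)\subseteq\mathcal M_1$ satisfies $e\mathcal M_2e=\mathcal L(G_j)e$ and $\tr_{\mathcal M_2}(e)=1/t$, an isomorphism $\mathcal M_2\cong\mathcal L(G_j)^t$. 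This exhibits a $\ast$-embedding $\mathcal L(G_j)\hookrightarrow\mathcal L(G_j)^t$ whose image has index $t^2<\infty$, i.e.\ a virtual $\ast$-isomorphism. Theorem~\ref{symmetries} (with $G=H=G_j$) then forces $t\in\mathbb N$, proving $\mathscr{I}(\mathcal L(G_j))\subseteq\mathbb N$.

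For the reverse inclusion together with the irreducibility statement, fix $n\in\mathbb N$. By the choice of $\Gamma$ there is a finite subgroup $\Gamma_0\leqslant\Gamma\cong\Out(G_j)$ with $|\Gamma_0|=n$. Since $\overline{\Psi}^{(j)}$ is an isomorphism, its restriction $\overline{\Psi}^{(j)}|_{\Gamma_0}$ is injective, so Proposition~\ref{cocycle construction} applies: it produces an extension $1\rightarrow G_j\rightarrow H\rightarrow\Gamma_0\rightarrow 1$ for which $\mathcal L(G_j)\subseteq\mathcal L(H)$ is an irreducible subfactor of index $|\Gamma_0|=n$, and, via the downward basic construction, an irreducible subfactor $\mathcal N\subseteq\mathcal L(G_j)$ with $[\mathcal L(G_j):\mathcal N]=n$. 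Since $n=1$ is trivially realized as well, we conclude $\mathscr{I}(\mathcal L(G_j))=\mathbb N$ with every value attained by an irreducible subfactor, while pairwise non-stable isomorphism is part of the conclusion of Theorem~\ref{out thm}. The argument is in the end mostly assembly: the substantive inputs are Theorem~\ref{symmetries} for the upper bound and Theorem~\ref{out thm} together with Proposition~\ref{cocycle construction} for the realization, so the one point that demands genuine care is verifying that the groups delivered by Theorem~\ref{out thm} really do meet every hypothesis of Theorem~\ref{symmetries} --- precisely what the cited summary of \cite{CIOS23b} records --- all the analytic difficulty having already been absorbed into Theorem~\ref{symmetries}.
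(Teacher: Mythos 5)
Your proposal is correct and follows essentially the same route as the paper: Theorem~\ref{out thm} applied to a countable $\Gamma$ containing finite subgroups of every order (the paper uses $\bigoplus_{n}\mathbb Z/n\mathbb Z$ where you use $\mathbb Q/\mathbb Z$, an immaterial difference), the iterated basic construction plus Theorem~\ref{symmetries} for the upper bound $\mathscr I(\mathcal L(G_j))\subseteq\mathbb N$, and Proposition~\ref{cocycle construction} via the isomorphism $\overline{\Psi}^{(j)}$ for realizing every $n$ by an irreducible subfactor. No gaps.
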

\begin{proof}
    Let $\Gamma$ be a countable group such that for all positive integer $n$ there is a subgroup $\Gamma_0\le \Gamma$ with $|\Gamma_0|=n$. For instance, one can choose $\Gamma=\bigoplus_{n\in\mathbb{N}}\mathbb{Z}/n\mathbb{Z}$. By Theorem \ref{out thm}, we can take a continuum family of ICC property (T) groups $(G_j)_{j\in J}$ such that $(G_j)_{j\in J}$ satisfy the conditions in Theorem \ref{symmetries}, the II$_1$ factors $(\L(G_j))_{j\in J}$ are pairwise not stably isomorphic and $\Out(\L(G_j))\cong \Gamma$ for every $j\in J$. 

    First, we show that $\mathscr{I}(\L(G_j))\subset \mathbb{N}$. Set $\M=\L(G_j)$ and let $\mathcal{N}\subset \M$ be a finite index subfactor. Consider the iterated basic construction $\mathcal{N}\subset\M\subset \M_1\subset \M_2$. Then $\M_2\cong \M^{[\M:\mathcal{N}]}$ \cite[Proposition 1.5]{PP86} and this isomorphism induces an embedding $\M\hookrightarrow\M^{[\M:\mathcal{N}]}$. By Theorem \ref{symmetries}, we have $[\M:\mathcal{N}]\in\mathbb{N}$.
    
    Next, we show that $\mathbb{N}\subset \mathscr{I}(\L(G_j))$ for all $j\in J$. 
    For each $n\in\mathbb{N}$, choose a subgroup $\Gamma_0\le \Gamma$ with $|\Gamma_0|=n$. Recall that the homomorphism $\Psi^{(j)}$ of (\ref{isomorphism psi}) induces an isomorphism $\overline{\Psi}^{(j)}:\Out(G_j)\rightarrow\Out(\L(G_j))$. Therefore, applying Proposition \ref{cocycle construction} gives an irreducible subfactor $\mathcal N\subset \L(G_j)$ such that $[\L(G_j):\mathcal N]=n$. Hence, $n\in \mathscr{I}(\L(G_j))$.    
\end{proof}

\begin{appendices}

\section{The basic construction of ICC group subfactors}

In this section, we characterize the basic construction for the inclusion of von Neumann algebras of finite index ICC subgroups. This is well-known to experts, but we could not find a reference.
Similar constructions can be found in \cite{OK90, KY92,JS97}.
\subsection{Amplificational description of the basic construction}
The following proposition can be shown simply by iterating \cite[Proposition 1.5]{PP86}.
\begin{prop}[{\cite[Proposition 1.5]{PP86}}]\label{amplification}
    Let $\Nn\subset\M$ be an inclusion of $\rm{II}_1$ factors with $t=[\M:\Nn]<\infty$, $\{x_i\}_{i\in I}\subset \mathscr M$ be a Pimsner-Popa basis where $|I|\leq \lfloor t\rfloor +1$. Consider the Jones tower
    $$
    \Nn\subset \M\stackrel{e_1}{\subset} \M_1\stackrel{e_2}{\subset}\M_2\stackrel{e_3}{\subset}\cdots
    $$
    obtained by iterating the basic construction.
    Then 
    \begin{enumerate}
        \item The set $\{x_i^{(k)}\}_{i\in I}$ of elements defined by $x_i^{(k)}=t^{k/2}e_ke_{k-1}\cdots e_1x_i$ forms a Pimsner-Popa basis for the inclusion $\M_{k-1}\subset\M_k$.
    \end{enumerate}
    For each pair $\mathbf{i},\mathbf{j}\in I^k$ let $\Theta_{\mathbf{ij}}^{(k)}:\M_{2k}\rightarrow \M$ be defined by
    \begin{align*}
        \Theta_{\mathbf{ij}}^{(k)}(y)&= \mathbb{E}_{\M}\left(x_{i_1}^{(2)}\mathbb{E}_{\M_{2}}\left(x_{i_2}^{(4)}\cdots \mathbb{E}_{\M_{2k-2}}\left(x_{i_k}^{(2k)}y\left(x_{j_k}^{(2k)}\right)^*\right)\cdots\left(x_{j_2}^{(4)}\right)^*\right)\left(x_{j_1}^{(2)}\right)^*\right)
    \end{align*}
    and $\Theta^{(k)}:\M_{2k}\rightarrow \M^{t^k}$ by $\left(\Theta^{(k)}(y)\right)_{\mathbf{ij}}=\Theta^{(k)}_{\mathbf{ij}}(y)$.
    \begin{enumerate}
        \setcounter{enumi}{1}
        \item The map $\Theta^{(k)}$ is a $*$-isomorphism with $\Theta^{(k)}(\M_{2k-1})=\Nn^{t^k}$.
        \item For every $y\in\M$ we have 
        $$
        \Theta_{\mathbf{ij}}^{(k)}(y)= \mathbb{E}_{\Nn}\left(x_{i_1}\mathbb{E}_{\Nn}\left(x_{i_2}\cdots \mathbb{E}_{\Nn}\left(x_{i_k}y\left(x_{j_k}\right)^*\right)\cdots\left(x_{j_2}\right)^*\right)\left(x_{j_1}\right)^*\right).
        $$
    \end{enumerate}
\end{prop}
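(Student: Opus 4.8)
The proof is an induction on $k$ whose base case $k=1$ is \cite[Proposition 1.5]{PP86}. Throughout put $\M_{-1}:=\Nn$, $\M_0:=\M$, $x_i^{(0)}:=x_i$, and recall that for each $l\geq0$ the inclusion $\M_l\subset\M_{l+1}$ is the basic construction of $\M_{l-1}\subset\M_l$, so that $[\M_l:\M_{l-1}]=t$ and the Jones projections obey $e_l\,m\,e_l=\mathbb E_{\M_{l-2}}(m)\,e_l$ for $m\in\M_{l-1}$, $\mathbb E_{\M_{l-1}}(e_l)=t^{-1}$, $[e_l,\M_{l-2}]=0$, and $e_le_{l\pm1}e_l=t^{-1}e_l$.

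\emph{Item 1.} I induct on $k$: if $\{x_i^{(k-1)}\}_{i\in I}$ is a Pimsner--Popa basis for $\M_{k-2}\subset\M_{k-1}$, then since $\M_{k-1}\subset\M_k$ is its basic construction, with Jones projection $e_k$, \cite[Proposition 1.5]{PP86} shows that $\{t^{1/2}e_k x_i^{(k-1)}\}_{i\in I}$ is a Pimsner--Popa basis for $\M_{k-1}\subset\M_k$; and $t^{1/2}e_k x_i^{(k-1)}=t^{1/2}e_k\cdot t^{(k-1)/2}e_{k-1}\cdots e_1 x_i=x_i^{(k)}$, which is Item 1.

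\emph{Item 2.} Again by induction on $k$. Apply \cite[Proposition 1.5]{PP86} to the inclusion $\M_{2k-3}\subset\M_{2k-2}$ --- whose iterated basic construction is $\M_{2k-3}\subset\M_{2k-2}\subset\M_{2k-1}\subset\M_{2k}$, with Jones projections $e_{2k-1},e_{2k}$ --- and to the Pimsner--Popa basis $\{x_i^{(2k-2)}\}_{i\in I}$ furnished by Item 1; noting that the elements \cite[Proposition 1.5]{PP86} attaches to this basis are $t\,e_{2k}e_{2k-1}x_i^{(2k-2)}=x_i^{(2k)}$, we obtain a $\ast$-isomorphism $\Phi_k\colon\M_{2k}\to\M_{2k-2}^{\,t}$, realized as a corner of $\M_{2k-2}\otimes\mathbb M_{|I|}(\mathbb C)$, with $\Phi_k(\M_{2k-1})=\M_{2k-3}^{\,t}$, whose matrix coefficients are $(\Phi_k(y))_{ij}=\mathbb E_{\M_{2k-2}}\big(x_i^{(2k)}y(x_j^{(2k)})^*\big)$ for $y\in\M_{2k}$, while $(\Phi_k(y))_{ij}=\mathbb E_{\M_{2k-3}}\big(x_i^{(2k-2)}y(x_j^{(2k-2)})^*\big)$ for $y\in\M_{2k-2}$. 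I claim the map $\Theta^{(k)}$ of the statement equals $(\Theta^{(k-1)}\otimes\mathrm{id}_{\mathbb M_{|I|}(\mathbb C)})\circ\Phi_k$, where $\Theta^{(k-1)}\colon\M_{2(k-1)}=\M_{2k-2}\to\M^{\,t^{k-1}}$ is the isomorphism of the inductive step. Indeed, writing $\mathbf i=(\mathbf i',i_k)$ and $\mathbf j=(\mathbf j',j_k)$, the $(\mathbf i,\mathbf j)$-entry of $(\Theta^{(k-1)}\otimes\mathrm{id})\circ\Phi_k$ at $y$ is $\big(\Theta^{(k-1)}(z)\big)_{\mathbf i',\mathbf j'}$ with $z=(\Phi_k(y))_{i_kj_k}\in\M_{2(k-1)}$, and inserting this $z$ into the defining formula of $\Theta^{(k-1)}$ reproduces the nested formula for $\Theta^{(k)}_{\mathbf{ij}}(y)$ from the statement. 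Consequently $\Theta^{(k)}$ is a $\ast$-isomorphism $\M_{2k}\to\M^{\,t^k}$, and $\Theta^{(k)}(\M_{2k-1})=\big(\Theta^{(k-1)}(\M_{2k-3})\big)^{t}=\big(\Nn^{\,t^{k-1}}\big)^{t}=\Nn^{\,t^k}$, the middle equality being the inductive Item 2 applied to $\M_{2k-3}=\M_{2(k-1)-1}$. This proves Item 2.

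\emph{Item 3.} Since $\Theta^{(k)}=(\Theta^{(k-1)}\otimes\mathrm{id})\circ\Phi_k$, it suffices to check that for $y\in\M$ one has $(\Phi_k(y))_{i_kj_k}=\mathbb E_{\M_{2k-2}}\big(x_{i_k}^{(2k)}y(x_{j_k}^{(2k)})^*\big)=\mathbb E_{\Nn}(x_{i_k}yx_{j_k}^*)$; as the right-hand side lies in $\Nn\subset\M$, the inductive Item 3 for $\Theta^{(k-1)}$ then turns the nested $\mathbb E_{\M_{2l-2}}$-expression into the nested $\mathbb E_{\Nn}$-expression of the statement. For the identity, set $m:=x_{i_k}yx_{j_k}^*\in\M$, so that $x_{i_k}^{(2k)}y(x_{j_k}^{(2k)})^*=t^{2k}\,e_{2k}\cdots e_1\,m\,e_1\cdots e_{2k}$; now use $e_1me_1=\mathbb E_{\Nn}(m)e_1$, pull $\mathbb E_{\Nn}(m)\in\Nn$ past every $e_l$ $(l\geq1)$, collapse $e_{2k}\cdots e_2 e_1 e_2\cdots e_{2k}=t^{-(2k-1)}e_{2k}$, and apply $\mathbb E_{\M_{2k-2}}(e_{2k})=t^{-1}$ to get $t^{2k}\cdot t^{-(2k-1)}\cdot t^{-1}\,\mathbb E_{\Nn}(m)=\mathbb E_{\Nn}(m)$. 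The only mildly delicate points in the whole argument are keeping the amplifications consistent in the definition of $\Theta^{(k)}$ and this string-collapse; I anticipate no substantive obstacle beyond this bookkeeping.
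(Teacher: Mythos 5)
Your proof is correct and follows exactly the route the paper intends: the paper offers no argument beyond the remark that the proposition "can be shown simply by iterating \cite[Proposition 1.5]{PP86}," and your induction — applying that result to each inclusion $\M_{2k-3}\subset\M_{2k-2}$, identifying $\Theta^{(k)}$ with $(\Theta^{(k-1)}\otimes\mathrm{id})\circ\Phi_k$, and collapsing the string $e_{2k}\cdots e_1me_1\cdots e_{2k}$ for Item 3 — is precisely that iteration with the details supplied. No discrepancy with the paper's approach.
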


\subsection{Inclusions of von Neumann algebras of ICC groups}
Let $H\le G$ be an inclusion of ICC groups with $t=[G:H]<\infty$. In this subsection, we describe the structure of the inclusion $\L(H)\subset \L(G)$ of their von Neumann algebras. In Section \ref{ICC standard invariant}, we describe the standard invariant of $\L(H)\subset\L(G)$. In Section \ref{ICC cocycle description} we describe $\L(H)$ and $\L(G)$ as cocycle crossed product algebras. 

\subsubsection{The standard invariant}\label{ICC standard invariant}
We describe the standard invariant of $\L(H)\subset\L(G)$ in terms of the representations of subgroups of $G$. We mimicked the description given in \cite[Appendix A.4]{JS97} for the subfactors induced by the finite group actions.

Let $I=\{1,2,\cdots, t\}$ and $\{g_i\}_{i\in I}$ be a full set of the right coset representatives of $H$, i.e. $G=\coprod_{i=1}^t Hg_i$. For simplicity, we choose $g_1=1$.
Then $\{u_{g_i}\}_{i=1}^t$ is a Pimsner-Popa basis for $\L(H)\subset \L(G)$. Let $k\in\mathbb{N}$ and $\Theta_{ij}^{(k)}:\L(G)\rightarrow \L(H)$ be as in Proposition \ref{amplification}. For $\mathbf{i}\in I^k$ let $g_{\mathbf{i}}=g_{i_1}g_{i_2}\cdots g_{i_k}$.
Then for all $g\in G$
\begin{equation}
    \Theta_{\mathbf{ij}}^{(k)}(u_g)=\left\{\begin{array}{cc}
         u_{g_{\mathbf{i}}gg_{\mathbf{j}}^{-1}},& g_{i_l}g_{i_{l+1}}\cdots g_{i_k}g\in Hg_{j_l}g_{j_{l+1}}\cdots g_{j_k} \text{ for all }1\le l\le k,  \\
     0,&\text{otherwise} 
    \end{array}\right..\label{action on I^k}
\end{equation}
Note that for each $\mathbf{i}$ there is a unique $\mathbf{j}$ such that $\Theta_{\mathbf{ij}}^{(k)}(u_g)\ne 0$ and vice versa. Motivated by the above, we define the action $G\curvearrowright I^k$ by 
\begin{equation}
    g\cdot \mathbf{j} = \mathbf{i} \Leftrightarrow  g_{j_l}g_{j_{l+1}}\cdots g_{j_k}g^{-1}\in Hg_{i_l}g_{i_{l+1}}\cdots g_{i_k}\text{ for all }1\le l\le k.\label{gp action on index}
\end{equation}

We compute the higher relative commutants of $\L(H)\subset\L(G)$. Let $g_0\in G$ and $x=(x_{\mathbf{ij}})_{\mathbf{i},\mathbf{j}}\in \L(G)^{t^k}$. Then 
\begin{align}
    \left[x,\Theta^{(k)}(u_g)\right]=0
    &\ \Leftrightarrow\  x_{\mathbf{i},(g_0\cdot\mathbf{j})}\ u_{g_{(g_0\cdot\mathbf{j})}g_0g_{\mathbf{j}}^{-1}}
    =u_{g_{\mathbf{i}}g_0g_{\left(g_0^{-1}\cdot\mathbf{i}\right)}^{-1}} x_{\left(g_0^{-1}\cdot\mathbf{i}\right),\mathbf{j}}&\text{ for all } \mathbf{i},\mathbf{j}\in I^k\nonumber\\
    &\ \Leftrightarrow\  x_{(g_0\cdot\mathbf{i}),(g_0\cdot\mathbf{j})}\ u_{g_{(g_0\cdot\mathbf{j})}g_0g_{\mathbf{j}}^{-1}}
    =u_{g_{(g_0\cdot\mathbf{i})}g_0g_{\mathbf{i}}^{-1}} x_{\mathbf{i},\mathbf{j}}&\text{ for all } \mathbf{i},\mathbf{j}\in I^k\label{entry twist 1}
\end{align}
Let $K=\bigcap_{i\in I}g_i^{-1}Hg_i$. Then $K\lhd G$ and $[G:K]<\infty$. If $g_0\in K$ then $g_0\cdot \mathbf{i}=\mathbf{i}$ for all $\mathbf{i}\in I^k$ and the above condition becomes
\begin{align}
    &x_{\mathbf{i},\mathbf{j}}\ u_{g_{\mathbf{j}}g_0g_{\mathbf{j}}^{-1}}
    =u_{g_{\mathbf{i}}g_0g_{\mathbf{i}}^{-1}} x_{\mathbf{i},\mathbf{j}}&\text{ for all } \mathbf{i},\mathbf{j}\in I^k.\label{entry twist 2}
\end{align}
Suppose that $x\in \Theta(\L(K))'\cap \L(G)^{t^k}$. Let $x_{\mathbf{i},\mathbf{j}}=\sum_{g\in G}c_g u_g$, $\varepsilon>0$, and $F=\{g\in G: |c_g|>0\}$. Since $\sum_{g\in G}|c_g|^2=\|x_{\mathbf{i},\mathbf{j}}\|^2<\infty$, $F$ is finite. Moreover, from \eqref{entry twist 2}, we have that $F=(g_{\mathbf{i}}g_0g_{\mathbf{i}}^{-1})F(g_{\mathbf{j}}g_0g_{\mathbf{j}}^{-1})^{-1}$ for all $g_0\in K$. Thus, $FF^{-1}=(g_{\mathbf{i}}g_0g_{\mathbf{i}}^{-1})FF^{-1}(g_{\mathbf{i}}g_0g_{\mathbf{i}}^{-1})^{-1}$ for all $g_0\in K$. Since $G$ is ICC and $[G:K]<\infty$, we have $FF^{-1}=\{1\}$. Hence, $F$ is a singleton. Since this is true for all $\varepsilon>0$, we have $x_{\mathbf{i},\mathbf{j}}=c_{\mathbf{i},\mathbf{j}}u_g$ for some $g\in G$ and some $c_{\mathbf{i},\mathbf{j}}\in \mathbb{C}$. Suppose $c_{\mathbf{i},\mathbf{j}}\ne 0$. Plugging this back to \eqref{entry twist 2} yields $[g_{\mathbf{i}}^{-1}gg_{\mathbf{j}},g_0]=0$ for all $g_0\in K$. Again, since $G$ is ICC and $[G:K]<\infty$, $g_{\mathbf{i}}^{-1}gg_{\mathbf{j}}=1$. Therefore, 
\begin{equation}
    x_{\mathbf{i},\mathbf{j}}=c_{\mathbf{i},\mathbf{j}}u_{g_{\mathbf{i}}g_{\mathbf{j}}^{-1}}.\label{entry}
\end{equation}

Let $H\le G_0\le G$. We will describe the structures of the inclusions $\Theta^{(k)}(\L(G_0))\subset \L(G)^{t^k}$ and $\Theta^{(k)}(\L(G_0))\subset \L(H)^{t^k}$ by analyzing their relative commutants. Our main focus will be on the cases $G_0=G,H$.

First, assume that $x\in \Theta^{(k)}(\L(G_0))'\cap \L(G)^{t^k}$. Then by \eqref{entry twist 1}, we further have that 
\begin{align}
    c_{(g_0\cdot \mathbf{i}),(g_0\cdot \mathbf{j})}&=c_{\mathbf{i},\mathbf{j}}&\text{ for all } g_0\in G_0 \text{ and } \mathbf{i},\mathbf{j}\in I^k.\label{intertwining condition}
\end{align}
Thus, the minimal central projections of $\Theta^{(k)}(\L(G_0))'\cap \L(G)^{t^k}$ correspond to the irreducible representations of $G_0$ with nonzero multiplicity in the permutation representation of $G$ on $\mathbb{C}I^k$. The rank of the minimal central projections of $\Theta^{(k)}(\L(G_0))'\cap \L(G)^{t^k}$ is the multiplicity of the corresponding irreducible representation in $\mathbb{C}I^k$.

Next, we consider $\Theta^{(k)}(\L(G_0))'\cap \L(H)^{t^k}$. Notice that $\Theta^{(k)}(\L(G_0))'\cap \L(H)^{t^k}\subset \Theta^{(k)}(\L(K))'\cap \L(H)^{t^k}$. Suppose that $x\in \Theta^{(k)}(\L(K))'\cap \L(H)^{t^k}$. Then by \eqref{entry}, for all $\mathbf{i},\mathbf{j}\in I^k$ there exists $c_{\mathbf{i},\mathbf{j}}\in\mathbb{C}$ such that
\begin{align}
    x_{\mathbf{i},\mathbf{j}}&=\left\{\begin{array}{cc}
        c_{\mathbf{i},\mathbf{j}}u_{g_{\mathbf{i}}g_{\mathbf{j}}^{-1}},& g_{\mathbf{i}}g_{\mathbf{j}}^{-1}\in H, \\
     0,&\text{otherwise} 
    \end{array}\right.. \label{entry H}
\end{align}
Therefore,
$$
\Theta^{(k)}(\L(K))'\cap \L(H)^{t^k}= \bigoplus_{m\in I} \left\{x: x_{\mathbf{i},\mathbf{j}}=0 \text{ if } g_{\mathbf{i}}\notin Hg_m\text{ or } g_{\mathbf{j}}\notin Hg_m\right\}\cong \bigoplus_{m\in I}\mathbb{M}_{I^{k-1}}(\mathbb{C}).
$$
The algebra $\Theta^{(k)}(\L(G_0))'\cap \L(H)^{t^k}$ consists of elements satisfying \eqref{entry H} and \eqref{intertwining condition}. Let $J\subset I$ be a subset that contains exactly one element from each orbit of the action $G_0\curvearrowright I$. For each $i\in J$, let $K_i\le G_0$ be the stabilizer subgroup of $i$. Then 
$$
\Theta^{(k)}(\L(G_0))'\cap \L(H)^{t^k}\cong \bigoplus_{i\in J} (K_i'\cap M_{I^{k-1}}(\mathbb{C})).
$$
Thus, $\Theta^{(k)}(\L(G_0))'\cap \L(H)^{t^k}$ is isomorphic to the direct sum of endomorphism spaces of copies of $\mathbb{C}I^{k-1}$, each of which is viewed as a representation of $K_i$.
For $G_0=G$, we may choose $J=\{1\}$ and $K_1=H$. For $G_0=H$, the orbits of $G_0\curvearrowright I$ correspond to the double cosets of $H$, and $J$ can be chosen that $G=\coprod_{i\in J} Hg_i H$. Moreover, for each $i\in J$ $K_i=H\cap g_i^{-1}Hg_i$.

\begin{prop}
    Let $H\le G$ be an inclusion of ICC groups with $[G:H]<\infty$. Denote the set of isomorphism classes of the finite-dimensional irreducible representations of a group $G_0$ as $\widehat{G_0}$.
    Then the principal graphs $(\Delta, \Delta')$ of $\L(H)\subset \L(G)$ have the following description:
    \begin{enumerate}
        \item Let $\{g_i\}_{i=1}^q$ be a full set of the double coset representatives of $H$ and $K_i=H\cap g_i^{-1}Hg_i$. Let $\widetilde{\Delta}_0=\coprod_{i=1}^q \widehat{K}_i$ and $\widetilde{\Delta}_1= \widehat{H}$. Connect $\rho_0\in \widetilde{\Delta}_0$ and $\rho_1\in \widetilde{\Delta}_1$ with $m$ edges if the multiplicity of $\rho_0$ in $\rho_1$ is $m$. Then $\Delta$ is the connected component of the resulting graph $\widetilde{\Delta}$ that contains $1\in \widetilde{\Delta}_1$ with the designated vertex $1\in \widehat{H}\subset \widetilde{\Delta}_0$.
        \item Let $\widetilde{\Delta}_0'=\widehat{G}$ and $\widetilde{\Delta}_1'=\widehat{H}$. Connect $\rho_0\in \widetilde{\Delta}_0'$ and $\rho_1\in \widetilde{\Delta}_1'$ with $m$ edges if the multiplicity of $\rho_1$ in $\rho_0$ is $m$. Then $\widetilde{\Delta}$ is the connected component of $\widetilde{\Delta}'$ that has $1\in \widehat{G}=\widetilde{\Delta}_0'$ as the designated vertex. 
    \end{enumerate}
\end{prop}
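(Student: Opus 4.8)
The plan is to read both graphs off the Bratteli diagram of the tower of higher relative commutants that we have just computed. Put $N=\L(H)$, $M=\L(G)$, let $t=[G:H]$, and let $N=M_{-1}\subseteq M_0=M\subseteq M_1\subseteq M_2\subseteq\cdots$ be the Jones tower. Recall that, by definition, the principal graph $\Delta$ has as even vertices the simple summands of the algebras $N'\cap M_{2k-1}$ (each appearing once $k$ is large enough), as odd vertices the simple summands of the $N'\cap M_{2k}$, with an edge of multiplicity $m$ joining an even to an odd vertex exactly as in the Bratteli diagram of $N'\cap M_{2k-1}\subseteq N'\cap M_{2k}$, and with distinguished vertex $\ast$ the summand of $N'\cap M_{-1}=\mathbb C$ transported along these inclusions; $\Delta'$ is defined in the same way using the tower $M=M_0\subseteq M_1\subseteq\cdots$ and the commutant of $M$.

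First I would identify the standard invariant tower. By Proposition \ref{amplification}, $\Theta^{(k)}$ identifies $M_{2k}$ with $\L(G)^{t^k}$, carrying $M_{2k-1}$ onto $\L(H)^{t^k}$ and $M_{-1}=N$ onto the expectation--tower copy $\Theta^{(k)}(\L(H))$; since $M_{2k-1}\subseteq M_{2k}$ is a single basic construction, the same $\Theta^{(k)}$ gives
$$N'\cap M_{2k-1}\cong\Theta^{(k)}(\L(H))'\cap\L(H)^{t^k}\ \subseteq\ \Theta^{(k)}(\L(H))'\cap\L(G)^{t^k}\cong N'\cap M_{2k},$$
the inclusion being the one induced by $\L(H)^{t^k}\subseteq\L(G)^{t^k}$. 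The matrix--coefficient analysis above (built on \eqref{gp action on index}, \eqref{entry}, \eqref{entry H}, \eqref{intertwining condition}) then shows, for any intermediate group $H\leqslant G_0\leqslant G$, that $\Theta^{(k)}(\L(G_0))'\cap\L(G)^{t^k}$ is the algebra of $G_0$--equivariant matrices on $\mathbb C I^k$, i.e.\ $\End_{G_0}(\mathbb C I^k)$, and that $\Theta^{(k)}(\L(G_0))'\cap\L(H)^{t^k}$ is the corner of $G_0$--equivariant matrices supported on the fibres $\{\,\mathbf i:g_{\mathbf i}\in Hg_m\,\}$, i.e.\ $\bigoplus_{i\in J_{G_0}}\End_{K_i}(\mathbb C I^{k-1})$, where $J_{G_0}$ indexes the orbits of $G_0\curvearrowright I=H\backslash G$ and $K_i$ is the corresponding stabiliser. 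For $G_0=H$ the orbits are the double cosets $Hg_iH$ and $K_i=H\cap g_i^{-1}Hg_i$; for $G_0=G$ the action is transitive, so $J_G=\{1\}$ and $K_1=H$. Hence the simple summands of $N'\cap M_{2k-1}$ are labelled by pairs $(i,\rho)$ with $i\in J_H$, $\rho\in\widehat{K_i}$, those of $N'\cap M_{2k}$ by $\sigma\in\widehat H$, and (letting $k\to\infty$) the possible labels fill out $\coprod_i\widehat{K_i}=\widetilde\Delta_0$, respectively $\widehat H=\widetilde\Delta_1$ --- and on the $M$--side, $\widehat G=\widetilde\Delta_0'$ and $\widehat H=\widetilde\Delta_1'$.

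Next I would compute the connecting maps, hence the edges. Mackey's formula gives $\mathbb C I\cong\bigoplus_{i\in J_H}\Ind_{K_i}^H\mathbf 1$, so by the projection formula $\mathbb C I^k\cong\bigoplus_{i\in J_H}\Ind_{K_i}^H(\mathbb C I^{k-1}|_{K_i})$ as $H$--representations, and the inclusion $N'\cap M_{2k-1}\subseteq N'\cap M_{2k}$ matches the canonical map $\bigoplus_i\End_{K_i}(V_i)\hookrightarrow\End_H(\bigoplus_i\Ind_{K_i}^H V_i)$ with $V_i=\mathbb C I^{k-1}|_{K_i}$ (on the matrix--coefficient model it is just the inclusion of fibre--supported matrices into all matrices). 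By Frobenius reciprocity its Bratteli diagram joins $\rho\in\widehat{K_i}$ to $\sigma\in\widehat H$ with multiplicity $\dim\operatorname{Hom}_{K_i}(\rho,\Res_{K_i}\sigma)$, which is the edge rule of (1). Running the same argument with $\L(G)$ in place of the ``small'' $\L(H)$ (and using transitivity of $G\curvearrowright I$, so $K_1=H$), the step $M'\cap M_{2k+1}\subseteq M'\cap M_{2k+2}$ matches the induction map $\End_H(\mathbb C I^k)\to\End_G(\Ind_H^G\mathbb C I^k)=\End_G(\mathbb C I^{k+1})$, with Bratteli diagram joining $\pi\in\widehat G$ to $\sigma\in\widehat H$ with multiplicity $\dim\operatorname{Hom}_H(\sigma,\Res_H\pi)$; by the standard periodicity of the commutant tower, the step $M'\cap M_{2k}\subseteq M'\cap M_{2k+1}$ carries the same bipartite graph, which is the edge rule of (2).

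It remains to locate the distinguished vertices, which I expect to be the one genuinely delicate point. Since $N'\cap M_0\cong\End_H(\mathbb C I^0)=\mathbb C$ --- this is where $G$ being ICC is used, through \eqref{entry} --- the inclusion $\L(H)\subseteq\L(G)$ is irreducible, so the path through $N'\cap M_{-1}=N'\cap M_0=\mathbb C$ is unambiguous and its unique summand is $\mathbf 1\in\widehat H$. One step further, the distinguished even summand is the one carrying the iterated Jones projection $e_1e_3\cdots e_{2k-1}$; in the $\Theta^{(k)}$--model this projection lies in the $(g_1,g_1)$--block, i.e.\ in the summand indexed by the trivial double coset $g_1=1$ and the trivial representation $\mathbf 1_{K_1}=\mathbf 1_H\in\widehat{K_1}\subseteq\widetilde\Delta_0$ --- equivalently, $\ast$ corresponds to the trivial $\L(H)$--$\L(H)$ bimodule, which first occurs in $L^2(M_1)$ with multiplicity $\dim(N'\cap M_0)=1$ and so singles out that block. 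The identical reasoning on the $M$--side places the distinguished vertex of $\Delta'$ at $\mathbf 1\in\widehat G$. As a principal graph is connected, $\Delta$ (respectively $\Delta'$) is the connected component of this vertex in $\widetilde\Delta$ (respectively $\widetilde\Delta'$), which is the assertion. The hard part here is the bookkeeping of this last paragraph: checking that the non--canonical maps $\Theta^{(k)}$ are compatible along the chain of basic constructions and that the iterated Jones projection lies in the claimed block --- everything else is a mechanical application of Mackey's and Frobenius' theorems to the relative--commutant formulas already established.
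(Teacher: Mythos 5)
Your plan is correct and follows essentially the same route as the paper, which states this proposition as a direct consequence of the relative-commutant computations $\Theta^{(k)}(\L(G_0))'\cap\L(G)^{t^k}$ and $\Theta^{(k)}(\L(G_0))'\cap\L(H)^{t^k}$ carried out just before it (for $G_0=H$ and $G_0=G$) and leaves the Mackey--Frobenius bookkeeping and the location of the distinguished vertices implicit. Your write-up simply makes that intended argument explicit, and the points you flag as delicate (compatibility of the $\Theta^{(k)}$ along the tower, the block containing the iterated Jones projection) are exactly the ones the paper glosses over.
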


\subsubsection{Cocycle action description}\label{ICC cocycle description}
Recall that $K=\bigcap_{i=1}^t g_i^{-1}Hg_i\lhd G$ where $\{g_i\}_{i=1}^t$ is a full set of the right coset representatives of $H$. Let $\{\widetilde{g}_j\}_j$ be a full set of right coset representatives of $K\subset G$ that contains a full set of right coset representatives of $K\subset H$.
Define a map $\alpha:G/K\rightarrow \Aut(\L(K))$ and a  $2$-cocycle $\omega:G/K\times G/K \rightarrow \L(K)$
\begin{align*}
    \alpha_{K\widetilde{g}_j}=\Ad(u_{\widetilde{g}_j}),&
    &\omega_{K\widetilde{g}_i,K\widetilde{g}_j}=g
    &\text{ where } \widetilde{g}_i\widetilde{g}_j=g\widetilde{g}_k \text{ for some } \widetilde{g}_k \text{ and } g\in K.
\end{align*}
Then $\L(K)\rtimes_{\alpha,\omega}G/K\rightarrow \L(G)$ defined by 
$$
u_gv_{K\widetilde{g}_j}\mapsto u_{g\widetilde{g}_j} 
$$
is a $*$-isomorphism and the restriction of this map to $\L(K)\rtimes_{\alpha,\omega}H/K$ has the image $\L(H)$.
\end{appendices}


\begin{thebibliography}{prime}















		
 




\bibitem[AMAKCK25]{AMAKCK} J. F. Ariza Mejía, D. N. Amaraweera Kalutotage, I. Chifan, and K. Khan, Relative solidity results and their applications to rigidity and computations of II$_1$ factors invariants, Preprint 2024, arXiv:2509.19481, 45 pages.


\bibitem[Bis94]{Bis94}D. Bisch, A note on intermediate subfactors, \textit{Pacific J. Math.} {\bf 163} (1994), no. 2, 201--216.

\bibitem[Bis97]{Bis97} D. Bisch, Bimodules, higher relative commutants and the fusion algebra associated to a subfactor, in {\it Operator algebras and their applications (Waterloo, ON, 1994/1995)}, 13--63, Fields Inst. Commun., 13, Amer. Math. Soc., Providence, RI.

\bibitem[BV14]{BV14} M. Berbec and S. Vaes, W$^*-$superrigidity for group von Neumann algebras of left–right wreath products, \emph{Proc. Lond. Math. Soc.}, \textbf{108} (2014), no. 5, 1116--1152.


 
\bibitem[CD18]{CD18} I. Chifan and S. Das, A remark on the ultrapower algebra of the hyperfinite factor, \emph{Proc. Amer. Math. Soc.}, {\bf 146} (2018) 5289--5294. 
        
 \bibitem[CDHK24]{CDHK24} I. Chifan, S. Das, C. Houdayer, and K. Khan, Examples of property (T) $\rm II_1$ factors with trivial fundamental group, \emph{Amer. J. Math.}, { \bf 146 } (2024), no. 2, 435--465.

 \bibitem[CDK23]{CDK23} I. Chifan, S. Das, and K. Khan, Some Applications of Group Theoretic Rips Constructions to the Classification of von Neumann Algebras, \emph{Anal. PDE}, {\bf 16} (2023), no. 2, 433--476.

 
 
 




\bibitem[CFQT24]{CFQT24} I. Chifan, A. Fernandez Quero, and H. Tan, Rigidity results for group von Neumann algebras with diffuse center, Preprint 2024, arXiv:2403.01280.






 \bibitem[CI18]{CI18} I. Chifan and A. Ioana, Amalgamated free product rigidity for group von Neumann algebras,\textit{ Adv. Math.}, \textbf{329} (2018), 819--850.


 \bibitem[CIK15]{CIK15} I. Chifan, A. Ioana, and Y. Kida, $W^*$-superrigidity for arbitrary actions of central quotients of braid groups, \textit{ Math. Ann.} \textbf{361} (2015), no. 3-4, 563--582.

\bibitem[CIOS23a]{CIOS23a} I. Chifan, A. Ioana, D. Osin, and  B. Sun, Wreath-like products of groups and their von Neumann algebras $\rm I$: $W^{*}$-superrigidity, \emph{Ann. of Math.}, \textbf{198} (2023), no. 3, 1261--1303.

 \bibitem[CIOS23b]{CIOS23b} I. Chifan, A. Ioana, D. Osin, and B. Sun, Wreath-like product groups and their von Neumann algebras II: Outer automorphisms II, to appear in \emph{Duke Math. J.} arXiv:2304.07457.

 \bibitem[CIOS24]{CIOS24} I. Chifan, A. Ioana, D. Osin, and B. Sun, Wreath-like products of groups and their von Neumann algebras $ \rm III $: Embeddings, to appear in \emph{Comm. Math. Phys}, arXiv:2402.19461.





		



\bibitem[Chr79]{Chr79}  E. Christensen, Subalgebras of a finite algebra, \emph{Math. Ann.} {\bf 243} (1979), no. 1, 17–29. 



\bibitem[Con80]{Co80} A. Connes,  A type II$_1$ factor with countable fundamental group, {\it J. Operator Theory}
{\bf 4} (1980), 151--153.


\bibitem[Con82]{Con82} A. Connes,  Classification des facteurs. In Operator algebras and applications
(Kingston, Ont., 1980), Part 2,  Proc. Sympos. Pure Math. 38, Amer. Math. Soc.,
Providence, R.I., 1982, 43--109.


\bibitem[DGO17]{DGO17} F. Dahmani, V. Guirardel, and D. Osin, Hyperbolically embedded subgroups and rotating families in groups acting on hyperbolic spaces, \emph{American Mathematical Society}, \textbf{245} (2017), no. 1156.

\bibitem[DHI19]{DHI19} D. Drimbe, D. Hoff, and A. Ioana,  Prime $\rm II_1$ factors arising from irreducible lattices in products of rank one simple Lie groups, \emph{ J. Reine Angew. Math.}, \textbf{757} (2019), 197--246.

\bibitem[Dix81]{Dix81} J. Dixmier, Les algèbres d'opérateurs dans l'espace hilbertien (Algèbres de von Neumann). Gauthier-Villars, 1957 (later translated as von Neumann Algebras, North-Holland, 1981).
 
\bibitem[dlH95]{dlH95} P. de la Harpe, Operator algebras, free groups and other groups, \textit{Ast\'{e}risque}, tome {\bf 232} (1995), p. 121--153.






\bibitem[FGS11]{FGS11} J. Fang, S. Gao, and R. R. Smith, The Relative Weak Asymptotic Homomorphism Property for Inclusions of Finite von Neumann Algebras, \emph{Internat. J. Math.}, \textbf{22} (2011), no. 07, 991--1011.

\bibitem[FM77]{FM77} J. Feldman and C. C. Moore, Ergodic equivalence relations, cohomology, and von Neumann algebras, II,\textit{ Trans. Amer. Math. Soc.} {\bf 234} (1977), no. 2, 325-359.






\bibitem[GdhHJ96]{GdlHJ96} F.M. Goodman, P. de la Harpe, and V.F.R.\  Jones, Coxeter Graphs and Towers of Algebras. New York: Springer-Verlag. 1996 DOI: 10.1007/978-1-4612-4146-7.


\bibitem[Ioa11]{Ioa11} A. Ioana, $W^*$–superrigidity for Bernoulli actions of property (T) groups, \emph{J. Amer. Math. Soc.}, \textbf{24} (2011), no. 4, 1175-1226.

\bibitem[Ioa15]{Ioa15} A. Ioana, Cartan subalgebras of amalgamated free product II $ _1 $ factors, \emph{Ann. Sci. \'Ec. Norm. Sup\'er.}, \textbf{48} 
(2015), no. 1, 71-130. 

\bibitem[IPP08]{IPP08} A. Ioana, J. Peterson, and S. Popa, Amalgamated free products of weakly rigid factors and calculation of their symmetry groups, \emph{Acta Math.}, \textbf{200} (2008), 85--153.

\bibitem[IPV13]{IPV13} A. Ioana, S. Popa, and S. Vaes, A class of superrigid group von Neumann algebras, \textit{ Ann. of Math.} (2) {\bf 178} (2013), no. 1,  231-286.


 

\bibitem[Jon83]{Jon83} V. F. R. Jones, Index for subfactors, \emph{Invent. Math.} 72 (1983), 1–25.

\bibitem[Jon00]{Jon00} V. F. R. Jones, Ten problems. In: Mathematics: perspectives and frontieres (ed. by V. Arnold, M. Atiyah, P. Lax and B. Mazur), Amer. Math. Soc., Providence, RI, 2000, 79-91.

\bibitem[JS97]{JS97} V. F. R. Jones and V. S. Sunder. Introduction to Subfactors. London
Mathematical Society Lecture Note Series. Cambridge University Press, 1997.

\bibitem[KY92]{KY92} H. Kosaki and S. Yamagami, Irreducible bimodules associated with crossed product algebras, \emph{Internat. J. Math.} \textbf{3} (1992), no.~5, 661--676.


\bibitem[Mac63]{Mac63} S. Mac~Lane, Homology, 
Die Grundlehren der mathematischen Wissenschaften, Band 114, Springer, Berlin-Heidelberg, 1963.

\bibitem[MvN43]{MvN43} F. J. Murray and J. von Neumann, On rings of operators. IV, \emph{ Ann. of Math.}, {\bf 44} (1943), 716--808.

\bibitem[OK90]{OK90} A. Ocneanu (Lecture notes written by Y. Kawahigashi), Quantum symmetry, differential geometry of finite graphs, and classification of subfactors, Univ. of Tokyo Seminar Notes \textbf{45}, 1990. \url{https://www.ms.u-tokyo.ac.jp/~yasuyuki/ocneanu-tokyo.pdf}

 \bibitem[Osi10]{Osi10} D. Osin, Small cancellations over relatively hyperbolic groups and embedding theorems, \emph{Ann. of Math.}, \textbf{172} (2010), 1--39. 

 


 
 



\bibitem[Pop86]{Pop86} S. Popa, Correspondences, Preprint INCREST, 1986.




\bibitem[Pop95]{Pop95} S. Popa, Classification of subfactors and their endomorphisms, \emph{CBMS Regional Conference Series in
Mathematics}, vol. 86, Published for the Conference Board of the Mathematical Sciences, Washington, DC; by the American Mathematical Society, Providence, RI, 1995.

\bibitem[Pop99]{Pop99} S. Popa, Some properties of the symmetric enveloping algebra of a subfactor, with applications to amenability and property (T), \emph{Doc. Math.} {\bf 4} (1999), 665-744.

 
\bibitem[Pop06]{Pop06} S. Popa, Strong rigidity of $\rm II_1$ factors arising from malleable actions of w-rigid groups, \emph{Invent. Math.}, {\bf 165} (2006), no. 2, 369-408.


\bibitem[Pop07]{Pop07} S. Popa, Deformation and rigidity for group actions and von Neumann algebras, \emph{International Congress of Mathematicians}. Vol. I, 445-477, Eur. Math. Soc., Z¨urich, 2007.

\bibitem[PP86]{PP86} M. Pimsner and S. Popa,  Entropy and index for subfactors, {\it Ann. Sci. \'Ec. Norm. Sup\'er.}, {\bf 19} (1986), 57-106.

\bibitem[PP88]{PP88} M. Pimsner and S. Popa,  Iterating the basic construction, {\it Trans. Amer. Math. Soc.}, {\bf 310} (1988), 127-133.


\bibitem[PV08]{PV08}
S. Popa, and S. Vaes, Strong rigidity of generalized Bernoulli actions and computations of their symmetry groups, \textit{Adv. Math.} \textbf{217} (2008), no. 2, 833-872. 




\bibitem[PV22]{PV22} S. Popa and S. Vaes, $W^*-$Rigidity Paradigm for Embeddings of $\rm II_1$ Factors, \emph{Comm. Math. Phys.}, \textbf{395} (2022), no.2, 907-961.


\bibitem[Sun20]{Sun}
B. Sun, Cohomology of group theoretic Dehn fillings I: Cohen-Lyndon type theorems,
\emph{J. Algebra}, \textbf{542} (2020), 277-307.

\bibitem[Sut80A]{Sut80A} C.~E. Sutherland, Cohomology and extensions of von Neumann algebras, I, \emph{Publ. Res. Inst. Math. Sci.} {\textbf{16}} (1980), no.~1, 105--133.

\bibitem[Sut80B]{Sut80B} C.~E. Sutherland, Cohomology and extensions of von Neumann algebras, II, \emph{Publ. Res. Inst. Math. Sci.} {\textbf{16}} (1980), no.~1, 135--174.



\bibitem[Vae08]{Va08} S. Vaes, Explicit computations of all finite index bimodules for a family of $\rm II_1 $ factors, \emph{Ann. Sci. \'Ec. Norm. Sup\'er.}, \textbf{41} (2008), no. 5, 743-788.

\bibitem[Vae09]{Va09} S. Vaes, Factors of type II$_1$ without non-trivial finite index subfactors, \emph{Trans. Amer. Math.
Soc.}, {\bf 361} (2009), no. 5, 2587–2606.

\bibitem[Vae13]{Vae13} S. Vaes, One-cohomology and the uniqueness of the group measure space decomposition of a $\rm II_1$ factor, \emph{Math. Ann.}, \textbf{355} (2013), no. 2, 661-696.






\end{thebibliography}
\end{document}